\newcolumntype{?}{!{\vrule width 1pt}}
\newif\ifincludeprevious
\newtheorem{theorem}{Theorem}
\numberwithin{theorem}{section}
\newtheorem{proposition}[theorem]{Proposition}
\newtheorem{lemma}[theorem]{Lemma}
\newtheorem{corollary}[theorem]{Corollary}
\newtheorem{remark}[theorem]{Remark}
\newtheorem{example}[theorem]{Example}
\newtheorem{conjecture}[theorem]{Conjecture}
\newcommand{\PP}{\mathbb{P}}
\newcommand{\RR}{\mathbb{R}}
\newcommand{\QQ}{\mathbb{Q}}
\newcommand{\CC}{\mathbb{C}}
\newcommand{\ZZ}{\mathbb{Z}}
\newcommand{\NN}{\mathbb{N}}
\newcommand {\cP} {\mathcal {P}}
\newcommand{\fM} {\mathfrak{M}}
\newcommand{\cA} {\mathcal{A}}
\date{}
\title{\textbf{Moment Varieties of Measures on Polytopes}}
\author{Kathl\'en Kohn, Boris Shapiro and Bernd Sturmfels}
\begin{document}

\maketitle

 \begin{abstract}
\noindent The uniform probability measure on a convex polytope
induces piecewise polynomial densities on its projections. For a fixed combinatorial type of simplicial polytopes,
the moments of these measures are rational functions in the vertex coordinates.
We study projective varieties that are parametrized by finite collections of
such rational functions. Our focus lies on determining the prime ideals
of these moment varieties. Special cases include  Hankel 
determinantal ideals for polytopal splines on line segments,
and the relations among multisymmetric functions given by
the cumulants of a simplex. In general, our moment varieties
are more complicated than in these two special cases. They offer 
challenges for both numerical and symbolic computing
in algebraic geometry.
  \end{abstract}

\section{Introduction}

Inverse moment problems for positive and real-valued measures have been an active area of research
since the 19th century when Stieltjes obtained
 first significant results in the one-dimensional case. One point of entry
 to this subject area is Schm\"udgen's textbook \cite{Schm}.

In applications one usually considers a restricted class of measures, e.g.~those with finite  or
low-dimensional support, Gaussian mixtures, unimodal measures, just to mention a few. The set of moments  is often restricted as well, e.g.~by degree or structure.
One classical situation occurs in logarithmic potential theory 
where one studies harmonic moments \cite{Nov}.
 Such restrictions reveal many interesting features,
 such as the non-uniqueness of a measure with given moments (cf.~\cite{BroSt}).
 Another feature that is important, but much less studied,~is the
   overdeterminacy of the moment problem. This arises from 
   relations among the moments.
   
We are interested in polynomial relations among moments 
of probability measures on $\RR^d$. Such relations
exist for many natural families of measures \cite{BuFrSh}. They define
the moment varieties of these families.
For finitely supported measures these are  the
 secant varieties of Veronese varieties \cite{LO}.
Moment varieties of Gaussians and their mixtures were 
characterized in \cite{AFS, ARS}.

In this paper we study moment varieties arising from realization spaces of
convex polytopes \cite{Ziegler}. 
If $P$ is a polytope in $\RR^d$, then  we
 write $\mu_P$ for the uniform probability distribution on $P$.
The {\em moments} of the distribution $\mu_P$ are the expected values of the monomials:
\begin{equation}
\label{eq:moment} m_{i_1 i_2 \cdots i_d}  \quad = \quad
\int_{\RR^d} w_1^{i_1} w_2^{i_2} \cdots w_d^{i_d}\, {\rm d} \mu_P 
\qquad {\rm for} \,\, i_1,i_2,\ldots, i_d \in \NN. 
\end{equation}
 The list of all moments  $\bigl( \,m_I \,: \,I \in \NN^d\, \bigr)$
 uniquely encodes  the polytope $P$ since any positive or real-valued measure with compact support
  is determined by its full list of moments. 
  
The inverse moment problem for polygons and polytopes is still largely unexplored.
It has appeared in logarithmic potential theory \cite {BroSt, PaS},
 and in connection with the mother body problem  \cite {Gu}. 
  Algorithms for  reconstructing $P$ from  its axial moments
  can be found in ~\cite{GLPR, GPSS}.
  A practical application for moments of planar polygons was suggested by
  Sharon and Mumford~\cite{ShMu} 
  as a tool to reconstruct arbitrary planar shapes from their fingerprints.
  
 To introduce our topic of investigation,  suppose that $P$ is a simplicial polytope 
 in $\RR^d$ with $n$ vertices, denoted $x_k = (x_{k 1},x_{k 2},\ldots,x_{k d})$
 for $k=1,2,\ldots,n$.
One can vary these vertices locally without changing the
 combinatorial type $\cP$ of the polytope $P$.
Following  \cite[Chapter 3]{Ziegler}, by the {\em combinatorial type}
 $\cP$ we mean the lattice of faces of $P$.
For  fixed $\cP$,
 each moment
$m_I$, for $I=(i_1,\dots , i_d)$, becomes a locally defined function of the $n \times d$-matrix $X = (x_{k l})$.
We shall see in Section~\ref{sec2} that this function is rational and therefore extends to a
dense set of matrices $X$.
Furthermore, it is homogeneous of degree $|I|$, i.e.~$m_I(t X) = t^{|I|} m_I(X)$.

\begin{figure}[h]
\includegraphics[width=0.3\textwidth]{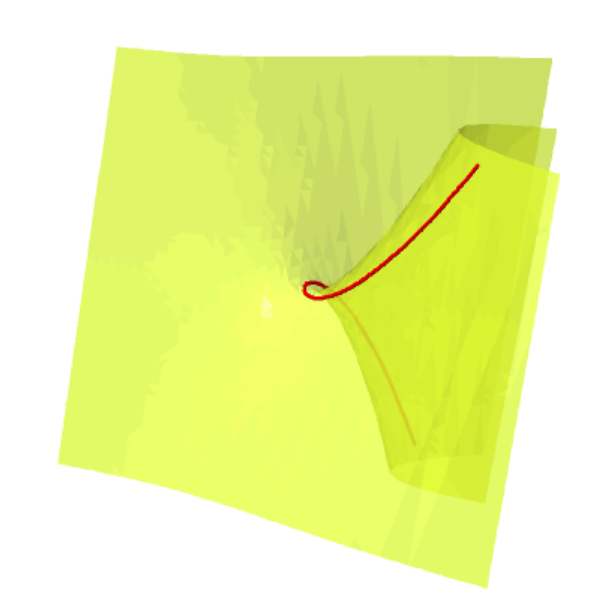} \qquad
\includegraphics[width=0.3\textwidth]{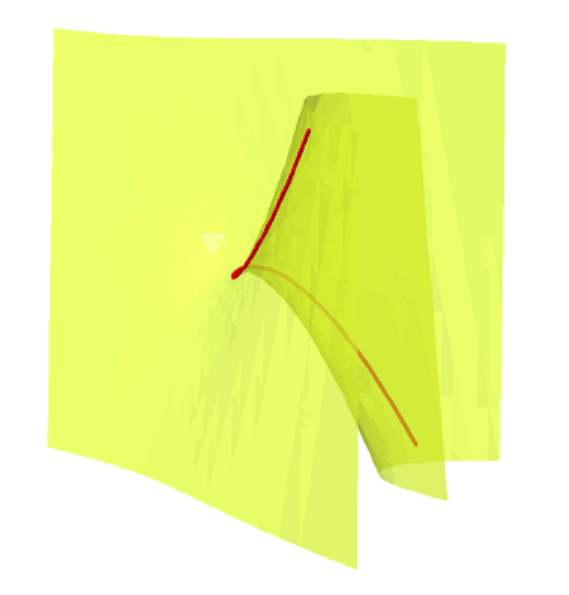} \quad
\includegraphics[width=0.3\textwidth]{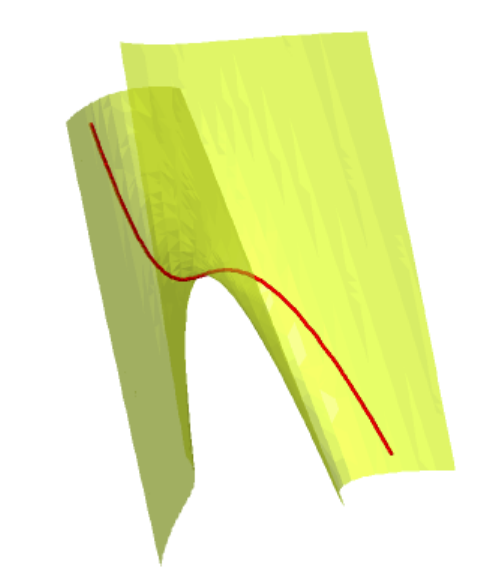}
\caption{The cubic surface (\ref{eq:cubicmm}) represents the first three moments 
(\ref{eq:1dimmoment}) of a line segment.
Segments of length zero correspond to points on the twisted cubic curve (shown in red).}
\label{fig:M13}
\end{figure}

\begin{example}[$d=1, n=2$]
\label{ex:d1n2} \rm 
The polytope is a segment $P = [a,b]$ on the real line  $\RR^1$.
Here $a = x_{11}$ and $b=x_{21}$.
The $i$th moment of the uniform distribution on $P$ is found by calculus:
\begin{equation}
\label{eq:1dimmoment}
 m_i \,\, = \,\, \frac{1}{b-a} \int_{a}^{b} \! w^i \,{\rm d}w \,\, = \,\,
\frac{1}{i{+}1}\frac{b^{i+1} - a^{i+1}}{b - a} \,\, = \,\,
\frac{1}{i{+}1} (a^i + a^{i-1} b + a^{i-2} b^2 + \cdots + b^i). 
\end{equation}
These expressions are the coefficients of the
{\em normalized moment generating function}
\begin{equation}
\label{eq:momentgenf2}
 \sum_{i=0}^\infty \, (i+1) \cdot m_i  \cdot t^i \quad = \quad
\frac{1}{(1 - a t)(1-b t)} . 
\end{equation}
The parametrization $(a,b) \mapsto  (m_0:m_1:\cdots:m_r)$
defines a surface in projective $r$-space $\PP^r$, for any $r \geq 3$.
The first such {\em moment surface}, shown in Figure~\ref{fig:M13},
is defined by the equation
\begin{equation}
\label{eq:cubicmm}
 2 m_1^3 - 3 m_0 m_1 m_2 + m_0^2 m_3\,\, = \,\, 0. 
\end{equation}
This cubic surface in $\PP^3$ is
 singular along the line $\{ m_0 = m_1 = 0\}$ in the plane at infinity.
 It also contains the twisted cubic curve
$\,\{m_0m_2 = m_1^2,\, m_0 m_3 = m_1m_2,  \, m_1 m_3 = m_2^2 \}$.
Points on that curve correspond to
degenerate line segments $[a,a]$ of length zero. 
\end{example}

The objects studied in this paper generalize Example \ref{ex:d1n2}.
We fix a combinatorial type $\mathcal{P}$ of simplicial $d$-polytopes
and a subset
$\mathcal{A} \subset \NN^d$ with $0 \not\in \mathcal{A}$.
Consider the semialgebraic set  of $n \times d$ matrices $X$ whose rows are the
vertices of a polytope of  type $\cP$.
This set is open in $\RR^{n \times d}$. Each moment  $m_I$ depends
rationally on $X$, so it extends to a unique rational function on $\CC^{n \times d}$.
The vector of moments $\bigr(\,m_I \,: \,I \in \mathcal{A} \cup \{0\} \,\bigr)$
 defines a rational map $\CC^{n \times d} \dashrightarrow \PP^{|\mathcal{A}|}$.
The {\em moment variety}  $\mathcal{M}_\mathcal{A}(\cP)$ is the
closure of the image of this map.
By construction, $\mathcal{M}_\mathcal{A}(\cP)$ is an irreducible
projective variety. Its dimension is $nd$, provided $\mathcal{A}$ is big enough.
Our aim is to compute these moment varieties as explicitly as~we~can.
Of particular interest is the variety given by all moments of order $\leq r$.
This is denoted
\begin{equation}
\label{eq:uptok}
\mathcal{M}_{[r]}(\cP) \,\,\subset \,\,\PP^{\binom{d+r}{d}-1} . 
\end{equation}

If $\cA$ lies in a coordinate subspace then 
we can reduce the dimensionality of our problem, 
but at the cost of passing to non-uniform measures on polytopes. 
 Suppose that $\cA\subset \NN^{d^\prime}$ for $ d^\prime<d$
 and let $\pi$ be the projection $\RR^d \rightarrow \RR^{d'}$.
 All moments  $m_I$ with $I \in \cA$ of the original polytope $P\subset \RR^d$ are moments
  of the induced distribution    $\pi_*(\mu_P)$   on  $P^\prime=\pi(P)$ in $\RR^{d'}$.
  Its density at $p \in P'$ is the $(d-d^\prime)$-dimensional volume of the inverse image $\pi^{-1}(p)\subset P$. In other words, $\pi_*(\mu_P)$      is the push-forward of $\mu_P$ under the projection $\pi$.  Densities of such measures are piecewise polynomial functions of degree $d-d^\prime$ and are called {\em polytopal splines}. They have been studied since the pioneering paper \cite{CuSch};  for more details consult \cite{DaMi,DCoPro}. 

This paper is organized as follows. 
In Section~\ref{sec2} we derive the parametric representation for our moment varieties.
It is encoded in a rational generating function 
(Theorem \ref{thm:nmgf_polytope}) whose numerator polynomial (\ref{eq:adjoint})
is Warren's adjoint from geometric modeling \cite{Warren1, Warren2}.
Section~\ref{sec3} concerns the univariate distribution obtained
by projecting $P$ onto a line segment $P'$. This corresponds to the above polytopal 
splines  with $d'=1$. Their moment varieties are
determinantal. 
Explicit Gr\"obner bases are furnished by the Hankel matrices in Theorem~\ref{th:Kathlen}.

In Section \ref{sec4} we examine the case  when $\mathcal{P}$ is the $d$-simplex.
We study moments and cumulants for 
uniform probability distributions on simplices, and we express
these as multisymmetric functions. This connects us to
an interesting, but notoriously difficult, subject in algebraic combinatorics.
Brill's equations  \cite{Guan} are used to characterize
the moment varieties of simplices. 
A Grassmannian makes a surprise appearance in
Proposition~\ref{prop:grass}.
The section concludes with tetrahedra: their moments of order $\leq 3$
form a $12$-dimensional variety in $\PP^{19}$.

Moment varieties of polytopes are invariant under affine transformations but
not under projective transformations. Section \ref{sec5} explores these
group actions and their invariant theory.
This is subtle because the affine group is not reductive, but
Theorem~\ref{thm:covariants} offers a remedy.

Section~\ref{sec6} presents a computer-aided case study for quadrilaterals.
We calculate their moment hypersurfaces in $\PP^9$.  This includes
the invariant hypersurface of degree $18$ in Theorem~\ref{thm:beaquad}.
We also examine concrete issues of identifiability and symmetry,
seen in the fiber of ten quadrilaterals in Figure \ref{fig:quads}, and in
relations among moments of tetrahedra in Proposition~\ref{prop:tetra}.
Some of our results are proved by certified numerical computations  as in \cite{Bertini}.

Section~\ref{sec7} offers a summary of this paper and an outlook for future directions.
Readers will find numerous open questions that arise from our
investigations in the earlier sections.

\medskip
\noindent

\bigskip \bigskip

\section{Generating Functions}
\label{sec2}

The moments of a polytope $P$ can be encoded in a rational generating function.
We begin by explaining this for the special case $n=d+1$, when 
 $P$ is a $d$-dimensional simplex $ \Delta_d$ in $\RR^d$.
 The vertices of the simplex $\Delta_d$ are denoted by $\,x_k = (x_{k1},\ldots,x_{kd})$
for $ k=1,2,\ldots, d+1$.

\begin{lemma}
\label{lem:simplex}
The moments $m_I$ of the uniform probability distribution on
the simplex $\Delta_d$  are obtained from the coefficients of
the {\em normalized moment generating function}
\begin{equation}
\label{eq:momentgenf} \!\!
\prod_{k=1}^{d+1} \frac{1}{1 - (x_{k1} t_1 {+} x_{k2} t_2 {+} \cdots {+} x_{kd} t_d)} \,\,\, = \,
\sum_{i_1,i_2,\ldots,i_d \in \NN} \!\!\! \frac{(i_1 {+} i_2 {+} \cdots {+} i_d +d)!}{i_1 ! \, i_2 !\, \cdots\, i_d! \,d !}
m_{i_1 i_2 \cdots i_d} t_1^{i_1} t_2^{i_2} \cdots t_d^{t_d}. 
\end{equation}
Each moment $m_I$ is a homogeneous polynomial of degree $| I |$
 in the $d^2+d$ unknowns $x_{kl}$.
This polynomial is multisymmetric:
it is invariant under permuting the vertices $x_1,x_2,\ldots,x_{d+1}$.
\end{lemma}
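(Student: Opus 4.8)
The plan is to pass to barycentric coordinates, rewrite the weighted moment series as a single closed integral via the negative binomial theorem, and evaluate that integral by the classical Feynman parameter identity.

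Let $\Delta^{\mathrm{std}} = \{\lambda=(\lambda_1,\ldots,\lambda_d)\in\RR^d : \lambda_l\ge 0,\ \lambda_1+\cdots+\lambda_d\le 1\}$ denote the standard $d$-simplex and put $\lambda_{d+1} := 1-\lambda_1-\cdots-\lambda_d$. The affine map $\lambda\mapsto\sum_{k=1}^{d+1}\lambda_k x_k$ carries $\Delta^{\mathrm{std}}$ onto $\Delta_d$ with constant Jacobian, and $\Delta^{\mathrm{std}}$ has volume $1/d!$, so the uniform probability measure $\mu_{\Delta_d}$ is the push-forward of $d!\,{\rm d}\lambda$. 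Substituting into (\ref{eq:moment}) gives
\[
  m_I \;=\; d!\int_{\Delta^{\mathrm{std}}}\ \prod_{l=1}^{d}\bigl(\lambda_1 x_{1l}+\lambda_2 x_{2l}+\cdots+\lambda_{d+1}x_{d+1,l}\bigr)^{i_l}\,{\rm d}\lambda .
\]
For each fixed $\lambda$ the integrand is a homogeneous polynomial of degree $i_1+\cdots+i_d=|I|$ in the $x_{kl}$ with coefficients polynomial in $\lambda$, and integrating these coefficients over $\Delta^{\mathrm{std}}$ yields rational numbers (Dirichlet integrals). Hence $m_I$ is a homogeneous polynomial of degree $|I|$ in the $d^2+d$ unknowns $x_{kl}$. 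Multisymmetry is now immediate: permuting $x_1,\ldots,x_{d+1}$ by $\sigma\in S_{d+1}$ amounts to permuting the barycentric coordinates $\lambda_1,\ldots,\lambda_{d+1}$ by $\sigma$, a volume-preserving symmetry of $\Delta^{\mathrm{std}}$, so the integral is unchanged.

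For the generating function, fix $t$ in a neighborhood of the origin, so that all series below converge absolutely and may be integrated term by term, and abbreviate $y_k := x_{k1}t_1+\cdots+x_{kd}t_d$. Substituting the formula above for $m_I$ into the series on the right of (\ref{eq:momentgenf}) and applying the multivariate negative binomial theorem
\[
  \sum_{I\in\NN^d}\frac{(|I|+d)!}{i_1!\,i_2!\cdots i_d!\,d!}\,u_1^{i_1}u_2^{i_2}\cdots u_d^{i_d} \;=\; \frac{1}{(1-u_1-u_2-\cdots-u_d)^{d+1}}
\]
(which follows by expanding $(1-u_1-\cdots-u_d)^{-(d+1)}=\sum_{n\ge 0}\binom{n+d}{d}(u_1+\cdots+u_d)^n$ with the multinomial theorem) in the variables $u_l=t_l\sum_{k}\lambda_k x_{kl}$, so that $u_1+\cdots+u_d=\sum_k\lambda_k y_k$, that series equals
\[
  d!\int_{\Delta^{\mathrm{std}}}\Bigl(1-\sum_{k=1}^{d+1}\lambda_k y_k\Bigr)^{-(d+1)}\,{\rm d}\lambda \;=\; d!\int_{\Delta^{\mathrm{std}}}\Bigl(\sum_{k=1}^{d+1}\lambda_k(1-y_k)\Bigr)^{-(d+1)}\,{\rm d}\lambda,
\]
where the last equality uses $\lambda_1+\cdots+\lambda_{d+1}=1$ on $\Delta^{\mathrm{std}}$.

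It thus remains to prove the classical Feynman parameter identity
\[
  d!\int_{\Delta^{\mathrm{std}}}\bigl(\lambda_1 a_1+\lambda_2 a_2+\cdots+\lambda_{d+1}a_{d+1}\bigr)^{-(d+1)}\,{\rm d}\lambda \;=\; \frac{1}{a_1 a_2\cdots a_{d+1}} ,
\]
which, specialized to $a_k = 1-y_k = 1-(x_{k1}t_1+\cdots+x_{kd}t_d)$, is exactly the product on the left of (\ref{eq:momentgenf}); reading the chain of equalities from right to left then proves the lemma. I expect this integral identity to be the only genuinely computational step. For completeness I would establish it by induction on $d$, the base case $d=1$ being a one-line computation. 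Integrating out $\lambda_1$ over the interval $[0,\,1-\lambda_2-\cdots-\lambda_d]$ lowers the exponent from $d+1$ to $d$ and yields the factor $1/\bigl(d(a_1-a_{d+1})\bigr)$ times the difference of the two $(d-1)$-dimensional integrals of this type with $a$-data $(a_1,a_2,\ldots,a_d)$ and $(a_{d+1},a_2,\ldots,a_d)$; applying the inductive hypothesis and combining the two resulting rational functions produces a factor $a_1-a_{d+1}$ that cancels, leaving $1/(a_1a_2\cdots a_{d+1})$.
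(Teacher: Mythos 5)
Your proof is correct. The paper itself does not prove Lemma \ref{lem:simplex}; it simply cites \cite[Theorem 10]{BBDKV} and \cite[Corollary 3]{GPSS}, and your argument --- barycentric coordinates, the negative binomial expansion of $(1-u_1-\cdots-u_d)^{-(d+1)}$, and the Feynman/Dirichlet integral $d!\int_{\Delta^{\mathrm{std}}}\bigl(\sum_k\lambda_k a_k\bigr)^{-(d+1)}{\rm d}\lambda = (a_1\cdots a_{d+1})^{-1}$ proved by integrating out one variable at a time --- is exactly the standard derivation found in those sources, so you have made the lemma self-contained. All steps check out, including the Jacobian normalization $d!$, the justification of multisymmetry via the $S_{d+1}$-action on barycentric coordinates, and the telescoping cancellation of the factor $a_1-a_{d+1}$ in the induction.
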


\begin{proof}
This can be found in several sources, e.g.~\cite[Theorem 10]{BBDKV} and \cite[Corollary 3]{GPSS}.
 \end{proof}

Observe that the normalized moment generating function (\ref{eq:momentgenf})
is different from the  standard {\em exponential moment generating function},
 commonly used in statistics and probability:
\begin{equation}
\label{eq:momentA}
\sum_{i_1,i_2,\ldots,i_d \in \NN}  \frac{m_{i_1 i_2 \cdots i_d}}
{i_1 ! \, i_2 !\, \cdots\, i_d! }  t_1^{i_1} t_2^{i_2} \cdots t_d^{t_d}. 
\end{equation}
This is the exponential version of the {\em ordinary generating function}
\begin{equation}
\label{eq:momentB}
\sum_{i_1,i_2,\ldots,i_d \in \NN}  m_{i_1 i_2 \cdots i_d}
 t_1^{i_1} t_2^{i_2} \cdots t_d^{t_d}. 
\end{equation}
The reason why we prefer (\ref{eq:momentgenf}) over these is that
 (\ref{eq:momentA}) and (\ref{eq:momentB}) are not rational functions.
This can be seen already for $d=1$ and $n=2$ as  in Example \ref{ex:d1n2}.
In that case, (\ref{eq:momentgenf})
is the rational function in (\ref{eq:momentgenf2}), whereas
the two other series (\ref{eq:momentA}) and (\ref{eq:momentB}) are 
the non-rational functions
\begin{equation}
\label{eq:nonrational}
\sum_{i=0}^\infty \frac{m_i}{i !} t^i \,=\,
\frac{{\rm exp}(bt) - {\rm exp}(at)}{(b-a) t} \qquad {\rm and} \qquad
\sum_{i=0}^\infty m_i t^i \,= \, 
\frac{{\rm log}(1-ta)- {\rm log}(1-tb)}{(b-a)t} .
\end{equation}

\smallskip

Let $P$ be a full-dimensional simplicial  polytope in $\RR^d$
with vertices $x_1,\ldots,x_n$, where $n \geq d+2$.
Fix any triangulation $\Sigma$ of $P$ that uses only these vertices.
We identify $\Sigma$ with the collection of subsets
$\sigma = \{\sigma_0,\ldots,\sigma_d\}$ that index the maximal simplices
${\rm conv}(x_{\sigma_0}, \ldots,x_{\sigma_d})$. The volume
of $P$ equals the sum of the volumes of these simplices. We write this as
$$ {\rm vol}(P) \,  = \, \sum_{\sigma \in \Sigma} {\rm vol}(\sigma). $$
If $\mu_\sigma$ denotes the uniform probability distribution on each simplex $\sigma$ then we have
$$ \mu_P \, = \, \frac{1}{{\rm vol}(P)} \sum_{\sigma \in \Sigma} {\rm vol}(\sigma) \mu_\sigma. $$
Since the moments depend linearly on the measure, 
Lemma \ref{lem:simplex} implies the following result:

\begin{theorem} \label{thm:nmgf_polytope}
The normalized moment generating function for the uniform probability distribution  $\mu_P$  
on the simplicial polytope $P$ is equal to
$$
\frac{1}{{\rm vol}(P)}
\sum_{\sigma \in \Sigma} \frac{{\rm vol}(\sigma)}{ 
\prod_{k \in \sigma} (1- x_{k 1} t_1 - x_{k 2} t_2 - \cdots - x_{k d} t_d)}
\,\,\,  = \,\,
\sum_{i_1,\ldots,i_d \in \NN} \!\!\! \! \frac{(i_1 {+}  \cdots {+} i_d {+}d)!}{i_1 ! \, \cdots\, i_d! \,d !}
m_{i_1  \cdots i_d} t_1^{i_1} \cdots t_d^{t_d}. 
$$
This expression is independent of the triangulation $\Sigma$. The coefficient $m_I = m_I(X)$ of \,$t^I$
is a rational function whose numerator is a homogeneous polynomial of degree $| I | + d$ in $X$ and whose
denominator equals ${\rm vol}(P)$, which is a homogeneous polynomial of degree $d$ in $X$.
\end{theorem}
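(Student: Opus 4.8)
The plan is to assemble the statement from Lemma~\ref{lem:simplex} plus linearity of moments in the measure. First I would fix a triangulation $\Sigma$ of $P$ using only the vertices $x_1,\ldots,x_n$; such a triangulation exists because $P$ is simplicial. Writing $\mu_P = \frac{1}{\mathrm{vol}(P)}\sum_{\sigma\in\Sigma}\mathrm{vol}(\sigma)\,\mu_\sigma$ as in the text, and using that each moment $m_I$ is a linear functional of the measure, I would apply Lemma~\ref{lem:simplex} to each simplex $\sigma$: the normalized moment generating function of $\mu_\sigma$ is $\prod_{k\in\sigma}\bigl(1-x_{k1}t_1-\cdots-x_{kd}t_d\bigr)^{-1}$. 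Summing with weights $\mathrm{vol}(\sigma)/\mathrm{vol}(P)$ and matching coefficients of $t^I$ against the normalization factor $\frac{(|I|+d)!}{i_1!\cdots i_d!\,d!}$ gives the displayed identity term by term. The independence from $\Sigma$ is then immediate: the left-hand side equals the generating function of $\mu_P$, which does not depend on any auxiliary choice.

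Next I would address the claim that $m_I(X)$ is a rational function with the stated numerator and denominator degrees. The key point is that $\mathrm{vol}(\sigma)$ is, up to sign and the constant $1/d!$, the determinant of the $(d{+}1)\times(d{+}1)$ matrix obtained by appending a column of ones to the rows $x_{\sigma_0},\ldots,x_{\sigma_d}$; hence it is a homogeneous polynomial of degree $d$ in $X$. Likewise $\mathrm{vol}(P)=\sum_\sigma\mathrm{vol}(\sigma)$ is homogeneous of degree $d$ in $X$. For a fixed $\sigma$, expanding $\prod_{k\in\sigma}(1-\langle x_k,t\rangle)^{-1}=\sum_{a\in\NN^{d+1}}\prod_{k\in\sigma}\langle x_k,t\rangle^{a_k}$ and extracting the coefficient of $t^I$ shows that the $t^I$-coefficient of $\mathrm{vol}(\sigma)\cdot\prod_{k\in\sigma}(1-\langle x_k,t\rangle)^{-1}$ is homogeneous of degree $d+|I|$ in $X$. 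Summing over $\sigma$ and dividing by the normalization constant and by $\mathrm{vol}(P)$ yields $m_I=N_I(X)/\mathrm{vol}(P)$ with $N_I$ homogeneous of degree $|I|+d$; this also re-derives the homogeneity $m_I(tX)=t^{|I|}m_I(X)$ promised in the introduction.

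Finally I would note the one subtlety worth spelling out. A priori the rational function $\sum_\sigma \mathrm{vol}(\sigma)\,N_{I,\sigma}(X)/\mathrm{vol}(P)$ could have its numerator share a common factor with $\mathrm{vol}(P)$ after summation, or conversely the numerator might fail to be well-defined before clearing denominators; the honest formulation is that we clear the common denominator $\prod_{\sigma}\prod_{k\in\sigma}(1-\langle x_k,t\rangle)$ is \emph{not} what we want — rather, each summand already has denominator dividing $\mathrm{vol}(P)\cdot(\text{a product of linear forms in }t)$, and after extracting the $t^I$-coefficient the only denominator that survives is $\mathrm{vol}(P)$, because the factors $(1-\langle x_k,t\rangle)^{-1}$ contribute only to the $t$-dependence, not to the $X$-dependence of the coefficients. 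So the main obstacle, and the place to be careful, is bookkeeping the separation of the $t$-variables from the $X$-variables: one must argue that extracting a fixed $t^I$-coefficient from the per-simplex expansion leaves a polynomial in $X$ (degree $d$ from $\mathrm{vol}(\sigma)$, degree $|I|$ from the expansion), and that dividing by $\mathrm{vol}(P)$ is the only division needed. Once that is clear, the degree count and the triangulation-independence follow without further work.
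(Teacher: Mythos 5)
Your proposal is correct and follows essentially the same route as the paper: decompose $\mu_P$ over a triangulation as $\mu_P = \frac{1}{\mathrm{vol}(P)}\sum_{\sigma}\mathrm{vol}(\sigma)\,\mu_\sigma$, invoke linearity of moments together with Lemma~\ref{lem:simplex} for each simplex, and read off the degree count from the degree-$d$ homogeneity of $\mathrm{vol}(\sigma)$ and the degree-$|I|$ homogeneity of the simplex moments. The paper leaves the bookkeeping in your last two paragraphs implicit, but nothing in your argument deviates from its proof.
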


To highlight the complexity of these moments, we examine the smallest non-simplex case.

\begin{example}[$d=2, n=4$] \rm
\label{ex:quadrMoments}
The polytope $P$ is a quadrilateral in the plane, with
cyclically labeled vertices $x_1,x_2,x_3,x_4$. The
moments of its uniform probability distribution $\mu_P$ are rational functions in eight unknowns 
$x_{kl}$.
The area of the quadrilateral is the quadratic form
$$ {\rm vol}(P) \,\,=\,\, \frac{1}{2}( x_{11} x_{22} {-} x_{12} x_{21} 
+ x_{21} x_{32} {-} x_{22} x_{31} + x_{31} x_{42} {-} x_{32} x_{41}
 +  x_{41} x_{12}  {-} x_{42}   x_{11} ). $$
The mean vector of the distribution $\mu_P$ is the centroid 
$(m_{10}, m_{01}) = \frac{1}{2 {\rm vol}(P)}(M_{10}, M_{01}) $, where
$$ \begin{small}
\begin{matrix} M_{10} & =   & \,\,
(x_{41}-x_{21}) (x_{41}+x_{11}+x_{21}) x_{12}   \,\,+\,\,
(x_{11}-x_{31}) (x_{11}+x_{21}+x_{31}) x_{22}  \,\,+ \\ & & \!\!
(x_{21}-x_{41}) (x_{21}+x_{31}+x_{41}) x_{32} \,\,+\,\,  
(x_{31}-x_{11}) (x_{31}+x_{41}+x_{11}) x_{42}, \,\smallskip \\
 M_{01}  & = &\,\,
(x_{22}-x_{42}) (x_{42}+x_{12}+x_{22})x_{11} \,\,+ \,\,
(x_{32}-x_{12}) (x_{12}+x_{22}+x_{32})x_{21} \,\,+  \\ && \!\!
(x_{42}-x_{22}) (x_{22}+x_{32}+x_{42})x_{31} \,\,+ \,\,
(x_{12}-x_{32}) (x_{32}+x_{42}+x_{12})x_{41}.\quad
\end{matrix}
\end{small}
$$
The {\em covariance matrix} of the distribution $\mu_P$ equals
$$ \begin{pmatrix} m_{20} & m_{11} \\ m_{11} & m_{02} \end{pmatrix} \quad = \quad
\frac{1}{24 \cdot {\rm vol}(P)} \cdot \begin{pmatrix} 2 M_{20} &  M_{11} \\ M_{11} & 2 M_{02} \end{pmatrix}, $$
where
 $$
 \begin{small}
 \begin{matrix} 
 M_{20} & = &
x_{12}(x_{41}-x_{21})(x_{11}^2+x_{11}x_{21}+x_{11}x_{41}+x_{21}^2+x_{21}x_{41}+x_{41}^2)\, + \\ & & 
x_{22}(x_{11}-x_{31})(x_{11}^2+x_{11}x_{21}+x_{11}x_{31}+x_{21}^2+x_{21}x_{31}+x_{31}^2) \,+ \\  & & 
x_{32}(x_{21}-x_{41})(x_{21}^2+x_{21}x_{31}+x_{21}x_{41}+x_{31}^2+x_{31}x_{41}+x_{41}^2)	\,+ \\ & & 
x_{42}(x_{31}-x_{11})(x_{11}^2+x_{11}x_{31}+x_{11}x_{41}+x_{31}^2+x_{31}x_{41}+x_{41}^2). \,\,\,\,
\end{matrix}
\end{small}
$$
The other diagonal entry $M_{02}$ is similar, and the off-diagonal entry equals
$$
\begin{small}
\begin{matrix}
M_{11} & = & 
 (x_{11}x_{22}-x_{12}x_{21}) (2x_{11}x_{12}+x_{11}x_{22}+x_{12}x_{21}+2x_{21}x_{22}) \, + \\ & & 
 (x_{21}x_{32}-x_{22}x_{31}) (2x_{21}x_{22}+x_{21}x_{32}+x_{22}x_{31}+2x_{31}x_{32})\, + \\ & & 
 (x_{31}x_{42}-x_{32}x_{41}) (2x_{31}x_{32}+x_{31}x_{42}+x_{32}x_{41}+2x_{41}x_{42})\,+ \\ & & 
 (x_{12}x_{41}-x_{11}x_{42}) (2x_{11}x_{12}+x_{11}x_{42}+x_{12}x_{41}+2x_{41}x_{42}).\,\,\,\,
 \end{matrix}
 \end{small}
$$
In Section \ref{sec6} we shall examine the relations satisfied by higher moments of quadrilaterals.
\end{example}

Let us return to Theorem  \ref{thm:nmgf_polytope} and take a closer look
at the rational function seen there. The normalized moment generating function 
can be written with a common denominator
\begin{equation}
\label{eq:momentgenf3}
\frac{ {\rm Ad}_P( t_1,t_2,\ldots,t_d)}{\prod_{k=1}^n (1- x_{k1} t_1 - x_{k2} t_2 - \cdots - x_{kd} t_d)}.
\end{equation}
The numerator is an inhomogeneous polynomial of degree at most $n-d-1$ in the variables $t_1,t_2,\ldots, t_d$. Its coefficients are rational
functions in the entries of the $n \times d$ matrix $X = (x_{kl})$:
\begin{equation}
\label{eq:adjoint} {\rm Ad}_P( t_1,t_2,\ldots,t_d) \,\,\,= \,\,\, 
\sum_{\sigma \in \Sigma} \frac{{\rm vol}(\sigma)}{{\rm vol}(P)}
\prod_{k \not\in \sigma}\bigl(1- x_{k1} t_1 - x_{k2} t_2 - \cdots - x_{kd} t_d\bigr),
\end{equation}
where $\Sigma$ is any triangulation of the simplicial polytope $P$.
Since (\ref{eq:momentgenf3}) does not depend on the triangulation 
$\Sigma$, so does the polynomial ${\rm Ad}_P$. It is
an invariant of the simplicial polytope $P$.

We refer to ${\rm Ad}_P$ as the {\em adjoint} of $P$. This polynomial
was introduced by Warren to study barycentric coordinates in
geometric modeling \cite{Warren1, Warren2}. He associates this
to the simple polytope $P^*$ dual to $P$. For simplicity, we  assume $0 \in {\rm int}(P)$.
The polytope $P^*$ is the set of points $(t_1,\ldots,t_d)$ for which 
all linear factors in (\ref{eq:momentgenf3}) and (\ref{eq:adjoint}) 
are nonnegative. This implies that ${\rm Ad}_P$ is nonnegative on $P^*$.
The main result in \cite{Warren1} states that the adjoint depends only on $P$,
 and not on its triangulation $\Sigma$.
For us, this is a corollary to Theorem~\ref{thm:nmgf_polytope}.
 
\begin{corollary}  \rm
The adjoint ${\rm Ad}_P$ is independent of the triangulation $\Sigma$ of the polytope $P$.
\end{corollary}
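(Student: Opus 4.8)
The plan is to deduce this directly from Theorem~\ref{thm:nmgf_polytope}, which already asserts that the normalized moment generating function of $\mu_P$ does not depend on the chosen triangulation $\Sigma$. Abbreviate $\ell_k(t) = 1 - x_{k1}t_1 - x_{k2}t_2 - \cdots - x_{kd}t_d$ for $k=1,\ldots,n$, and set $D(t) = \prod_{k=1}^n \ell_k(t)$. The polynomial $D$ manifestly depends only on the vertex matrix $X$, not on $\Sigma$, since it is simply the product of all $n$ linear factors.

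First I would record the elementary identity $\prod_{k\notin\sigma}\ell_k(t) = D(t)\big/\prod_{k\in\sigma}\ell_k(t)$, valid in the field of rational functions $\CC(X,t)$. Substituting it into the defining formula (\ref{eq:adjoint}) yields
\[
{\rm Ad}_P(t) \;=\; D(t)\cdot \sum_{\sigma\in\Sigma} \frac{{\rm vol}(\sigma)}{{\rm vol}(P)}\cdot\frac{1}{\prod_{k\in\sigma}\ell_k(t)},
\]
and the sum on the right is exactly the normalized moment generating function from Theorem~\ref{thm:nmgf_polytope}; this is also the content of writing that function with common denominator as in (\ref{eq:momentgenf3}). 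Since the factor $D(t)$ is independent of $\Sigma$ and the sum is independent of $\Sigma$ by Theorem~\ref{thm:nmgf_polytope}, the product ${\rm Ad}_P$ is independent of $\Sigma$ as a rational function in $t$, hence as a polynomial (it is visibly a polynomial of degree at most $n-d-1$ from (\ref{eq:adjoint})). Equivalently: if triangulations $\Sigma_1,\Sigma_2$ produce numerators ${\rm Ad}_P^{(1)},{\rm Ad}_P^{(2)}$, then ${\rm Ad}_P^{(1)}/D = {\rm Ad}_P^{(2)}/D$, and cancelling the nonzero common denominator $D$ gives ${\rm Ad}_P^{(1)} = {\rm Ad}_P^{(2)}$.

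There is essentially no real obstacle here; the only step meriting a moment's care is the bookkeeping that clearing denominators in (\ref{eq:adjoint}) produces exactly $D$ for \emph{every} triangulation. This holds because each maximal simplex $\sigma$ in a triangulation of the $d$-polytope $P$ has precisely $d+1$ vertices, so $\prod_{k\in\sigma}\ell_k$ supplies exactly the $d+1$ factors of $D$ that are "missing" from $\prod_{k\notin\sigma}\ell_k$; thus all summands in (\ref{eq:adjoint}) do share the common denominator $D$. (A direct combinatorial argument, moving between triangulations by bistellar flips, is also possible but considerably more laborious, and it is subsumed by the generating-function argument above.)
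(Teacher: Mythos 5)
Your proposal is correct and is essentially the paper's own argument: the authors also obtain the corollary by writing the normalized moment generating function over the common denominator $\prod_{k=1}^n(1-x_{k1}t_1-\cdots-x_{kd}t_d)$ as in (\ref{eq:momentgenf3}) and invoking Theorem~\ref{thm:nmgf_polytope}, so that both the generating function and the denominator are triangulation-independent and hence so is the numerator ${\rm Ad}_P$. Your extra bookkeeping remark (each maximal simplex contributes exactly $d+1$ of the $n$ linear factors) is a harmless elaboration of the same reasoning.
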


The $n$ linear factors in (\ref{eq:momentgenf2}),
(\ref{eq:momentgenf3}) and (\ref{eq:adjoint}) vanish on
the $n$ facets of the dual polytope $P^*$. 
This imposes interesting vanishing conditions on the adjoint ${\rm Ad}_P$.
A {\em non-face} of $P$ is any subset  $ \tau$ of $\{1,2,\ldots,n\}$ such that
 $\{ x_k \,: \,k \in \tau\}$ is not the vertex set of a face of $P$.
For any non-face $\tau$, we write
$L_\tau$ for the affine-linear space in $\RR^d$ that is
defined by the equations $\sum_{j= 1}^d x_{kj} t_j = 1$ for $k \in \tau$.
The collection of subspaces $L_\tau$ is denoted by 
$\mathcal{NF}(P)$. We call this the {\em non-face subspace arrangement} of
the simplicial polytope $P$. Equivalently, $\mathcal{NF}(P)$ is the set 
of all intersections in $\RR^d \backslash P^*$
of collections of facet hyperplanes of the simple polytope~$P^*$.

\begin{corollary}
The adjoint ${\rm Ad}_P$ is a polynomial of degree at most $n-d-1$ that vanishes on the non-face subspace arrangement $\mathcal{NF}(P)$.
\end{corollary}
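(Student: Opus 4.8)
The plan is to read both assertions off the explicit expansion (\ref{eq:adjoint}) of the adjoint, after choosing the triangulation $\Sigma$ wisely and then using that the result does not depend on that choice.

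The degree bound is immediate. A maximal simplex $\sigma$ of a triangulation of the $d$‑polytope $P$ has $d+1$ vertices, so the complementary product $\prod_{k\notin\sigma}\bigl(1-x_{k1}t_1-\cdots-x_{kd}t_d\bigr)$ is a product of $n-(d+1)=n-d-1$ affine‑linear forms in $t_1,\ldots,t_d$. Taking a ${\rm vol}(\sigma)/{\rm vol}(P)$‑weighted sum of such products over $\sigma\in\Sigma$ does not raise the degree in $t$, so $\deg_t {\rm Ad}_P\le n-d-1$.

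For the vanishing, fix a non-face $\tau$; we must show ${\rm Ad}_P\equiv 0$ on $L_\tau$ (we may assume $L_\tau\neq\emptyset$). By the preceding corollary ${\rm Ad}_P$ is independent of the triangulation used in (\ref{eq:adjoint}), so it suffices to exhibit one triangulation $\Sigma$ no maximal cell $\sigma$ of which contains $\tau$: for such $\Sigma$, every summand in (\ref{eq:adjoint}) carries a factor $1-x_{k1}t_1-\cdots-x_{kd}t_d$ with $k\in\tau\setminus\sigma$, and this factor vanishes identically on $L_\tau$, hence so does the entire sum. Since $\tau$ is a non-face it is a proper subset of $\{1,\ldots,n\}$, so we may pick a vertex index $v\notin\tau$ and take for $\Sigma$ the pulling triangulation from $x_v$, i.e.\ the decomposition of $P$ into the $d$-simplices spanned by $x_v$ together with the vertices of a facet $F$ of $P$ with $x_v\notin F$; because $P$ is simplicial each such $F$ is a $(d-1)$-simplex, so these pieces are genuine simplices and do tile $P$.

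The only point that needs an argument is that such a cell ${\rm conv}(\{x_v\}\cup F)$ cannot contain $\tau$. If it did, then — because $v\notin\tau$ — all of the vertices $x_k$ with $k\in\tau$ would be vertices of the single facet $F$; but $F$ is a simplex face of the simplicial polytope $P$, so every subset of its vertex set is the vertex set of a face of $P$, which would force $\tau$ to be a face, contrary to hypothesis. Hence no cell of $\Sigma$ contains $\tau$, and the vanishing follows. I expect this last observation — that in a simplicial polytope a non-face cannot sit inside a single facet — to be the only real content; everything else is bookkeeping with (\ref{eq:adjoint}) together with the triangulation‑independence already in hand.
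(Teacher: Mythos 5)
Your argument is correct and follows the paper's proof exactly: the degree bound is read off the formula (\ref{eq:adjoint}), and the vanishing on $L_\tau$ is deduced from triangulation-independence together with the existence of a triangulation none of whose maximal cells contains the non-face $\tau$. The paper merely asserts that existence, whereas you supply the explicit construction (the pulling triangulation from a vertex $x_v$ with $v\notin\tau$, plus the observation that a non-face of a simplicial polytope cannot lie inside a single facet), which is a welcome addition but not a different route.
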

 
\begin{proof}
The vanishing property follows from the fact that, for every non-face
$\tau$ of the polytope $P$, there exists a triangulation $\Sigma$ of $P$
that does not have $\tau$ as a face.
\end{proof}

In an earlier version of this article, 
we conjectured that, for every simplicial $d$-polytope $P$ with $n$ vertices,
the adjoint ${\rm Ad}_P$ is the unique polynomial 
of degree $n-d-1$ with constant term $1$ that  vanishes on the non-face subspace arrangement $\mathcal{NF}(P)$.
This is not quite true: 
For instance, if $P$ is a regular octahedron 
such that
its three diagonals intersect in a common point $(\delta_1, \delta_2, \delta_3) \in \RR^3$,
then the adjoint $A_P$ is $(1-\delta_1t_1 - \delta_2t_2 - \delta_3t_3)^2$ 
and the non-face subspace arrangement $\mathcal{NF}(P)$ consists of three lines in the plane defined by $\delta_1t_1 + \delta_2t_2 + \delta_3t_3 = 1$.
So there is not a unique quadratic polynomial vanishing along $\mathcal{NF}(P)$,
as every reducible quadratic polynomial with $(1-\delta_1t_1 - \delta_2t_2 - \delta_3t_3)$ as one of its two factors satisfies this vanishing property.
However, varying the vertices of $P$ without changing its combinatorial type makes the three lines in $\mathcal{NF}(P)$ skew
such that there is indeed a unique quadric surface passing through these three lines.
A corrected version of our conjecture was recently proven:

\begin{theorem}[see \cite{KR}] \label{thm:uniqueAdjoint}
Let $P$ be a $d$-polytope with $n$ vertices.
If the projective closure $\mathcal{H}_{P^\ast} \subset \PP^d$
of the hyperplane arrangement formed by the linear spans of the facets of the dual polytope $P^\ast$ is simple 
(i.e. through any point in $\PP^d$ pass at most $d$ hyperplanes in $\mathcal{H}_{P^\ast}$),
there is a unique hypersurface in $\PP^d$ of degree $n-d-1$ which vanishes along the projective closure of $\mathcal{NF}(P)$.
The defining polynomial of this hypersurface is the adjoint of $P$.
\end{theorem}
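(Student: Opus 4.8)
The plan is to establish uniqueness first and then identify the unique hypersurface with $\mathrm{Ad}_P$. For uniqueness, I would count conditions versus parameters. The projective closure $\overline{\mathcal{NF}(P)}\subset\PP^d$ is a union of linear subspaces $\overline{L_\tau}$, one for each minimal non-face $\tau$ of $P$; a non-face of cardinality $s$ contributes a subspace of dimension $d-s$. Requiring a degree $n-d-1$ hypersurface in $\PP^d$ to contain $\overline{L_\tau}$ is a linear condition on the coefficient vector, so the hypersurfaces vanishing on $\overline{\mathcal{NF}(P)}$ form a linear system. The claim is that under the simplicity hypothesis on $\mathcal{H}_{P^\ast}$ this linear system is a single point. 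I would prove this by a dimension count combined with the existence half: Theorem~\ref{thm:nmgf_polytope} (and the corollary following it) already gives one such hypersurface, namely $\{\mathrm{Ad}_P=0\}$, of degree exactly $n-d-1$ and with constant term $1$. So the whole argument reduces to showing the linear system has (projective) dimension $0$, equivalently that the vanishing conditions cut the $\binom{n-1}{d}$-dimensional space of degree $n-d-1$ forms down to dimension $1$.

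For the dimension count I would argue inductively on $n$, peeling off one facet hyperplane of $P^\ast$ at a time — equivalently, specializing the hyperplane arrangement $\mathcal{H}_{P^\ast}$ by removing a hyperplane $H$. Restricting a degree $n-d-1$ form vanishing on $\overline{\mathcal{NF}(P)}$ to $H\cong\PP^{d-1}$ lands in the analogous linear system for the induced arrangement on $H$, while the kernel of this restriction is the ideal multiple $\ell_H\cdot(\text{degree }n-d-2\text{ forms vanishing on the residual locus})$. Simplicity of $\mathcal{H}_{P^\ast}$ is exactly what makes these restricted and residual arrangements again of the right combinatorial shape (the traces on $H$ of the other $n-1$ hyperplanes are in general position in the required sense), so the inductive hypothesis applies to both pieces and the dimensions add up correctly. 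The base cases are small arrangements where the linear system is visibly one-dimensional (e.g. $n=d+1$, where $\mathrm{Ad}_P$ is the constant $1$). Once uniqueness is in hand, the identification is immediate: $\mathrm{Ad}_P$ is a degree $n-d-1$ polynomial (Theorem~\ref{thm:nmgf_polytope}) vanishing on $\mathcal{NF}(P)$ (the corollary), hence it defines a hypersurface in the linear system, which by uniqueness must be \emph{the} hypersurface; matching the constant terms pins down the polynomial up to scalar.

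The main obstacle I anticipate is the inductive step on the arrangement: controlling how the linear system of hypersurfaces through $\overline{\mathcal{NF}(P)}$ behaves under restriction to a facet hyperplane, and verifying that the exact sequence relating the full, restricted, and residual systems stays exact — i.e. that there is no unexpected drop or jump in dimension — precisely when $\mathcal{H}_{P^\ast}$ is simple. This is a Castelnuovo–Mumford-regularity-flavored point: one needs the relevant ideals of subspace arrangements to have no higher syzygies obstructing the Hilbert-function computation in the single degree $n-d-1$, and simplicity is the combinatorial hypothesis that guarantees this. A secondary technical point is bookkeeping the correspondence between minimal non-faces of $P$ and the subspaces of $\mathcal{NF}(P)$, and checking that the projective closure does not introduce spurious components at infinity — but this is routine given that we have already assumed $0\in\mathrm{int}(P)$ so that $P^\ast$ and its facet hyperplanes are well-behaved. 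Since this result is attributed to \cite{KR}, I would follow their treatment for the regularity input and only sketch the reduction above.
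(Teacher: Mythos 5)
The paper does not actually prove Theorem~\ref{thm:uniqueAdjoint}: it states the result and defers entirely to \cite{KR}, so there is no internal proof to compare yours against. That said, your outline is consistent with how the cited result is established there: uniqueness by induction on the number of hyperplanes in $\mathcal{H}_{P^\ast}$, via the exact sequence relating the linear system of degree $n-d-1$ forms through the non-face (residual) arrangement, its restriction to one hyperplane $H$, and the residual system of degree $n-d-2$; and the identification of the unique hypersurface with $\{\mathrm{Ad}_P=0\}$ using the existence statement already supplied by Theorem~\ref{thm:nmgf_polytope} and its corollaries. That last step is the part the present paper genuinely contributes, and you handle it correctly.

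As a standalone argument, however, your sketch leaves open exactly the point where all the difficulty lives, and you should be aware that it is not a routine regularity check. The inductive step requires a precise combinatorial identification: the trace on $H$ of $\overline{\mathcal{NF}(P)}$ is \emph{not} simply the non-face arrangement of the facet polytope cut out by $H$ --- intersecting the other hyperplanes with $H$ creates new linear spaces that must be accounted for, and one has to verify both that the restricted locus imposes enough conditions to make the restricted system zero-dimensional and that the residual locus is again a non-face arrangement of an $(n-1)$-hyperplane configuration to which the inductive hypothesis applies. Simplicity of $\mathcal{H}_{P^\ast}$ is what keeps this bookkeeping under control, but asserting that ``the dimensions add up correctly'' is precisely the content of the theorem, not a consequence of a generic-position heuristic. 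Since the paper itself only cites \cite{KR} for this, deferring to that reference is acceptable here, but you should present the inductive step as the theorem's substance rather than as an anticipated technicality.
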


We note that the assumption in Theorem~\ref{thm:uniqueAdjoint} that the hyperplane arrangement $\mathcal{H}_{P^\ast}$ is simple implies that the polytope $P$ is simplicial. 
For instance, for a regular octahedron $P$, the plane arrangement $\mathcal{H}_{P^\ast}$ is not simple, but varying the vertices of $P$ makes $\mathcal{H}_{P^\ast}$ simple.

\medskip
The adjoint ${\rm Ad}_P$ is closely related to 
barycentric coordinates on the simple polytope $P^*$ and the associated
{\em Wachspress variety} in $\PP^{n-1}$; see \cite{IS, KR, Warren1, Warren2}.
These objects can be defined as follows.
Suppose that the origin $0$ lies in the interior of our simplicial polytope $P$,
and let $\Sigma_0$ be the triangulation of $P$
obtained by connecting $0$ to the boundary of $P$.
The facets of $\Sigma_0$ are $\sigma = 0 \cup \rho$ where 
$\rho$ is any facet of $P$. The formula (\ref{eq:adjoint})
holds for $\Sigma_0$, and we get
\begin{equation}
\label{eq:adjoint2} {\rm Ad}_P( t_1,t_2,\ldots,t_d) \,\,\,= \,\,\, 
\sum_{\rho\, {\rm is}\, {\rm a}\, {\rm facet} \, {\rm of} \, P} \beta_\rho 
\prod_{k \not\in \rho}\bigl(1- x_{k1} t_1 - \cdots - x_{kd} t_d\bigr).
\end{equation}
Here $\beta_\rho$ is the probability of the simplex $0 \cup \rho$, which is given by 
$|{\rm det}( x_k : k \in \rho)|$ divided by $d\, ! \, {\rm vol}(P)$. 
Each summand in (\ref{eq:adjoint2}) has degree $n-d$, but their 
sum has  degree $n-d-1$.

Let  $N$ denote the number of facets $\rho$ of $P$, i.e.~the number of vertices of $P^*$.
Consider the map $\, \RR^d \rightarrow \RR^N$
whose coordinates are the following rational functions, one for each~$\rho$:
$$
(t_1,\ldots,t_d)\,\, \mapsto\,\,
\frac{\beta_\rho \prod_{k \not\in \rho}\bigl(1- x_{k1} t_1  - \cdots - x_{kd} t_d\bigr)}{{\rm Ad}_P( t_1,t_2,\ldots,t_d) }.
$$
These are the {\em barycentric coordinates} of \cite{Warren1, Warren2}.
 These coordinate functions 
are nonnegative on $P^*$ and they sum up to $1$. The image of $P^*$ lies in the 
probability simplex with $N$ vertices.   We call this the {\em Wachspress model} of $P$.
The term  {\em model} is meant in the sense of algebraic statistics \cite{DSS}.
Its Zariski closure in  $\PP^{N-1}$ is the $d$-dimensional {\em Wachspress variety} of $P$.

In summary, the adjoint ${\rm Ad}_P$  was introduced in geometric modeling by
Warren \cite{Warren1}. It equals the numerator of the normalized 
moment generating function for the uniform
distribution $\mu_P$ on a simplicial polytope $P$ of type $\cP$.
The map $P \mapsto {\rm Ad}_P$  represents the computation of all moments
of $\mu_P$. This induces a polynomial map $X \mapsto {\rm Ad}_X$
on a dense open set of matrices $X \in \RR^{n \times d}$.
 Its image lies in an
affine space of dimension $\binom{n-1}{d} - 1$,
namely the space of polynomials of degree
$n-d-1$ in  $d$ variables with constant term $1$.
Passing to complex projective space, we define 
the {\em adjoint moment variety}
$\,\mathcal{M}_{\rm Ad}(\cP)\,$ to be the Zariski closure
of this image in $\PP^{\binom{n-1}{d}-1}$.
Readers of \cite{IS} are invited to regard $\mathcal{M}_{\rm Ad}(\cP)$ as a moduli space
of Wachspress varieties, and to contemplate the questions in Section~\ref{sec7}.

\section{One-Dimensional Moments}
\label{sec3}

In this section we characterize the relations among the
moments of the $1$-dimensional probability distributions that are obtained
by projecting the measures $\mu_P$ onto a line.
As before, let $P$ be a $d$-dimensional simplicial polytope with $n$
vertices. We fix the coordinate projection $\pi:  \RR^d \rightarrow \RR$
that takes $(t_1,t_2,\ldots,t_d)$ to its first coordinate $t_1$.
The pushforward $\pi_*(\mu_P)$ is a probability distribution on the
line $\RR^1$. 
The $i$th moment $m_i$ of this distribution equals the moment
$m_{i0\cdots 0}$ of $\mu_P$. For normalized moment generating functions, equation
(\ref{eq:momentgenf3}) implies
\begin{equation}
\label{eq:Aseries}
\sum_{i=0}^\infty \binom{d+i}{d} \,m_i t^i \quad = \quad
\frac{A_{n-d-1}(t)}{ (1- u_1 t)(1- u_2 t) \cdots (1-u_n t) } ,
\end{equation}
where $u_j = x_{j1}$ is the first coordinate of the $j$th vertex of the polytope $P$,
and the numerator is $A_{n-d-1}(t) =  {\rm Ad}_P (t,0,0,\ldots,0)$.
This is  a univariate polynomial of degree $n-d-1$.
We now confirm that the density of $\pi_*(\mu_P)$
is the polytopal spline mentioned in the Introduction.

\begin{proposition}
The density of $\,\pi_*(\mu_P)$ is a piecewise polynomial function
of degree $d-1$. Its value at any point $ a \in \RR^1$ is the
$(d-1)$-dimensional volume of the fiber $\pi^{-1}(a) \cap P$.
Moreover, this density function is $d-2$ times differentiable at its break points $u_1, \ldots, u_n$.
\end{proposition}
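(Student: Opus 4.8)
The plan is to establish the three assertions in turn, using the fiber description as the starting point and the generating function \eqref{eq:Aseries} for the smoothness claim. First I would recall the general fact that the pushforward of a measure with density $g$ along a linear projection has density equal to the integral of $g$ over the fibers; since $\mu_P$ has constant density $1/\mathrm{vol}(P)$ on $P$, the density of $\pi_*(\mu_P)$ at $a$ is $\mathrm{vol}_{d-1}(\pi^{-1}(a)\cap P)/\mathrm{vol}(P)$, up to the normalization convention. The fiber $\pi^{-1}(a)\cap P=\{w\in P: w_1=a\}$ is a polytope of dimension $d-1$ (for $a$ strictly between $\min_j u_j$ and $\max_j u_j$), so its $(d-1)$-volume is well-defined; this gives the second sentence immediately.

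For the piecewise-polynomiality and degree claim, I would fix a triangulation $\Sigma$ of $P$ using only the vertices $x_1,\dots,x_n$ (as in Section~\ref{sec2}) and apply additivity: $\mathrm{vol}_{d-1}(\pi^{-1}(a)\cap P)=\sum_{\sigma\in\Sigma}\mathrm{vol}_{d-1}(\pi^{-1}(a)\cap\sigma)$. For a single $d$-simplex $\sigma=\mathrm{conv}(x_{\sigma_0},\dots,x_{\sigma_d})$, the slice at height $a$ is the intersection of $\sigma$ with the hyperplane $w_1=a$; on the interval between two consecutive values of $\{u_{\sigma_0},\dots,u_{\sigma_d}\}$ this slice is a polytope whose vertices move affinely in $a$ along the edges of $\sigma$, and a classical computation (cf.\ the Brion/spline literature cited, \cite{CuSch, DaMi}) shows that its $(d-1)$-volume is a polynomial of degree exactly $d-1$ in $a$ on each such interval. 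The breakpoints of this piecewise polynomial are exactly among the projected vertices $u_1,\dots,u_n$; outside $[\min u_j,\max u_j]$ the density is $0$. Summing over $\sigma$ preserves "piecewise polynomial of degree $\le d-1$", and genericity of the $u_j$ keeps the degree equal to $d-1$ on the outermost pieces.

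The smoothness statement is where I would use \eqref{eq:Aseries}. The idea is that the density function $f(a)$ and its generating series are linked by the moments: $m_i=\int a^i f(a)\,da$ (with the normalization of \eqref{eq:Aseries} folded in), and \eqref{eq:Aseries} exhibits $\sum_i\binom{d+i}{d}m_it^i$ as a rational function with denominator $\prod_j(1-u_jt)$ of degree exactly $n$ and numerator $A_{n-d-1}(t)$ of degree $n-d-1<n$. A rational generating function with simple poles at $t=1/u_j$ corresponds, on the level of the density, to a piecewise polynomial $f$ whose pieces are glued with a prescribed order of smoothness: the order of the pole structure forces $f$ to be a $C^{d-2}$ function, i.e.\ continuous together with its first $d-2$ derivatives, with only the $(d-1)$st derivative jumping at the $u_j$. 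Concretely, I would argue that if $f$ had a discontinuity in its $k$th derivative at some $u_j$ with $k\le d-2$, then the Laplace/Fourier transform of $f$—equivalently, after a change of variable, the generating function in \eqref{eq:Aseries}—would have a pole at $1/u_j$ of order $\ge d-k\ge 2$, contradicting the simplicity of the poles of $A_{n-d-1}(t)/\prod_j(1-u_jt)$ when the $u_j$ are distinct. (Equivalently, one can see this directly from the simplex computation: each slice-volume is a polynomial that vanishes to order $d$ in $a$ at the vertex where the simplex "closes up", so assembling the simplices produces matching of derivatives up to order $d-2$.)

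The main obstacle I anticipate is making the smoothness bound precise and clean. One must be careful that the claim "$C^{d-2}$ at $u_1,\dots,u_n$" holds \emph{for generic} vertex positions, or under the standing simpliciality assumption — if several $u_j$ coincide the pole at that reciprocal value is no longer simple and one can lose derivatives. I would therefore state the smoothness either for distinct $u_j$ or note that the density extends as claimed to the Zariski-dense set where the $u_j$ are pairwise distinct; the degree and piecewise-polynomial statements need no such caveat. A secondary technical point is the exact correspondence between "order of pole of the generating function" and "order of smoothness of the density"; I would handle this with the standard dictionary between rational generating functions and piecewise-polynomial splines (the truncated-power-function basis), citing \cite{CuSch,DaMi} rather than reproving it.
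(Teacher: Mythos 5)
Your proposal is correct and follows essentially the same route as the paper: identify the density of $\pi_*(\mu_P)$ with the normalized fiber volume $a\mapsto \mathrm{vol}_{d-1}(\pi^{-1}(a)\cap P)$, and then appeal to the classical theory of polytopal splines (the paper cites \cite{DCoPro}, you cite \cite{CuSch,DaMi}) for piecewise polynomiality of degree $d-1$ and $C^{d-2}$ smoothness at the break points. Your additional remarks — the triangulation decomposition, the pole-order dictionary with \eqref{eq:Aseries}, and the caveat about coinciding $u_j$ — only add detail that the paper delegates to the cited literature.
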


\begin{proof}
The pushforward $\pi_*(\mu_P)$ is the measure that assigns
to a segment $[v,w]$ in $\RR^1$ the nonnegative real number $\mu_P( \pi^{-1}([v,w]) \cap P)$.
This number is the probability that a uniformly chosen random point in 
the $d$-polytope $P$ has
its first coordinate between $v$ and $w$. That probability can be computed
by integrating the normalized $(d-1)$-dimensional volumes of $\pi^{-1}(a) $ for 
the scalars $a$  ranging from $v$ to $w$.
It is well-known in the theory of polyhedral splines
(cf.~\cite{DCoPro}) that
this volume (called the polytopal density) is a piecewise polynomial function of degree $d-1$ in the 
parameter $a$.  This spline function is polynomial  on each of the intervals 
$[u_i,u_{i+1}]$, and it is  $d-2$ times differentiable
at all its break points $u_i$.
\end{proof}

Fix any integer $r \geq 2n-d$ and consider the moments 
$m_0,m_1,\ldots,m_r$.
These correspond to the moments of $\mu_P$ whose index
set $\mathcal{A}$ equals $\{\!\{r \}\!\} = \{i e_1 : i=1,2,\ldots,r\}$.
Using the notation from the Introduction, we are interested in
the moment varieties $\mathcal{M}_{\{\!\{r\}\!\}}(\cP) \subset \PP^{r}$.

\begin{lemma}
The moment variety $\mathcal{M}_{\{\!\{r \}\!\}}(\cP)$ has dimension 
$2n-d-1$ in $\PP^{r}$. This variety depends only on $d$, $n$ and $r$.
It is independent  of  the combinatorial type~$\cP$ of the polytope.
\end{lemma}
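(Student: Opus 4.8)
The plan is to deduce both claims (the dimension formula and the independence of $\cP$) from the explicit rational parametrization in \eqref{eq:Aseries}. First I would spell out that parametrization concretely: the point $(m_0:m_1:\cdots:m_r) \in \PP^r$ is determined by expanding the power series
$$
\frac{A_{n-d-1}(t)}{(1-u_1 t)(1-u_2 t)\cdots(1-u_n t)} \;=\; \sum_{i=0}^{\infty} \binom{d+i}{d} m_i t^i
$$
and truncating at degree $r$. The parameters here are $u_1,\dots,u_n$ (the first coordinates of the vertices) together with the $n-d-1$ nonleading coefficients of the degree-$(n-d-1)$ polynomial $A_{n-d-1}$; a priori these latter coefficients are constrained, since $A_{n-d-1}(t) = {\rm Ad}_P(t,0,\dots,0)$ is the restriction of Warren's adjoint and its coefficients are particular rational functions of $X$. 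The key observation is that as $P$ ranges over all simplicial polytopes of type $\cP$, the pair $\bigl(\{u_j\}, A_{n-d-1}\bigr)$ ranges over a dense subset of the set of all tuples: $n$ scalars together with an \emph{arbitrary} monic-up-to-scaling polynomial of degree $n-d-1$ — equivalently, the density of $\pi_*(\mu_P)$ can be an essentially arbitrary polytopal spline of degree $d-1$ with breakpoints $u_1<\cdots<u_n$. This is where I'd need to argue carefully, but the idea is that a generic configuration of breakpoints plus generic jump data in the $(d-1)$-th derivative realizes any such numerator; alternatively one can appeal to Theorem~\ref{thm:uniqueAdjoint} to see that the adjoint coefficients vary freely as the vertices move, after projecting to the $t_1$-axis.

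Granting that, the moment variety $\mathcal{M}_{\{\!\{r\}\!\}}(\cP)$ is the Zariski closure of the image of the rational map
$$
\Phi:\ \CC^{n} \times \CC^{n-d-1} \dashrightarrow \PP^{r},\qquad
(u_1,\dots,u_n,\,a_1,\dots,a_{n-d-1}) \;\longmapsto\; \Bigl(m_0:\cdots:m_r\Bigr),
$$
whose target coordinates are read off the truncated series with $A_{n-d-1}(t)=1+a_1t+\cdots+a_{n-d-1}t^{n-d-1}$. Since this map is written purely in terms of $d,n,r$ and makes no reference to $\cP$ whatsoever, its image — and hence its closure — depends only on $d,n,r$. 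That gives the second assertion immediately, once the first paragraph's density fact is in hand. For the dimension: the source of $\Phi$ has dimension $n+(n-d-1)=2n-d-1$, so $\dim \mathcal{M}_{\{\!\{r\}\!\}}(\cP)\le 2n-d-1$, and equality holds provided $\Phi$ is generically finite onto its image. This is essentially a statement that a rational function of degree $(n-d-1,n)$ is determined by sufficiently many of its Taylor coefficients: given the germ $\sum \binom{d+i}{d}m_i t^i$ up to order $r\ge 2n-d$, one recovers the denominator $\prod(1-u_jt)$ as the reciprocal polynomial annihilating the sequence (a Hankel/Padé computation, which succeeds once $r$ exceeds roughly $2n$), hence recovers the $u_j$ up to permutation, and then recovers $A_{n-d-1}$ by multiplying back — so a generic fiber of $\Phi$ is the $S_n$-orbit permuting the $u_j$, which is finite.

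The main obstacle is the first paragraph: showing that the restricted adjoint $A_{n-d-1}(t) = {\rm Ad}_P(t,0,\dots,0)$ genuinely sweeps out all degree-$(n-d-1)$ polynomials (with unit constant term) as the polytope varies, rather than being confined to some proper subvariety cut out by the vanishing conditions on $\mathcal{NF}(P)$. I expect this follows because restricting to the line $t_2=\cdots=t_d=0$ kills most of those linear constraints — generically only finitely many non-faces $\tau$ have $L_\tau$ meeting this line — and one can then invoke a dimension count: the map from the $nd$-dimensional realization space of type-$\cP$ polytopes to the $(n + (n-d-1))$-dimensional space of $(\{u_j\}, A_{n-d-1})$-data has image of the expected dimension $2n-d-1$ (note $2n-d-1 \le nd$ for $d\ge 2$, $n\ge d+2$, with the $d=1$ case handled directly as in Example~\ref{ex:d1n2}), and that image is not contained in any hypersurface because the leading behavior of the adjoint is controlled independently by the facet data $\beta_\rho$ in \eqref{eq:adjoint2}. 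Once that surjectivity-up-to-closure is established, everything else is the routine Padé-recovery argument sketched above.
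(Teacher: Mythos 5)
Your proposal follows essentially the same route as the paper: parametrize $\mathcal{M}_{\{\!\{r\}\!\}}(\cP)$ by the $n$ denominator roots $u_j$ together with the $n-d-1$ free coefficients of $A_{n-d-1}$, obtain the dimension from generic finiteness via moment recovery (the paper cites \cite{GLPR} where you spell out the Hankel/Pad\'e argument), and deduce independence of $\cP$ from the fact that the combinatorial type constrains the $(u,A)$-data only by inequalities, so the image is dense in that parameter space. The density step you flag as the main obstacle is exactly the point the paper also treats by assertion rather than detailed proof, so your added caution there is reasonable but does not signal a divergence in method.
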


\begin{proof}
Consider the probability distribution $\pi_*(\mu_P)$ where
$P$ runs over all polytopes of combinatorial type $\cP$.
Such a distribution is parametrized by
the $n$ parameters $u_i$ in the denominator of 
(\ref{eq:Aseries}) and the $n-d-1$ nonconstant coefficients of the 
numerator polynomial $A_{n-d-1}$.
Thus there are $2n-d-1$ degrees of freedom in specifying such a distribution,
or the associated spline function on $\RR^1$. Since the distribution
can be recovered from its first $2n-d$ moments (e.g.~by \cite{GLPR}),
the irreducible variety $\mathcal{M}_{\{\!\{r\}\!\}}(\cP)$ has dimension ${\rm min}(2n-d-1,r)$.

In the parametrization  above we obtain all polynomials
$A_{n-d-1}$ which are defined in some open set of the coefficient space $\RR^{n-d-1}$.
Hence the polytope type $\cP$  imposes only inequalities
but no equations on that parameter space. We therefore conclude that, 
for any combinatorial type $\cP$ of simplicial $d$-polytopes with $n$ vertices,
the moment variety $\mathcal{M}_{\{\!\{r\}\!\}}(\cP)$ is equal to the irreducible variety in $\PP^r$ that is given by the 
parametric representation (\ref{eq:Aseries}).
\end{proof}

We are now ready to state the main result in this section.
Our object of study is the subvariety 
$\mathcal{M}_{\{\!\{r\}\!\}}(d,n)$ of $\PP^r$ that is parametrically given by
 (\ref{eq:Aseries}), where $u_1,u_2,\ldots,u_n$ are
 arbitrary and $A_{n-d-1}(t)$ ranges over polynomials 
with constant coefficient $1$.
 We refer to this $(2n-d-1)$-dimensional variety as the
{\em $r$-th moment variety of polytopal measures of type $(d,n)$}.
To describe its homogeneous prime ideal, we introduce the normalized moments 
\begin{align*}
c_0 = c_1 = \cdots = c_{d-1} = 0 \quad\quad
\text{ and }  \quad\quad
c_{i+d} = \binom{d+i}{d} m_i \; \,\text{ for }\, i = 0,1, \ldots, r.
\end{align*}
We form
the following {\em Hankel matrix}
with $n+1$ rows and $r+d-n+1$ columns:
\begin{equation}
\label{eq:hankelmatrix}
\left(
\begin{array}{ccccccc}
c_0 & c_1 & \ldots & c_n & c_{n+1} & \ldots & c_{r+d-n} \\
c_1 & c_2 & \ldots & c_{n+1} & c_{n+2} & \ldots & c_{r+d-n+1} \\
\vdots & \vdots && \vdots & \vdots  && \vdots
\\
c_n & c_{n+1} & \ldots & c_{2n} & c_{2n+1} & \ldots & c_{r+d}
\end{array}
\right).
\end{equation}
Note that  each entry of this matrix is a scalar multiple of one of the moments $m_i$.

  \begin{theorem}\label{th:Kathlen}
  The homogeneous prime ideal in  $\mathbb{R}[m_0, m_1, \ldots, m_r]$ 
  that defines the moment variety $\mathcal{M}_{\{\!\{r\}\!\}}(d,n)$ 
    is generated by the maximal minors of the Hankel matrix (\ref{eq:hankelmatrix}).
    These minors form a reduced Gr\"obner basis with respect to any antidiagonal 
    term order, with initial monomial ideal $\langle m_{n-d},m_{n-d+1},\ldots,m_{r-n} \rangle^{n+1}$.
The degree of $\mathcal{M}_{\{\!\{r\}\!\}}(d,n)$ equals $\binom{r-n+d+1}{n}$.
\end{theorem}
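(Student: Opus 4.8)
The plan is to recognize the parametrization (\ref{eq:Aseries}) as a classical object in algebraic geometry: the variety of rational functions of the form $p(t)/q(t)$ with $\deg p \leq n-d-1$ and $\deg q \leq n$, expanded as a power series and truncated at order $r$. After the substitution $c_{i+d} = \binom{d+i}{d} m_i$ and $c_0 = \cdots = c_{d-1} = 0$, the series $\sum_i c_i t^i$ is exactly $t^d \cdot A_{n-d-1}(t) / \prod_j (1 - u_j t)$, i.e. a rational function with numerator of degree $\leq n-1$ vanishing to order $d$ at the origin and denominator of degree $\leq n$. The condition that a power series is the Taylor expansion of a rational function with numerator degree $< $ denominator degree $= n$ is governed by the vanishing of Hankel (catalecticant) determinants built from its coefficients — this is the classical Kronecker theorem on Hankel matrices and Padé approximation. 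So the first step is to verify that the $(n+1)$-row Hankel matrix (\ref{eq:hankelmatrix}) is precisely the one whose maximal minors vanish iff the truncated series $(c_0, \ldots, c_{r+d})$ comes from such a rational function, taking care that the truncation length $r+d+1$ is long enough (this is where $r \geq 2n-d$ enters) so that no spurious components appear.

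Next I would establish the ideal-theoretic and Gröbner-basis claims. The cleanest route is to compare with the well-understood case $d=0$, $n \mapsto n$: the variety $\mathcal{M}_{\{\!\{r\}\!\}}(0,n)$ parametrized by $A_{n-1}(t)/\prod_{j=1}^n (1-u_jt)$ truncated at order $r$ is a rational normal scroll-type variety whose ideal is generated by the maximal minors of a generic $(n+1) \times (r-n+1)$ Hankel matrix, and these minors are a reduced Gröbner basis under any antidiagonal term order with initial ideal the $(n+1)$-st power of the ideal generated by the "middle" variables — this is a classical result on Hankel determinantal ideals (one-generic matrices in the sense of Eisenbud, or the ideals of Mumford's scrolls; see also the Gröbner degeneration of Hankel ideals). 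The substitution $c_{i+d} = \binom{d+i}{d} m_i$ is an invertible rescaling of coordinates $m_i = c_{i+d}/\binom{d+i}{d}$ together with setting the first $d$ of the $c$'s to zero, and one must check that setting $c_0 = \cdots = c_{d-1} = 0$ in the generic Hankel matrix exactly reproduces (\ref{eq:hankelmatrix}) with its shifted index range, so that the ideal generated by maximal minors of (\ref{eq:hankelmatrix}) is the image of the classical Hankel ideal under a monomial change of coordinates. Since a diagonal rescaling of variables preserves primeness, the property of being a reduced Gröbner basis under antidiagonal orders, and the initial ideal up to the same rescaling, all three assertions (prime, Gröbner basis, initial ideal $\langle m_{n-d}, \ldots, m_{r-n}\rangle^{n+1}$) transfer directly. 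The dimension check $2n-d-1$ matches the previous lemma, giving a sanity cross-check.

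For the degree statement, I would use the initial monomial ideal. Since the initial ideal of the prime ideal is $\langle m_{n-d}, m_{n-d+1}, \ldots, m_{r-n} \rangle^{n+1}$, the degree of the variety equals the degree of this monomial ideal, which is the multiplicity of the $(n+1)$-st power of a linear ideal in $k$ variables where $k = (r-n) - (n-d) + 1 = r - 2n + d + 1$. The degree of $\mathfrak{m}^{n+1}$ for $\mathfrak{m} = \langle y_1, \ldots, y_k \rangle$ (as a projective scheme, i.e. $V(\mathfrak{m}^{n+1})$ which is a linear space of the right codimension with multiplicity) is the number of standard monomials of degree exactly $n$ in $k$ variables, namely $\binom{n+k-1}{n} = \binom{r-2n+d+n}{n} = \binom{r-n+d}{n}$ — wait, the paper states $\binom{r-n+d+1}{n}$, so I would recount: the Hilbert polynomial of $S/\mathfrak{m}^{n+1}$ in the relevant $k$ variables, and I expect the correct count to be $\binom{(r-n+d+1-n)+n}{n} = \binom{r-n+d+1}{n}$ once the number of "middle" variables $k$ is correctly identified as $r-2n+d+2$ rather than $r-2n+d+1$ (an off-by-one in the index range $n-d, \ldots, r-n$ that must be pinned down carefully). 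This is the one place demanding genuine care.

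The main obstacle, I expect, is not any single deep theorem but rather the bookkeeping: correctly matching the index ranges between the generic Hankel determinantal ideal of the literature and the specific matrix (\ref{eq:hankelmatrix}), verifying that the truncation order $r+d+1$ (equivalently the hypothesis $r \geq 2n-d$) is exactly enough for Kronecker's theorem to characterize the variety without extra components or a drop in the expected minors, and ensuring the rescaling $c \leftrightarrow m$ interacts correctly with the antidiagonal term order so that the claimed initial ideal $\langle m_{n-d}, \ldots, m_{r-n}\rangle^{n+1}$ is literally what one obtains. Once those indices are nailed down, primeness, the Gröbner basis property, and the degree formula all follow from the classical theory of one-generic (Hankel) matrices.
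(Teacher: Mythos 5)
Your overall strategy is the same as the paper's: identify the parametrized variety with the rank-$\leq n$ locus of the Hankel matrix via the left-kernel/rational-function characterization, and import primeness, the Gr\"obner basis, and the degree from the commutative algebra of Hankel determinantal ideals. But there is a genuine gap in the middle step. You propose to deduce the case of general $d$ from the \emph{generic} Hankel matrix by observing that $c_{i+d}=\binom{d+i}{d}m_i$ is ``an invertible rescaling of coordinates together with setting the first $d$ of the $c$'s to zero,'' and then to transfer primeness, codimension, the Gr\"obner property and the initial ideal ``directly'' because ``a diagonal rescaling of variables preserves'' these. The rescaling part is harmless, but setting $c_0=\cdots=c_{d-1}=0$ is not a change of coordinates: it is a codimension-$d$ linear section of the generic Hankel determinantal variety, and none of primeness, expected codimension, nor the Gr\"obner basis property of a determinantal ideal survives an arbitrary linear section. (Already for $d=1$, $n=2$ the matrix (\ref{eq:3rows}) has a zero in the corner; it is a specialization, not a reparametrization, of the generic $3\times r$ Hankel matrix.) What rescues the argument is that the specialized matrix (\ref{eq:hankelmatrix}) is itself $1$-generic in Eisenbud's sense, so his results on linear sections of determinantal varieties apply \emph{directly to it} --- this is exactly what the paper invokes (\cite{Eis}, Section 4A, together with \cite{Conca} for the antidiagonal Gr\"obner basis of such ``extended'' Hankel matrices). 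You mention $1$-generic matrices only for your $d=0$ base case; you need to invoke that theory for (\ref{eq:hankelmatrix}) itself rather than reduce to the generic case by a coordinate change that does not exist.

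Two smaller points. First, your degree count goes astray and your proposed repair makes it worse: the number of variables appearing in the initial ideal $\langle m_{n-d},\ldots,m_{r-n}\rangle^{n+1}$ is indeed $k=r-2n+d+1$ (not $r-2n+d+2$); the multiplicity of $\mathfrak{m}^{n+1}$ for $\mathfrak{m}=\langle y_1,\ldots,y_k\rangle$ is the number of monomials of degree \emph{at most} $n$ in $k$ variables, namely $\binom{n+k}{n}=\binom{r-n+d+1}{n}$, not the number of monomials of degree exactly $n$. Second, for the set-theoretic identification, a full Kronecker-type ``iff'' for truncated series is more than you need and is delicate at the truncation boundary; the paper's route --- one containment from the left-kernel relation $b_nc_{i+d-n}+\cdots+b_0c_{i+d}=0$, then equality from irreducibility plus the dimension count $2n-d-1$ already established --- avoids that issue and is the cleaner way to finish.
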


The set-theoretic version of this theorem is implicit in the literature on polytopal moments
(cf.~\cite[Theorem~1]{GLPR}). We offer a proof based on results from commutative algebra.

\begin{proof}
Let $I$ be the ideal generated by the maximal minors of the matrix in (\ref{eq:hankelmatrix}).
The statement that $I$ is prime and has the expected codimension appears in \cite[Section 4A]{Eis}.
We fix the reverse lexicographic term order with
$m_0 > m_1 > \cdots > m_r$. The leading monomial of each maximal minor
of (\ref{eq:hankelmatrix}) is the product of the entries along the antidiagonal.
The ideal generated by all such antidiagonal products is the
$(n+1)$st power of the linear ideal $\langle m_{n-d},m_{n-d+1},\ldots,m_{r-n} \rangle$.
The codimension of that ideal equals the number $r-2n+d+1$
of occurring unknowns, and its degree is the number $\binom{r-n+d+1}{n}$
of monomials of degree $\leq n$ in these unknowns.
The Gr\"obner basis property for that term order follows from
\cite[Lemma 3.1]{Conca}. For an interesting refinement of that
Gr\"obner basis result see \cite[Corollary 3.9]{Nam}.

It remains to show that our moment variety $\mathcal{M}_{\{\!\{r\}\!\}}(d,n)$ equals the zero set of $I$.
Let $M(t)$ denote the formal power series on the left-hand side of (\ref{eq:Aseries}).
Fix a polynomial $\beta(t) = b_0 + b_1 t + b_2 t^2 + \cdots + b_n t^n$ with unknown
coefficients such that $\beta(t) M(t)$ is a polynomial of degree $n-d-1$. Hence the
coefficient of $t^i$ in $\beta(t) M (t)$ is zero for all integers $i \geq n-d$.
This constraint is a linear equation in $b = (b_n,b_{n-1},\ldots,b_1,b_0)$ whose
coefficients are the normalized moments $c_{j+d} = \binom{d+j}{j} m_j$. More precisely,
the equation for the coefficient of $t^i$ is
$$ b_n c_{i+d-n} + b_{n-1} c_{i+d-n+1} + \cdots + b_2 c_{i+d-2} + b_1 c_{i+d-1} + b_0 c_{i+d}  \,\,\, =\,\,\, 0 .$$
These equations for $i=n-d,n-d+1,\ldots,r$ are equivalent to the requirement that
the row vector $b$ is in the left kernel of the Hankel matrix (\ref{eq:hankelmatrix}).
Hence that matrix has rank $\leq n$ on  $\mathcal{M}_{\{\!\{r\}\!\}}(d,n)$. We conclude that 
  $\mathcal{M}_{\{\!\{r\}\!\}}(d,n)$ is contained in the variety of $I$.
We already saw that both are irreducible varieties of the same dimension.
Therefore, they are equal.
\end{proof}

\begin{remark} \rm
The recovery algorithm of \cite{GLPR} can be derived from the proof above.
For a given valid sequence of real moments, the Hankel matrix (\ref{eq:hankelmatrix})
has rank $n$. For such a matrix, we compute a generator
$b = (b_n, \ldots,b_1,b_0) $ of its left kernel.
The node points $u_1,\ldots,u_n$~are recovered as the roots of
$\beta(t) = \sum_{i=0}^n b_i t^i$. The numerator polynomial
in (\ref{eq:Aseries}) is found to be
$$ A_{n-d-1}(t) \quad = \quad \frac{1}{b_0} \sum_{\ell = 0}^{n-d-1}\biggl( \sum_{i=0}^{\ell} b_i c_{\ell+d-i} \biggr) \cdot t^\ell. $$
\end{remark}

It is  instructive to revisit Example \ref{ex:d1n2}  from the perspective of Theorem~\ref{th:Kathlen}.

\begin{example}[$d=1,n=2$] \rm \label{ex:einszwei}
The variety $\mathcal{M}_{\{\!\{r\}\!\}}(1,2)$ is the moment surface
in $\PP^r$ whose points represent the uniform probability distributions
on line segments in $\RR^1$. The prime ideal of this surface
is generated by the $3 \times 3$ minors of the $3 \times r$ Hankel matrix
\begin{equation}
\label{eq:3rows}
\left(\begin{array}{ccccccc} 
    0& m_0&2m_1&3m_2 & 4m_3 & \cdots & (r-1) m_{r-2}\\ 
    m_0& 2 m_1& 3 m_2 & 4 m_3 & 5 m_4 & \cdots & \quad\quad\;\;  r m_{r-1}\\
  2 m_1& 3 m_2& 4 m_3 & 5 m_4 &  6 m_5 &\cdots & (r+1) m_r\\
    \end{array}\right).
\end{equation}
These cubics form a Gr\"obner basis. The moment surface has degree $\binom{r}{2}$ in $\PP^r$.
Up to a factor of $4$, the leftmost $3 \times 3$ minor is equal to the cubic
 (\ref{eq:cubicmm})  whose surface is shown in Figure~\ref{fig:M13}.
\end{example}
   
 \section{Simplices}
 \label{sec4}
 
 In what follows we focus on the case $n=d+1$ when the
polytope $P$  is the $d$-simplex $\Delta_d$ with vertices
$x_k = (x_{k1},x_{k2}, \ldots,x_{kd})$ for $k=1,\ldots,d+1$.
From the normalized moment generating function in Lemma~\ref{lem:simplex}
we can derive the following explicit formula
for the moments of $\mu_{\Delta_d}$.

\begin{proposition}
\label{prop:explicitmI}
For $I = (i_1,\ldots,i_d) \in \NN^d$, the corresponding moment of the simplex equals
\begin{equation}
\label{eq:explicitmI} m_I(X) 
\,\, = \,\, \frac{i_1 ! \,i_2 !\, \cdots \, i_d ! \, d!}{(i_1{+}i_2{+}\cdots + i_d+d)!} \cdot \sum_{u}
\prod_{k=1}^{d+1} \frac{(u_{k1} {+} u_{k2} {+} \cdots {+} u_{kd})!}{u_{k1}! u_{k2}! \cdots u_{kd}!}
x_{k1}^{u_{k1}} x_{k2}^{u_{k2}} \cdots x_{kd}^{u_{kd}},
\end{equation}
where the sum is over nonnegative integer
$(d{+}1){ \times} d$   matrices $u$ with column sums given~by~$I$.
\end{proposition}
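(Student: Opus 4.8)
The plan is to extract this formula directly from the normalized moment generating function in Lemma~\ref{lem:simplex}. Starting from the product side of (\ref{eq:momentgenf}), I would expand each factor $\frac{1}{1-(x_{k1}t_1+\cdots+x_{kd}t_d)}$ as a geometric series $\sum_{j\geq 0}(x_{k1}t_1+\cdots+x_{kd}t_d)^j$, and then apply the multinomial theorem to each power: $(x_{k1}t_1+\cdots+x_{kd}t_d)^j = \sum_{|u_k|=j}\frac{j!}{u_{k1}!\cdots u_{kd}!}x_{k1}^{u_{k1}}\cdots x_{kd}^{u_{kd}}t_1^{u_{k1}}\cdots t_d^{u_{kd}}$, where $u_k=(u_{k1},\ldots,u_{kd})\in\NN^d$ and $|u_k|=u_{k1}+\cdots+u_{kd}=j$. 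Combining the series over $j$ with the multinomial expansion, the $k$th factor becomes $\sum_{u_k\in\NN^d}\frac{(u_{k1}+\cdots+u_{kd})!}{u_{k1}!\cdots u_{kd}!}x_{k1}^{u_{k1}}\cdots x_{kd}^{u_{kd}}t_1^{u_{k1}}\cdots t_d^{u_{kd}}$.

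Next I would multiply these $d+1$ series together over $k=1,\ldots,d+1$. The coefficient of $t_1^{i_1}\cdots t_d^{i_d}$ in the product is obtained by summing over all ways to write $I=(i_1,\ldots,i_d)$ as a sum of the exponent vectors $u_1,\ldots,u_{d+1}$; that is, over all nonnegative integer $(d+1)\times d$ matrices $u=(u_{kl})$ whose $l$th column sums to $i_l$. Each such matrix contributes $\prod_{k=1}^{d+1}\frac{(u_{k1}+\cdots+u_{kd})!}{u_{k1}!\cdots u_{kd}!}x_{k1}^{u_{k1}}\cdots x_{kd}^{u_{kd}}$. This is exactly the sum appearing on the right-hand side of (\ref{eq:explicitmI}).

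Finally, Lemma~\ref{lem:simplex} tells us that this coefficient of $t^I$ equals $\frac{(i_1+\cdots+i_d+d)!}{i_1!\cdots i_d!\,d!}\,m_I(X)$. Solving for $m_I(X)$ introduces the prefactor $\frac{i_1!\cdots i_d!\,d!}{(i_1+\cdots+i_d+d)!}$, which completes the derivation of the stated formula. Homogeneity of degree $|I|$ in the $x_{kl}$ is visible because every monomial $\prod_k x_{k1}^{u_{k1}}\cdots x_{kd}^{u_{kd}}$ has total degree $\sum_{k,l}u_{kl}=\sum_l i_l=|I|$, consistent with Lemma~\ref{lem:simplex}.

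There is no real obstacle here: the argument is a routine but careful bookkeeping of the geometric-series and multinomial expansions, combined with reading off coefficients. The only point requiring a little attention is the indexing convention for the convolution — making sure that ``column sums of $u$ equal $I$'' is the correct condition (as opposed to row sums), which follows from the fact that the $t_l$-exponent contributed by the $k$th factor is $u_{kl}$, so the total $t_l$-exponent is $\sum_k u_{kl}$, the $l$th column sum. One should also note that the sum over $u$ is finite for each fixed $I$, so all manipulations of formal power series are legitimate.
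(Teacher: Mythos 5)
Your derivation is correct and is exactly the one the paper intends: the proposition is stated without an explicit proof, only with the remark that it follows from the generating function in Lemma~\ref{lem:simplex}, and your expansion of each factor by the geometric series and the multinomial theorem, followed by extracting the coefficient of $t^I$ and solving for $m_I$, is precisely that derivation. Your attention to the column-sum (rather than row-sum) convention and to the finiteness of the sum for fixed $I$ is appropriate and complete.
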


Proposition \ref{prop:explicitmI} shows that $m_I$ is a
fairly complicated polynomial of degree $|I|$
in the $d^2+d $ entries of $X=(x_1,\dots, x_{d+1})^T$.
However, these polynomials are still simpler than the rational functions
we obtain for moments of polytopes other than simplices.
For instance, consider the subalgebra of $\RR[X]$ generated
by all moments $m_I(X)$ in (\ref{eq:explicitmI}) where $I$ runs over $ \NN^d$.
We shall argue in Section \ref{sec7} that this is the algebra
of multisymmetric polynomials \cite{Dal}.

 In this section we are interested in the polynomial relations
among the moments $m_I$ where $I$ runs over an appropriate
finite subset $\mathcal{A}$ of $\NN^d \backslash \{0\}$. We homogenize
these relations with the special unknown $m_{00 \cdots 0}$ 
that represents the total  mass of the simplex.
This gives us homogeneous polynomial relations among the moments
indexed by $\mathcal{A} \cup \{0\}$. Their zero set in $\PP^{|A|}$
is the moment variety $\mathcal{M}_{\mathcal A}(\Delta_d)$.
The special case $d=1$ and $\mathcal{A} = \{\!\{r\}\!\}$,
where our variety is a surface,
was seen in Example~\ref{ex:einszwei}.

We next present an algorithm for recovering the 
$(d+1) \times d$ matrix $X =(x_{kl})$ from the above moments $m_I$ of order 
 $|I| \leq d+1$. There are $\binom{2d+1}{d}$ such moments $m_I$.
Let $\mathbb{L}$ denote the sum of all terms 
on the right-hand side in (\ref{eq:momentgenf}) where
$ 1 \leq i_1 + i_2 + \cdots + i_d \leq d+1$.
This is a polynomial in $t_1,t_2,\ldots,t_d$ with zero constant term.
We compute the formal inverse:
\begin{equation}
\label{eq:Minverse}
(1+ \mathbb{L})^{-1} \,\, = \,\, 1 - \mathbb{L} + \mathbb{L}^2 - \mathbb{L}^3 +
\mathbb{L}^4 +  \,\cdots\, + (-1)^{d+1} \mathbb{L}^{d+1}  \quad
{\rm mod} \,\,\bigl\langle t_1,t_2,\ldots,t_d \bigr\rangle^{d+2}.  
\end{equation}
Thus $(1+\mathbb{L})^{-1}$ is a polynomial of degree $\leq d+1$ in $t_1,t_2,\ldots,t_d$
with constant term $1$. This polynomial must  factor into linear factors,
one for each vertex of the desired simplex:
\begin{equation}
\label{eq:factorization}
(1+\mathbb{L})^{-1} \quad = \quad
\prod_{k=1}^{d+1} \bigl(\,1 \,-\, x_{k1} t_1 - x_{k2} t_2 - \cdots - x_{kd} t_d \bigr).
\end{equation}
A necessary and sufficient condition for such a factorization 
to exist is that the coefficients of $(1+\mathbb{L})^{-1}$ satisfy {\em Brill's equations} 
\cite{Dal, Guan}.
These classical equations  characterize  polynomials
that are products of linear factors, among all
polynomials of degree $\leq d+1$ in $d$ variables.                   
We write $[d+1]$ for the set of vectors $I \in \NN^d$ with $|I| \leq d+1$.
Our discussion implies:

\begin{corollary} \label{cor:brill}
Homogeneous equations that define
$\mathcal{M}_{[d+1]}(\Delta_d)$ set-theoretically
are obtained by substituting the polynomials in $m_I$
on the left-hand side of (\ref{eq:factorization}) into Brill's equations.
\end{corollary}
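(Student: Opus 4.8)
The plan is to connect the parametrization of $\mathcal{M}_{[d+1]}(\Delta_d)$ from Lemma~\ref{lem:simplex} with Brill's classical characterization of products of linear forms. First I would observe that a point $(m_I : I \in [d+1])$ lies on the moment variety $\mathcal{M}_{[d+1]}(\Delta_d)$ if and only if there exists a $(d{+}1)\times d$ matrix $X$ whose moments of order $\le d+1$ are the given values (up to the usual scaling). By Lemma~\ref{lem:simplex}, the normalized generating function of these moments is $\prod_{k=1}^{d+1}\bigl(1 - x_{k1}t_1 - \cdots - x_{kd}t_d\bigr)^{-1}$, so truncating modulo $\langle t_1,\ldots,t_d\rangle^{d+2}$ recovers exactly $1 + \mathbb{L}$, where $\mathbb{L}$ is the degree-$\le d+1$ part built from the given moment values. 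The content of (\ref{eq:Minverse}) is that the formal inverse $(1+\mathbb{L})^{-1}$, computed in the quotient ring $\CC[t_1,\ldots,t_d]/\langle t_1,\ldots,t_d\rangle^{d+2}$, is a genuine polynomial of degree $\le d+1$ with constant term $1$; this is a finite, purely formal computation and needs no hypothesis on $X$.

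The key equivalence to establish is: a point of $\PP^{|[d+1]|}$ lies on $\mathcal{M}_{[d+1]}(\Delta_d)$ if and only if the polynomial $(1+\mathbb{L})^{-1}$ obtained from its coordinates factors as in (\ref{eq:factorization}) into $d+1$ linear factors each with constant term $1$. One direction is immediate: if the point comes from a matrix $X$, then Lemma~\ref{lem:simplex} gives $1+\mathbb{L}$ as the truncation of $\prod_k (1 - \langle x_k, t\rangle)^{-1}$, hence $(1+\mathbb{L})^{-1} = \prod_k (1 - \langle x_k,t\rangle)$ modulo the ideal; but both sides have degree $\le d+1$, so they are equal as polynomials. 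Conversely, if $(1+\mathbb{L})^{-1} = \prod_k(1 - \langle x_k,t\rangle)$ for some vectors $x_k$, then inverting shows $1+\mathbb{L}$ agrees with the generating function of the simplex on $X$ up to order $d+1$, so the moments $m_I$, $I \in [d+1]$, are precisely the moments of $\mu_{\Delta_d}$ for that $X$ — i.e.\ the point lies on the moment variety (including limiting/degenerate $X$, handled by taking Zariski closure, which matches the definition of $\mathcal{M}_{[d+1]}(\Delta_d)$). Finally, Brill's equations \cite{Dal, Guan} are, by definition, the homogeneous equations (in the coefficients of a degree-$(d{+}1)$ form in $d+1$ homogeneous variables, or equivalently a degree-$\le d+1$ inhomogeneous polynomial in $d$ variables) that vanish exactly on the locus of completely reducible forms. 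Substituting the explicit polynomial expressions for the coefficients of $(1+\mathbb{L})^{-1}$ — which are rational, indeed polynomial after clearing the power of $m_{0\cdots 0}$ coming from the inversion — into Brill's equations thus gives homogeneous equations cutting out $\mathcal{M}_{[d+1]}(\Delta_d)$ set-theoretically.

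The main obstacle, and the step requiring the most care, is the bookkeeping around homogenization and the constant term. Brill's equations characterize products of $d+1$ linear forms without any normalization, whereas in (\ref{eq:factorization}) each factor has constant term $1$; one must check that fixing the constant term of $(1+\mathbb{L})^{-1}$ to be $1$ is compatible with (and does not lose cases relative to) the scaling built into the projective coordinates $m_I$, and that the formal inversion (\ref{eq:Minverse}) introduces only a power of $m_{0\cdots 0}$ in the denominators, so that after clearing it the substituted Brill equations are genuinely homogeneous in the $m_I$. A further subtlety is that Brill's equations cut out the reducible locus only set-theoretically (they are known not to generate the prime ideal), which is why the corollary claims only a set-theoretic description; asserting more would be false. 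Once these normalization points are checked, the argument is a direct translation and the corollary follows from Lemma~\ref{lem:simplex} together with the defining property of Brill's equations.
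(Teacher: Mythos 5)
Your argument is correct and follows the same route as the paper: the paper's ``proof'' of Corollary~\ref{cor:brill} is precisely the preceding discussion of the truncated inverse $(1+\mathbb{L})^{-1}$ in (\ref{eq:Minverse}) and the factorization (\ref{eq:factorization}), combined with the defining property of Brill's equations. Your write-up simply makes explicit the equivalence and the normalization/homogenization bookkeeping that the paper leaves implicit.
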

 
If we are given numerical values in $\QQ$ for the moments $m_I$
then the factorization (\ref{eq:factorization}) is found in exact
arithmetic by the built-in factorization methods in
any computer algebra system, provided the vertex
coordinates $x_{kl}$ of our simplex are rational numbers.
If the moments $m_I$ are rational but the $x_{kl}$ are not rational
then they are algebraic over $\mathbb{Q}$, and one can use 
algorithms for {\em absolute factorization} to obtain the
right-hand side of (\ref{eq:factorization}). If the moments are floating point
numbers then one uses tools from
{\em numerical algebraic geometry}
(e.g.~the software {\tt Bertini} \cite{Bertini}) to obtain
an accurate  factorization purely numerically.

We now return to the problem of computing the prime ideal
of our variety $\mathcal{M}_{[d+1]}(\Delta_d)$.
In practise, the method in Corollary \ref{cor:brill} did not
work so well. In what follows, we discuss some
 techniques that we found more effective in obtaining relations among moments.

In all  computations, it helps to use the fact that
the ideal of $\mathcal{M}_{\mathcal{A}}(\cP)$
is homogeneous with respect to a
natural $\ZZ^{d+1}$-grading. 
On the unknown moments this grading is given~by
\begin{equation}
\label{eq:finegrading} {\rm degree}(m_{i_1 i_2 \cdots i_d}) \,\, = \,\,
(1,i_1,i_2,\ldots,i_d). 
\end{equation}
This follows from the parametric representation 
of the moment varieties given in (\ref{eq:momentgenf}).
Our first result concerns the case $d=2$, i.e., the ideal of a moment variety for triangles.

\begin{proposition} \rm
\label{prop:trimonvar}
The triangle moment variety $\mathcal{M}_{[3]}(\Delta_2)$
has dimension $6$ and degree $30$. It lives in the projective space $ \PP^9$.
Its prime ideal  is minimally generated by 
eight quartics and one sextic. 
The degrees of the nine ideal generators in the 
$\ZZ^3$-grading given in (\ref{eq:finegrading}) are 
$$ (4,2,3), (4,3,2), (4,2,4), (4,3,3), (4,3,3), (4,4,2),  (4,3,4), (4,4,3), (6,6,6). $$
\end{proposition}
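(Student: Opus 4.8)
The plan is to realize $\mathcal{M}_{[3]}(\Delta_2)$ as the Zariski closure of the image of the explicit polynomial parametrization of Proposition~\ref{prop:explicitmI}, and then to pin down its prime ideal, dimension and degree by a graded computation. Write $R = \RR[m_I : I \in \NN^2,\ |I|\le 3]$, a polynomial ring in the ten moment unknowns, and let $\phi\colon R \to \RR[x_{11},x_{12},x_{21},x_{22},x_{31},x_{32}]$ be the ring map sending each $m_I$ to the polynomial $m_I(X)$ of formula (\ref{eq:explicitmI}). Since the target is a domain, the ideal sought is the prime $\mathcal{I} = \ker\phi$, and the moment variety is $V(\mathcal{I}) \subset \PP^9$.

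First I would establish the dimension. The rational map $\CC^6 \dashrightarrow \PP^9$, $X \mapsto (m_I(X))$, is generically finite of degree $6$: by Corollary~\ref{cor:brill} together with (\ref{eq:factorization}), the moments of order $\le 3$ determine the polynomial $(1+\mathbb{L})^{-1}$ of degree $\le 3$, whose factorization into three linear forms with constant term $1$ is unique up to permuting the factors. Hence a generic fibre consists of exactly $|S_3| = 6$ points, so $\dim \mathcal{M}_{[3]}(\Delta_2) = 6$ and $R/\mathcal{I}$ has Krull dimension $7$. (As an independent check, the rational functions $m_{10}(X), m_{20}(X), m_{30}(X), m_{01}(X), m_{02}(X), m_{03}(X)$ are already algebraically independent, by a block-diagonal Jacobian argument: each depends on only one column of $X$ and reproduces a transcendence basis of the symmetric functions in three variables.)

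Next I would compute $\mathcal{I}$ one multidegree at a time, using the $\ZZ^3$-grading of (\ref{eq:finegrading}), for which $\phi$ is homogeneous. In multidegree $(a,b,c)$ one restricts $\phi$ to the finite-dimensional span of the monomials $m_{I_1}\cdots m_{I_a}$ with $I_1+\cdots+I_a = (b,c)$ and computes the kernel of a single linear map into a graded piece of $\RR[X]$; this reduces the whole calculation to small pieces of linear algebra. Running this through all multidegrees of total $m$-degree at most $6$ produces candidate minimal generators, and the assertion is that new generators arise exactly in the nine multidegrees listed --- eight of $m$-degree $4$ and one of $m$-degree $6$, with a two-dimensional space of new relations in multidegree $(4,3,3)$. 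Let $J\subseteq R$ be the ideal they generate. To conclude $J=\mathcal{I}$ I would: (i) check $\phi(f)=0$ for each of the nine polynomials, so $J\subseteq\mathcal{I}$; (ii) compute a Gr\"obner basis of $J$ and read off $\dim R/J = 7$ and $\deg R/J = 30$; and (iii) verify that $J$ is prime (equivalently, that $R/J$ is a domain), in particular checking that the sextic in multidegree $(6,6,6)$ does not already lie in the ideal generated by the eight quartics. Since $J$ is radical and has the same vanishing locus as the prime $\mathcal{I}$ --- an equality forced by the matching dimensions and the irreducibility of $V(J)$ --- we get $J = \mathcal{I}$. Minimality of the nine generators, hence the statement ``eight quartics and one sextic'', together with their multidegrees and the degree $30$, is then read off the graded minimal free resolution of $R/\mathcal{I}$ produced by the same computation.

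The substance of the argument lies in the symbolic computation of steps (ii)--(iii); the $\ZZ^3$-grading is exactly what keeps it feasible, since it breaks every linear-algebra and Gr\"obner step into small graded strata. The main obstacle is certifying that the nine polynomials generate the \emph{entire} prime ideal and not merely a subideal with the same zero set --- the lone sextic shows the quartics alone do not define the variety, so one genuinely needs the dimension--degree--primality verification to rule out missing relations in higher degree. A cleaner route that bypasses this bookkeeping is to compute $\mathcal{I}=\ker\phi$ outright in a computer-algebra system and read all the asserted invariants directly from the output, with the numerical portions validated by certified arithmetic as in \cite{Bertini}.
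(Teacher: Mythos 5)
Your proposal is sound and would establish the proposition, but it takes a genuinely different route from the paper. You compute the kernel of the parametrization $\phi$ directly, stratified by the $\ZZ^3$-grading, and then certify the answer a posteriori by a dimension count (your generically $6$-to-$1$ fibre argument via the factorization (\ref{eq:factorization}) is correct, as is the block-Jacobian check of algebraic independence of $m_{10},m_{20},m_{30},m_{01},m_{02},m_{03}$), a Gr\"obner degree computation, and a primality test for the candidate ideal $J$. The paper instead proves the result through the cumulant change of coordinates (the Cremona linearization of Proposition~\ref{prop:grass}): on the chart $\{m_{00}=1\}=\{k_{00}=0\}$ the variety is identified with an affine chart of the Grassmannian of lines in $\PP^4$, so the affine ideal is generated by the five Pl\"ucker quadrics written in cumulants (which become five of the eight quartics in moments), and the remaining generators are obtained by homogenizing and saturating with respect to $m_{00}$. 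That identification delivers irreducibility, primality, and the dimension $6$ structurally, with no separate \texttt{isPrime} verification, and it explains \emph{why} the ideal has this shape; the price is discovering the ten Pl\"ucker coordinates as polynomials in the cumulants. Your approach is more brute-force and its feasibility hinges entirely on the primality check in a ten-variable ring, but it is self-contained and requires no insight into the Grassmannian structure. The one step you should make explicit is that concluding $J=\mathcal{I}$ from ``$J$ prime, $J\subseteq\mathcal{I}$, equal dimensions'' is what you actually use; the phrase ``$J$ radical with the same vanishing locus'' presupposes $V(J)=V(\mathcal{I})$, which itself follows only after you invoke irreducibility of $V(J)$ and the dimension match.
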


\begin{proof}
This computation was carried out with the technique of cumulants, to be introduced below.
For an explicit example, the  ideal generator of degree $(4,2,3)$   equals
\begin{equation}
\label{eq:rel423} \begin{matrix}
3 m_{02} m_{10}^2 m_{01} - 6 m_{11} m_{10} m_{01}^2 + 
3 m_{20} m_{01}^3-m_{03} m_{10}^2 m_{00}
+4 m_{11}^2 m_{01} m_{00} +m_{21} m_{02} m_{00}^2 \\
-\,4 m_{20} m_{02} m_{01} m_{00} 
+ 2 m_{12} m_{10} m_{01} m_{00} -m_{21} m_{01}^2 m_{00}
 +m_{03} m_{20} m_{00}^2-2  m_{12} m_{11} m_{00}^2 .
\end{matrix} \quad
\end{equation}
We shall present the derivation by means of {\tt Macaulay2} in the proof of Proposition~\ref{prop:grass}.
\end{proof}

Logarithms turn products into sums, and this can greatly
simplify calculations. To do this in the context of probability and
statistics, one passes from moments to {\em cumulants}.
Let $\mathbb{M}$ be the generating function on the right of (\ref{eq:momentgenf}).
The associated {\em normalized cumulant generating function} is defined
as the formal logarithm via $\,{\rm log}(1+x) = 
x- \frac{1}{2}x^2 + \frac{1}{3} x^3 - \cdots $:
\begin{equation}
\label{eq:cumulantgenf}
\mathbb{K} \,\,\, = \,\,\, {\rm log}(\mathbb{M}) \quad = \quad
\sum_{i_1, \ldots,i_d \in \NN} 
\!\!\! \frac{(i_1 {+} i_2 {+} \cdots {+} i_d -1)!}{i_1 ! \,\, i_2 !\,\, \cdots\,\, i_d! } k_{i_1 i_2 \cdots i_d} t_1^{i_1} t_2^{i_2} \cdots t_d^{i_d}.
\end{equation}
Here $k_{00 \cdots 0} = 0$. By comparing the coefficients of monomials $t^I$ in this identity,
we obtain the expressions for each cumulant $ k_I$ as a polynomial in the~moments 
$m_J$ where $|J| \leq |I|$.  

\begin{example}[$d=2$] \rm
\label{ex:M2cumu}
Here are the formulas for the cumulants $k_I$ of order $|I| \leq 3$ in terms
of the moments $m_J$ of order $|J| \leq |I|$, written
in the language of {\tt Macaulay2} \cite{M2}:
\label{ex:KintermsofM}
\begin{verbatim}
S = QQ[m30, m21, m12, m03, m20, m11, m02, m10, m01, m00];
k01=3*m01; k02=12*m02-9*m01^2; k03=27*m01^3+30*m03-54*m01*m02; k10=3*m10; 
k11=12*m11-9*m01*m10; k12 = -36*m01*m11-18*m10*m02+30*m12+27*m10*m01^2; 
k20 = 12*m20-9*m10^2; k21 = -18*m01*m20-36*m10*m11+30*m21+27*m01*m10^2; 
k30 = 30*m30+27*m10^3-54*m10*m20;
\end{verbatim}
We shall revisit this piece of code shortly, to represent the ideal generators in Proposition~\ref{prop:trimonvar}.
\end{example}

The transformation (\ref{eq:cumulantgenf}) from moments
to cumulants is easily invertible. Namely, the moment generating function is the
exponential of the cumulant generating function:
$$ \mathbb{M} \,\,\,\, = \,\,\,\, {\rm exp}(\mathbb{K})\, \,\,=\,\,\,
 1 \,+\, \mathbb{K} \,+\, \frac{1}{2} \mathbb{K}^2 \,+\, \frac{1}{6} \mathbb{K}^3 \,+\,
  \frac{1}{24} \mathbb{K}^4  \,+  \,\cdots. $$
 This identity expresses each moment  $ m_I$ as a polynomial in the cumulants $ k_J$ 
 where $|J| \leq |I|$.
     
     The factorial multipliers in the generating function (\ref{eq:cumulantgenf}) 
     are chosen  so that the normalized cumulants 
     $k_{i_1 i_2 \cdots i_d}$ of a simplex $\Delta_d$ coincide with the standard 
    {\em power sum multisymmetric functions} \cite[\S~1.2]{Dal}
     in its vertices $x_1,\ldots,x_{d+1}$. 
     This  is the content of the following corollary.

\begin{corollary}
\label{cor:cumu}
The cumulants of the uniform probability distribution on the simplex $\Delta_d$  are 
\begin{equation}
\label{eq:cumulantsymm}
k_{i_1 i_2 \cdots i_d} \quad = \quad \sum_{j=1}^{d+1}
x_{j 1}^{i_1} x_{j 2}^{i_2} \cdots x_{j d}^{i_d}.
\end{equation}
\end{corollary}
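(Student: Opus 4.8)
The plan is to compute the cumulant generating function $\mathbb{K}$ of the simplex directly from its moment generating function $\mathbb{M}$, using the product formula in Lemma~\ref{lem:simplex}, and then extract the coefficients. By Lemma~\ref{lem:simplex}, the normalized moment generating function of the uniform distribution on $\Delta_d$ is
$$
\mathbb{M} \,\,=\,\, \prod_{k=1}^{d+1} \frac{1}{1 - (x_{k1} t_1 + x_{k2} t_2 + \cdots + x_{kd} t_d)}.
$$
Taking the formal logarithm turns this product into a sum:
$$
\mathbb{K} \,=\, {\rm log}(\mathbb{M}) \,=\, \sum_{k=1}^{d+1} \Bigl( - {\rm log}\bigl(1 - (x_{k1} t_1 + \cdots + x_{kd} t_d)\bigr) \Bigr) \,=\, \sum_{k=1}^{d+1} \sum_{m=1}^\infty \frac{1}{m} \bigl(x_{k1} t_1 + \cdots + x_{kd} t_d\bigr)^m.
$$

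Next I would expand each inner power by the multinomial theorem: for fixed $k$ and $m$,
$$
\bigl(x_{k1} t_1 + \cdots + x_{kd} t_d\bigr)^m \,=\, \sum_{|I| = m} \binom{m}{i_1, i_2, \ldots, i_d} x_{k1}^{i_1} \cdots x_{kd}^{i_d}\, t_1^{i_1} \cdots t_d^{i_d},
$$
where $\binom{m}{i_1,\dots,i_d} = \frac{m!}{i_1! \cdots i_d!}$. Substituting this and collecting the coefficient of a fixed monomial $t_1^{i_1} \cdots t_d^{i_d}$ with $|I| = m \geq 1$, the factor $\frac{1}{m}$ combines with $m!$ to give $(m-1)! = (i_1 + \cdots + i_d - 1)!$, so the coefficient of $t^I$ in $\mathbb{K}$ is
$$
\frac{(i_1 + \cdots + i_d - 1)!}{i_1! \, i_2! \cdots i_d!} \sum_{k=1}^{d+1} x_{k1}^{i_1} x_{k2}^{i_2} \cdots x_{kd}^{i_d}.
$$
Comparing this with the definition of the normalized cumulants in (\ref{eq:cumulantgenf}), where the coefficient of $t^I$ is $\frac{(i_1 + \cdots + i_d - 1)!}{i_1! \cdots i_d!} k_I$, the factorial prefactors cancel and we read off (\ref{eq:cumulantsymm}). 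Finally I would note the degenerate case $|I| = 0$ separately: the constant term of $\mathbb{M}$ is $1$, so the constant term of $\mathbb{K} = {\rm log}(\mathbb{M})$ is $0$, consistent with $k_{00\cdots 0} = 0$ as declared after (\ref{eq:cumulantgenf}).

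There is no serious obstacle here — the result is essentially a bookkeeping identity, and the only point requiring a little care is the interchange of the two summations (over $k$ and over $m$) and the extraction of a single monomial's coefficient, which is justified because everything takes place in the ring of formal power series $\RR[X][[t_1,\ldots,t_d]]$ where $\mathbb{M}$ has constant term $1$, so ${\rm log}(\mathbb{M})$ is a well-defined formal power series and the rearrangements are legitimate. The one conceptual remark worth making explicit is that the normalization constants $\frac{(i_1 + \cdots + i_d - 1)!}{i_1! \cdots i_d!}$ in (\ref{eq:cumulantgenf}) were chosen precisely so that this cancellation occurs and the cumulants become the honest power sum multisymmetric functions $p_I(x_1,\ldots,x_{d+1}) = \sum_{j} x_j^I$ in the sense of \cite[\S~1.2]{Dal}.
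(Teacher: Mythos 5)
Your proof is correct and follows essentially the same route as the paper: take the formal logarithm of the product formula in Lemma~\ref{lem:simplex}, expand each $-\log(1 - x_{k1}t_1 - \cdots - x_{kd}t_d)$, and match coefficients against the normalization in (\ref{eq:cumulantgenf}). Your version is simply more explicit about the multinomial expansion and the cancellation of the factor $\frac{(|I|-1)!}{i_1!\cdots i_d!}$, which the paper's terse proof leaves implicit.
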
    

\begin{proof}
Taking the logarithm of the left-hand side in (\ref{eq:momentgenf}), we
see that $\mathbb{K}$ is the sum of the expressions
$\, - {\rm log}\bigl(1 - x_{j 1} t_1 - x_{j 2} t_2 - \cdots - x_{j d} t_d \bigr)\,$
for $j=1,2,\ldots,d+1$. The coefficient of a non-constant
monomial $ t_1^{i_1} t_2^{i_2} \cdots t_d^{i_d}$ in 
the expansion of that expression equals
$\,x_{j 1}^{i_1} x_{j 2}^{i_2} \cdots x_{j d}^{i_d} $.
\end{proof}

\begin{remark} \rm
Both the moments (\ref{eq:explicitmI}) and the
cumulants (\ref{eq:cumulantsymm}) are multisymmetric functions in $x_1,x_2,\ldots,x_{d+1}$,
and they are expressible in terms of each other.  However, the
formula for the cumulants is much simpler than that for the moments.
For that reason, it seems advantageous to use cumulant coordinates
when studying the moment varieties of simplices.
\end{remark}

 Replacing moments with cumulants amounts to a change
 of coordinates in the affine~space
 $$ \mathbb{A}^{\binom{d+r}{d}-1} \quad = \quad \bigl\{ m_{00 \cdots 0} = 1 \bigr\} 
 \quad = \quad \bigl\{ k_{00 \cdots 0} = 0 \bigr\}. $$
This is the affine chart of interest inside the projective space 
 (\ref{eq:uptok}) which harbors $\mathcal{M}_{[r]}(\Delta_d)$.
 
 Ciliberto et al.~\cite{CCMRZ}  refer to this non-linear automorphism
    as a {\em Cremona linearization}.
In our situation, the Cremona linearization greatly simplifies the equations
that define $\mathcal{M}_{[r]}(\Delta_{d})$. We now illustrate this explicitly
for a simple case, namely for triangles ($d=2$) with $k=3$.
Here, Cremona linearization identifies our moment variety with a Grassmannian.

\begin{proposition} \label{prop:grass}
The restriction of the  $6$-dimensional triangle moment variety $\mathcal{M}_{[3]}(\Delta_2)$
in  $\PP^9$ to  the affine chart $\,\mathbb{A}^9 = \{m_{00} = 1 \} = \{k_{00} = 0\}$ 
can be identified with an affine chart of the $6$-dimensional Grassmannian of lines in $\PP^4$,
which  has its  Pl\"ucker embedding in~$\PP^9$.
\end{proposition}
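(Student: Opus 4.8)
The plan is to compute, in cumulant coordinates, the parametrization of $\mathcal{M}_{[3]}(\Delta_2)$ coming from Corollary~\ref{cor:cumu}, and then to recognize the resulting affine variety as an affine chart of $\mathrm{Gr}(2,5)$. By Corollary~\ref{cor:cumu}, on the chart $\{k_{00}=0\}$ the nine cumulants $k_{10},k_{01},k_{20},k_{11},k_{02},k_{30},k_{21},k_{12},k_{03}$ of a triangle with vertices $x_1,x_2,x_3\in\RR^2$ are precisely the power-sum multisymmetric functions $p_{i_1 i_2}=\sum_{j=1}^3 x_{j1}^{i_1}x_{j2}^{i_2}$ for $1\le i_1+i_2\le 3$. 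So the variety is the closure of the image of the map $\RR^{3\times 2}\dashrightarrow\RR^9$ sending $(x_1,x_2,x_3)$ to this vector of power sums. First I would write out this parametrization explicitly (six parameters, nine coordinates), confirming that the image is $6$-dimensional, which matches Proposition~\ref{prop:trimonvar}.

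Next I would exhibit the identification with the Grassmannian. The natural bijection is: a generic configuration of three points $x_1,x_2,x_3$ in $\RR^2$ (equivalently, in the affine chart $\{z_0=1\}$ of $\PP^2$) determines the three points $(1:x_{j1}:x_{j2})\in\PP^2$, hence — via the quadratic Veronese-type map sending a point to the vector of its degree-$\le 3$ monomials, i.e. $(1,x_{j1},x_{j2},x_{j1}^2,\dots)$ — three points on a rational normal curve, but that is $9$-dimensional and the wrong count. The correct bookkeeping is the classical one used for Cremona linearization of secant varieties (cf.\ \cite{CCMRZ}): the $\ZZ^3$-grading (\ref{eq:finegrading}) restricted to cumulants groups the nine coordinates into the two homogeneous blocks $(k_{10},k_{01})$, $(k_{20},k_{11},k_{02})$ and $(k_{30},k_{21},k_{12},k_{03})$ — of sizes $2,3,4$ — and after Cremona linearization the additive structure $k_I=\sum_j x_j^{I}$ turns the three-point secant condition into a \emph{linear} incidence condition. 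Concretely, I would set up a $2\times 5$ matrix whose rows encode the configuration and whose $\binom{5}{2}=10$ maximal minors are the Plücker coordinates in $\PP^9$, and match these ten minors (one of which is the "normalizing" coordinate playing the role of $m_{00}$) against the ten coordinates $1,k_{10},k_{01},\dots,k_{03}$, using the fact that a generic $2\times 5$ matrix of rank $2$ is, up to column operations in the relevant chart, determined by six free parameters, exactly matching the six entries of $X$.

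Having built the explicit correspondence, I would verify it is a well-defined isomorphism of affine varieties: (i) the ten expressions are the Plücker coordinates of an honest line in $\PP^4$, so the image lands in $\mathrm{Gr}(2,5)\subset\PP^9$; (ii) on the distinguished affine chart of $\mathrm{Gr}(2,5)$ where the minor corresponding to $k_{00}$ is nonzero, the map is invertible with rational inverse, recovering $x_1,x_2,x_3$ (up to relabeling) from the Plücker coordinates; (iii) dimensions agree ($6=6$) and both varieties are irreducible, so the image is dense in that affine chart and the closures coincide. I would close by remarking that pulling back the Plücker relations through the change of coordinates from cumulants to moments reproduces the eight quartics and one sextic of Proposition~\ref{prop:trimonvar} (the sextic arising because the coordinate change (\ref{eq:cumulantgenf}) has weighted degree, so a quadratic Plücker relation in the $k_I$ can become higher degree in the $m_J$), which doubles as a consistency check and as the promised \texttt{Macaulay2} derivation.

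\medskip
\noindent\textbf{Main obstacle.} The delicate part is getting the combinatorics of the identification exactly right — i.e.\ choosing the $2\times 5$ matrix and the correspondence between its ten minors and the ten coordinates $(k_{00},k_{10},\dots,k_{03})$ so that the induced parametrizations literally agree, not merely abstractly. The multigrading (\ref{eq:finegrading}) is the key constraint that pins down which minor must equal which cumulant, and checking that the six parameters of a generic $2\times 5$ matrix in the relevant chart map bijectively onto the six entries of $X$ (so that no spurious identifications or missing dimensions occur) is where the real content lies; once that dictionary is fixed, verifying the Plücker relations and the rationality of the inverse is routine.
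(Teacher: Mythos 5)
The framework you set up (cumulants as power sums via Corollary~\ref{cor:cumu}, the dimension count, irreducibility of both sides, pulling back the Pl\"ucker quadrics as a consistency check) is sound, but the central mechanism of your identification does not work, and the step you defer as ``the delicate part'' is in fact the entire content of the proof. You propose to choose a $2\times 5$ matrix whose ten maximal minors \emph{equal} the ten coordinates $(1,k_{10},k_{01},\dots,k_{03})$ --- you say the parametrizations should ``literally agree'' and ask ``which minor must equal which cumulant.'' No such matrix exists: if the cumulant vector of every triangle were literally the Pl\"ucker vector of a line in $\PP^4$, the cumulants would satisfy the five quadratic Pl\"ucker relations, i.e.\ there would be five independent (inhomogeneous) quadrics among $1,k_{10},\dots,k_{03}$ cutting out the $6$-dimensional affine variety. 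There are none: the ideal is $\ZZ^2$-homogeneous in the weights $\deg k_{ij}=(i,j)$, and a weight-by-weight check (e.g.\ in weights $(2,1)$ and $(2,2)$) shows the power sums of three planar points satisfy no quadratic relation; consistently, the minimal generators found in Proposition~\ref{prop:trimonvar} are quartics and a sextic. The weight bookkeeping you invoke as ``the key constraint'' actually exhibits the obstruction rather than resolving it: the ten Pl\"ucker coordinates that do the job have weights $(2,0),(1,1),(0,2),(3,0),(2,1),(2,1),(1,2),(1,2),(0,3),(2,2)$, which is not the multiset of cumulant weights (it contains $(2,2)$ and omits $(0,0),(1,0),(0,1)$), so no weight-preserving matching of minors to cumulants is possible.

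What the paper actually does is exhibit an explicit \emph{nonlinear} Cremona-type change of coordinates on the chart: the Pl\"ucker coordinates are specific polynomials of degree $2$, $3$ and $4$ in the cumulants, for instance $p_{01}=3k_{20}-k_{10}^2$, $\,p_{03}=9k_{21}+12k_{11}k_{10}-5k_{10}^2k_{01}$, and a quartic $p_{34}$. Substituting these into the five Pl\"ucker quadrics produces five quartics that generate the prime ideal of $\mathcal{M}_{[3]}(\Delta_2)\cap\mathbb{A}^9$, and homogenizing and saturating with respect to $m_{00}$ recovers all nine generators of Proposition~\ref{prop:trimonvar}. The proof is precisely this verified computation. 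Without producing such explicit expressions $p_{ij}(k)$ --- and your plan, as specified, would look for them in the wrong place, namely among linear coordinate matchings --- the argument is incomplete.
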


\begin{proof}
 We give the identification with the Grassmannian as
 {\tt Macaulay2} code, starting from Example~\ref{ex:M2cumu}.
 The following ten expressions in the cumulants serve
as Pl\"ucker coordinates:
\begin{verbatim}
p01 = 3*k20-k10^2;
p02 = 6*k11-2*k10*k01;
p03 = 9*k21+12*k11*k10-5*k10^2*k01;
p04 = 18*k30-24*k20*k10+6*k10^3;
p12 = 3*k02-k01^2;
p13 = 9*k12-6*k11*k01+6*k02*k10-k10*k01^2;
p14 = 18*k21-12*k11*k10+12*k20*k01-2*k10^2*k01;
p23 = 9*k03-12*k02*k01+3*k01^3;
p24 = 18*k12+24*k11*k01-10*k10*k01^2;
p34 = 72*k21*k01+72*k12*k10+9*k20*k02-9*k20*k01^2
    -9*k11^2+18*k11*k10*k01-9*k02*k10^2-16*k10^2*k01^2;
\end{verbatim}
We next form the ideal generated by the five quadratic Pl\"ucker relations:
\begin{verbatim}
I = ideal(p01*p23-p02*p13+p03*p12, p01*p24-p02*p14+p04*p12,
 p01*p34-p03*p14+p04*p13, p02*p34-p03*p24+p04*p23, p12*p34-p13*p24+p14*p23);
\end{verbatim}
The ideal {\tt I} now contains five of the eight quartics in Proposition \ref{prop:trimonvar}
starting with that of degree $(4,2,3)$ in (\ref{eq:rel423}).
These five quartics generate the prime ideal of
the affine variety $\mathcal{M}_{[3]}(\Delta_2) \cap \mathbb{A}^9$.
To pass to the projective closure in $\PP^9$ we now homogenize and saturate:
\begin{verbatim}
J = saturate(homogenize(I,m00),m00);
toString mingens J
\end{verbatim}
This displays all nine minimal generators of the homogeneous prime ideal  of $\mathcal{M}_{[3]}(\Delta_2)$.
\end{proof}

Using the cumulant coordinates, it is possible to derive defining equations for
$\mathcal{M}_{[r]}(\Delta_d)$ for $r \geq d+2$ in terms of the equations for $r=d+1$.
This is done by the following technique:

\begin{proposition} \label{prop:eachcumu}
For the uniform probability distribution on the simplex $\Delta_d$,
each cumulant $k_I$ of order $|I| \geq d+2$  is a polynomial 
in the cumulants $k_J$ of order $|J| \leq d+1$.
\end{proposition}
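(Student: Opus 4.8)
The plan is to use the special feature, particular to simplices, that the moment generating function $\mathbb{M}$ of (\ref{eq:momentgenf}) is the \emph{reciprocal of a polynomial of degree $d+1$}. Writing $\ell_j := x_{j1}t_1 + \cdots + x_{jd}t_d$, Corollary~\ref{cor:cumu} gives $\mathbb{K} = -\sum_{j=1}^{d+1}\log(1-\ell_j)$, hence $\mathbb{M} = \exp(\mathbb{K}) = Q(t)^{-1}$ with $Q(t) = \prod_{j=1}^{d+1}(1-\ell_j)$, a polynomial in $t = (t_1,\dots,t_d)$ of degree $d+1$. Expand $Q = \sum_{\ell=0}^{d+1}(-1)^\ell E_\ell(t)$ with $E_\ell$ homogeneous of degree $\ell$; the coefficients of $E_\ell$ are the elementary multisymmetric functions of $x_1,\dots,x_{d+1}$ of total degree $\ell \le d+1$. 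Likewise $\theta\mathbb{K} = \sum_{m\ge 1}P_m(t)$, where $\theta = \sum_i t_i\partial_{t_i}$ is the Euler operator and $P_m(t) = \sum_j \ell_j^{\,m}$; by (\ref{eq:cumulantsymm}) the coefficients of $P_m$ are precisely the cumulants $k_J$ with $|J| = m$.

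First I would apply $\theta$ to $\mathbb{K} = -\log Q$, obtaining the identity $Q\cdot(\theta\mathbb{K}) = -\theta Q$ between formal power series whose $t$-coefficients are multisymmetric functions. Since $-\theta Q = \sum_{\ell=1}^{d+1}(-1)^{\ell+1}\ell\,E_\ell(t)$ has degree $\le d+1$, extracting the homogeneous part of degree $N$ yields, for $N\ge d+2$, the multigraded Newton identity
$$ \sum_{\ell=0}^{d+1}(-1)^\ell E_\ell(t)\,P_{N-\ell}(t) \;=\; 0, \qquad\text{equivalently}\qquad P_N(t) \;=\; -\sum_{\ell=1}^{d+1}(-1)^\ell E_\ell(t)\,P_{N-\ell}(t), $$
and, for $1\le N\le d+1$, the low-degree identity $N\,E_N(t) = (-1)^{N-1}\sum_{\ell=0}^{N-1}(-1)^\ell E_\ell(t)\,P_{N-\ell}(t)$. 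Reading off the coefficient of a monomial $t^I$ in the first display expresses the cumulant $k_I$ (for $|I| = N \ge d+2$) as a polynomial in the coefficients of $E_1,\dots,E_{d+1}$ and in the cumulants $k_{I'}$ with $|I'| < N$; reading off coefficients in the second display expresses, recursively, each coefficient of each $E_N$ with $N \le d+1$ as a polynomial over $\QQ$ in the cumulants $k_J$ with $|J| \le d+1$ (the only divisions are by the integers $1,\dots,d+1$).

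Substituting the second family of relations into the first and inducting on $|I|$ — the base case $|I|\le d+1$ being vacuous — then gives $k_I \in \QQ[\,k_J : |J|\le d+1\,]$ for every $I$, which is the assertion. There is no real obstacle here: the one piece of genuine content is the structural observation that $\mathbb{M}^{-1}$ is a polynomial of degree exactly $n = d+1$, which is where the hypothesis that $P$ is a simplex is used (equivalently, the multiset $\{x_1,\dots,x_{d+1}\}$ of vertices is recovered from its multisymmetric power sums of degree $\le d+1$). The only things to be careful about are keeping track of the multigrading (\ref{eq:finegrading}) and checking that truncating the identities $\mathbb{M}=\exp(\mathbb{K})$ and $\mathbb{K}=\log\mathbb{M}$ at total degree $d+1$ makes moments and cumulants of order $\le d+1$ polynomially interchangeable over $\QQ$. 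I would close by remarking that this argument re-proves, in the language of (\ref{eq:cumulantgenf}), the classical fact (see \cite{Dal}) that the algebra of multisymmetric functions in $d+1$ vector variables is generated by the power sums of degree at most $d+1$.
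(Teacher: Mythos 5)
Your argument is correct and is essentially the paper's proof: both rest on the observation that, writing $X_k=\ell_k$ for the $d+1$ linear forms attached to the vertices, the power sums $\sum_k X_k^N$ with $N\ge d+2$ are polynomials in the first $d+1$ power sums (Newton's identities), and that comparing coefficients of $t^I$ via Corollary~\ref{cor:cumu} turns this into the desired polynomial expression for $k_I$. The only difference is cosmetic — you derive the multigraded Newton identities from $Q\cdot(\theta\mathbb{K})=-\theta Q$ and eliminate the elementary multisymmetric functions explicitly, whereas the paper simply cites Newton's identities to write $\sum_k X_k^\ell = P_\ell\bigl(\sum_k X_k,\ldots,\sum_k X_k^{d+1}\bigr)$.
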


\begin{proof}
We abbreviate $ X_k = x_{k1} t_1 + x_{k2} t_2 +  \cdots + x_{kd} t_d $ for $k=1,2,\ldots,d+1$.
For any $\ell \geq d+2$, we consider the power sum $X_1^\ell + X_2^\ell + \cdots + X_{d+1}^\ell$.
Using Newton's identities, we can write this uniquely as a polynomial $P_\ell$ with rational coefficients in
the first $d+1$ such power sums:
\begin{equation}
\label{eq:powersums}
 \sum_{k=1}^{d+1} X_k^\ell \,\,\,\, = \,\,\,\, P_\ell \biggl(\, \sum_{k=1}^{d+1} X_k^1\,,\,
\sum_{k=1}^{d+1} X_k^2\, ,\,\ldots, \sum_{k=1}^{d+1} X_k^{d+1} \biggr). 
\end{equation}
By Corollary~\ref{cor:cumu}, the left-hand side is the following polynomial of degree $\ell$ in $t_1,\ldots,t_d$:
$$ \sum_{k=1}^{d+1} X_k^\ell \,\,\,\, = \,\,\,\,  \sum_{I \,: \,|I| = \ell} \binom{|I|}{I}\, k_I \,t^I . $$
The same holds for the power sums occurring on the right-hand side of (\ref{eq:powersums}).
We expand the right-hand side and write it as a polynomial in $t_1,t_2,\ldots,t_d$.
Then each coefficient is a polynomial in the cumulants $k_J$ with $|J| \leq d+1$.
This gives the desired formula for $k_I$.
\end{proof}

\begin{example}[$d=2$] \rm 
The five fourth-order cumulants for a triangle in the plane $\RR^2$ admit the following 
polynomial expressions in terms of  the nine cumulants of lower order:
$$  \begin{matrix}
k_{04} &=&  k_{01}^4+3 k_{02}^2+4 k_{03} k_{01} - 6 k_{02} k_{01}^2, \\
k_{13} &=& -3 k_{02} k_{10} k_{01}+3 k_{11} k_{02}
+k_{03} k_{10}+ 3 k_{12} k_{01}-3 k_{11} k_{01}^2+k_{10} k_{01}^3 ,\\
k_{22} &=& k_{20} k_{02}  -k_{02} k_{10}^2+2 k_{11}^2+2 k_{12} k_{10}
+k_{10}^2 k_{01}^2+2 k_{21}  k_{01}-4 k_{11} k_{10} k_{01} -k_{20} k_{01}^2 , \\
k_{31} &=& k_{30} k_{01}+3 k_{20} k_{11}+3 k_{21} k_{10}
- 3 k_{11} k_{10}^2 - 3 k_{20} k_{10} k_{01} +k_{10}^3 k_{01}, \\
k_{40} &=& k_{10}^4+3 k_{20}^2+4 k_{30} k_{10}-6k_{20} k_{10}^2.
\end{matrix}
$$
These identities hold if we substitute
 $\,k_{ij} \, = \, x_{11}^i x_{12}^j \,+ \, x_{21}^i x_{22}^j \,+ \,x_{31}^i x_{32}^j $, so 
 they provide valid equations
 for $\mathcal{M}_{[4]}(\Delta_2)$ on the affine chart $\mathbb{A}^{14} = \{m_{00}  = 1\}$.
 To translate these equations into moment coordinates, we simply use the identities
 arising from $\, \mathbb{K} \, = \, {\rm log}(\mathbb{M})$, such as
$$ \begin{matrix} k_{04} &=& 60m_{04}- 72 m_{02}^2-81 m_{01}^4+216 m_{01}^2 m_{02} -120 m_{01} m_{03}, \\
k_{13} & = &\! 60 m_{13} {+} 108 m_{10} m_{01} m_{02} {-} 30 m_{10} m_{03}
{-} 81 m_{10} m_{01}^3 {+} 108 m_{01}^2 m_{11} {-} 90 m_{01} m_{12} {-} 72 m_{11} m_{02}.
\end{matrix}
$$
Consider the ideal generated by these polynomials in moments.
Just like in the end of the proof of Proposition \ref{prop:grass}, we 
homogenize and saturate  with respect to $m_{00}$. This yields
generators for the homogeneous prime ideal of
the triangle moment variety $\mathcal{M}_{[4]}(\Delta_2)$ in~$\PP^{14}$.
\end{example}

At this point, we note that all results in this section are valid
for configurations of $n \geq d+2$ points $x_1,x_2,\ldots,x_n$ in $\RR^d$,
but with the uniform measure on their convex hull replaced by a canonical polytopal measure.
Namely, consider the generating function on the left-hand side in (\ref{eq:momentgenf})
but with the upper index $n$ instead of $d+1$.
This is the normalized moment generating function for the probability measure
$\pi_*(\mu_{\Delta_{n-1}})$ on $\RR^d$ where $\pi$ denotes the canonical projection
from the simplex $\Delta_{n-1}$ onto the polytope $P  = {\rm conv}(x_1,x_2,\ldots,x_n)$:
$$
\prod_{k=1}^{n} \frac{1}{1 - (x_{k1} t_1 {+} x_{k2} t_2 {+} \cdots {+} x_{kd} t_d)} \,\,\, = \,
\sum_{i_1,i_2,\ldots,i_d \in \NN} \!\!\! \frac{(i_1 {+} i_2 {+} \cdots {+} i_d +n-1)!}{i_1 ! \, i_2 !\, \cdots\, i_d! \,(n-1) !}
m_{i_1 i_2 \cdots i_d} t_1^{i_1} t_2^{i_2} \cdots t_d^{t_d}. 
$$
The density function of
$\pi_*(\mu_{\Delta_{n-1}})$ is the canonical polytopal spline  supported on $P$.
This is piecewise polynomial of degree $n-d-1$ and differentiable of order $n-d-2$\; \cite{DCoPro}.

We consider the moments $m_I$ of order $|I | \leq r$ on the right-hand side above. These are polynomial functions
in the $nd$ unknowns $x_{kl}$.
Let  $\mathcal{I}_{r,d,n}$ denote the prime ideal of homogeneous polynomial relations
among these $\binom{r+d}{d}$ moments. For instance, 
the ideal $\mathcal{I}_{3,2,3}$ is the one with $9$ 
generators in $10$ unknowns seen  in Propositions \ref{prop:trimonvar}
and \ref{prop:grass}.

It would be interesting to compute the ideals $\mathcal{I}_{r,d,n}$
for as many values of $r$, $d$ and $n$ as possible, and to better understand their varieties.
For instance, the case $d=2$ and $n=4$
concerns the canonical piecewise linear densities on quadrilaterals in $\RR^2$.
It should be compared to the uniform distribution on  quadrilaterals,
to be studied in Section \ref{sec6}.

\smallskip

We conclude this section with a discussion of the tetrahedron $\Delta_3$. This has $12$ parameters,
namely the coordinates of the vertices $x_k = (x_{k1},x_{k2},x_{k3})$ for $k=1,2,3,4$.
We are interested in the moment variety $\mathcal{M}_{[3]}(\Delta_3)$ in 
$\PP^{19}$. Points on this variety represent  cubic surfaces in~$\PP^3$.
The coefficients of a cubic are specified by
the cumulants of the uniform distribution on $\Delta_3$:
$$ k_{ijl} \,\, = \,\,
x_{11}^i x_{12}^j x_{13}^l \,+\,
x_{21}^i x_{22}^j x_{23}^l \,+\,
x_{31}^i x_{32}^j x_{33}^l \,+\,
x_{41}^i x_{42}^j x_{43}^l 
\qquad \hbox{for $\,1 \leq i+j+l \leq 3 $.}
$$
We computed polynomials in the prime ideal of relations among these $19$ cumulants. 
This ideal is not homogeneous in the usual grading but it is homogeneous
in the $\ZZ^3$-grading given by ${\rm deg}(k_{ijl}) = (i,j,l)$. For a concrete example,
here is an ideal generator of degree $(3,2,2)$:
 \begin{tiny} $$ \begin{matrix}
k_{010}^2 k_{200} k_{102}+k_{100}^2 k_{020} k_{102}+k_{001}^2 k_{100} k_{110}^2+k_{010}^2 k_{100} k_{101}^2
-k_{100}^3 k_{002} k_{020}+k_{100}^2 k_{002} k_{120}+k_{100}^3 k_{011}^2+4 k_{110}^2 k_{102}
+2 k_{001} k_{100}^2 k_{020} k_{101} \\
+4  k_{101}^2 k_{120}+4 k_{011}^2 k_{300}
-2 k_{001} k_{010} k_{100} k_{101} k_{110}+2 k_{001} k_{010} k_{100} k_{011} k_{200}
-k_{010}^2 k_{100} k_{002} k_{200}-k_{001}^2 k_{100} k_{020} k_{200}-2 k_{010} k_{100}^2 k_{011} k_{101}
 \\ +2 k_{010} k_{100}^2 k_{002} k_{110}
 -2 k_{001} k_{100}^2 k_{011} k_{110} 
+10 k_{100} k_{011} k_{101} k_{110}+5 k_{100} k_{002} k_{020} k_{200}-2 k_{010} k_{100} k_{110} 
k_{102} 
+2 k_{010} k_{100} k_{101} k_{111}
\\ +2 k_{001} k_{100} k_{110} k_{111} 
-2 k_{001} k_{010} k_{200} k_{111} 
-2 k_{001} k_{100} k_{101} k_{120}+2 k_{010} k_{100} k_{011} k_{201}-2 k_{001} k_{100} k_{020} k_{201}
+2 k_{001} k_{010} k_{110} k_{201}\\ -2 k_{010} k_{100} k_{002} k_{210}
 +2 k_{001} k_{100} k_{011} k_{210}
+2 k_{001} k_{010} k_{101} k_{210}-2 k_{001} k_{010} k_{011} k_{300}+k_{001}^2 k_{200} k_{120}
-4 k_{002} k_{020} k_{300}+8 k_{002} k_{110} k_{210}\\ -8 k_{101} k_{110} k_{111} 
 -2 k_{100}^2 k_{011} k_{111}
-4 k_{002} k_{200} k_{120}+8 k_{011} k_{200} k_{111}-5 k_{100} k_{002} k_{110}^2-5 k_{100} k_{020} k_{101}^2 
+k_{010}^2 k_{002} k_{300} -4 k_{020} k_{200} k_{102} \\ -5 k_{100} k_{011}^2 k_{200}+8 k_{020} k_{101} k_{201}
-8 k_{011} k_{110} k_{201}-8 k_{011} k_{101} k_{210}+k_{001}^2 k_{020} k_{300}
-2 k_{001}^2 k_{110} k_{210}-2 k_{010}^2 k_{101} k_{201}.
\end{matrix}
$$ \end{tiny} 
Each relation among cumulants translates into a $\ZZ^4$-homogeneous relation among
the moments. The above polynomial translates into the
following polynomial of degree $(5,3,2,2)$:
\begin{tiny}
$$
\begin{matrix}
m_{000}^2 m_{002} m_{020} m_{300}-2 m_{000}^2 m_{002} m_{110} m_{210}
+m_{000}^2 m_{002} m_{120} m_{200}
-m_{000}^2 m_{011}^2 m_{300}+2 m_{000}^2 m_{011} m_{101} m_{210}
+2 m_{000}^2 m_{011} m_{110} m_{201} \\
-2 m_{000}^2 m_{011} m_{111} m_{200}-2 m_{000}^2 m_{020} m_{101} m_{201}+m_{000}^2 m_{020} 
m_{102} m_{200}
-m_{000}^2 m_{101}^2 m_{120}+2 m_{000}^2 m_{101} m_{110} m_{111} 
-m_{000}^2 m_{102} m_{110}^2 \\
 -2 m_{000} m_{001} m_{010} m_{110} m_{201}
 +2 m_{000} m_{001}^2 m_{110} m_{210} 
-m_{000} m_{001}^2 m_{120} m_{200}
+2 m_{000} m_{001} m_{010} m_{011} m_{300} -2 m_{000} m_{001} m_{010} m_{101} m_{210}
\\ -m_{000} m_{001}^2 m_{020} m_{300}
+2 m_{000} m_{001} m_{010} m_{111} m_{200}
-2 m_{000} m_{001} m_{011} m_{100} m_{210}+2 m_{000} m_{001} m_{020} m_{100} m_{201} 
+2 m_{000} m_{001} m_{100} m_{101} m_{120}
\\
-2 m_{000} m_{001} m_{100} m_{110} m_{111}
-m_{000} m_{002} m_{010}^2 m_{300}+2 m_{000} m_{002} m_{010} m_{100} m_{210}-5 m_{000} m_{002} m_{020} m_{100} m_{200} 
-m_{000} m_{002} m_{100}^2 m_{120}
\\ +2 m_{000} m_{010} m_{100} m_{102} m_{110}
+2 m_{000} m_{010}^2 m_{101} m_{201} 
-m_{000} m_{010}^2 m_{102} m_{200}-2 m_{000} m_{010} m_{011} m_{100} m_{201}
 -2 m_{000} m_{010} m_{100} m_{101} m_{111} \\
+5 m_{000} m_{002} m_{100} m_{110}^2
+5 m_{000} m_{011}^2 m_{100} m_{200}+2 m_{000} m_{011} m_{100}^2 m_{111}
-10 m_{000} m_{011} m_{100} m_{101} m_{110}-m_{000} m_{020} m_{100}^2 m_{102} \\
+5 m_{000} m_{020} m_{100} m_{101}^2 
+4 m_{001}^2 m_{020} m_{100} m_{200}-4 m_{001}^2 m_{100} m_{110}^2-8 m_{001} m_{010} m_{011} m_{100} m_{200}
+8 m_{001} m_{010} m_{100} m_{101} m_{110} \\ +8 m_{001} m_{011} m_{100}^2 m_{110}
 -8 m_{001} m_{020} m_{100}^2 m_{101} 
+4 m_{002} m_{010}^2 m_{100} m_{200}-8 m_{002} m_{010} m_{100}^2 m_{110}+4 m_{002} m_{020} m_{100}^3 \\
-4 m_{010}^2 m_{100} m_{101}^2+8 m_{010} m_{011} m_{100}^2 m_{101}-4 m_{011}^2 m_{100}^3.
\end{matrix} $$
\end{tiny}

Based on our computations, we propose the following conjecture.

\begin{conjecture}\label{conj:cummom}
Consider cumulants and moments of order $\leq 3$ for the uniform distribution
on a tetrahedron. They specify irreducible varieties of
dimension $12$ in $\mathbb{A}^{19}$ and $\PP^{19}$ 
respectively. The prime ideal for  cumulants has $44$ minimal generators. Their degrees are
\begin{tiny}
$$ 
\begin{matrix}
(223),(232),(322),
(134),(143),(314),(341),(413),(431),
(224),(224),(242),(242),(422),(422),
(332),(332),(332),(323),(323),(323),(233),\\(233),(233), 
(144),(414),(441),(333),(333),(225),(252),(522),
(234),(234),(243),(243),(324),(324),
(342),(342),(423),(423),(432),(432).
\end{matrix}
$$
\end{tiny}
The prime ideal for moments has $93$ minimal generators, namely
$90$ quintics and $3$ sextics.
\end{conjecture}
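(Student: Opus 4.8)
\medskip\noindent\emph{Proof proposal.} The plan is to realize both varieties as Zariski closures of images of explicit polynomial parametrizations, deduce dimension and irreducibility from this, and then certify the two lists of ideal generators by exact Gr\"obner basis computations organized along the fine $\ZZ^3$-grading $\deg(k_{ijl}) = (i,j,l)$. By Corollary~\ref{cor:cumu}, the nineteen cumulants $k_{ijl}$ with $1 \le i+j+l \le 3$ equal the power-sum multisymmetric functions of the four vertices $x_1,x_2,x_3,x_4 \in \CC^3$, so we obtain a polynomial map $\phi\colon \CC^{12} \to \CC^{19}$, $X \mapsto \bigl(k_{ijl}(X)\bigr)$, whose image closure is the cumulant variety. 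Irreducibility is automatic since $\CC^{12}$ is irreducible; the dimension equals $12$ once one checks that the $19 \times 12$ Jacobian of $\phi$ has rank $12$ at a single rational point $X$, i.e.\ that $\phi$ is generically finite onto its image. For moments, the substitution $\mathbb{M} = \exp(\mathbb{K})$ is the Cremona linearization of \cite{CCMRZ}, a polynomial automorphism of the affine chart $\{m_{00\cdots 0}=1\} = \{k_{00\cdots 0}=0\}$ of $\PP^{19}$; hence the moment variety is again irreducible of dimension $12$ on that chart, and its Zariski closure in $\PP^{19}$ retains both properties.

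For the cumulant ideal, set $P = \ker\bigl(\QQ[k_{ijl}] \to \QQ[x_{kl}]\bigr)$; this is prime by construction and $\dim V(P)=12$ by the Jacobian computation. Let $J$ be the ideal generated by the $44$ polynomials of the listed $\ZZ^3$-multidegrees (the displayed polynomial of multidegree $(3,2,2)$ is one of them). One checks $J \subseteq P$ by substituting the power sums $k_{ijl}(X)$ for the variables $k_{ijl}$; thanks to the fine grading, only one multigraded component of each generator can survive, so this is a short finite verification. To prove $J = P$ I would either (i) compute a Gr\"obner basis of $P$ directly by eliminating the twelve variables $x_{kl}$ from $\bigl\langle k_{ijl} - k_{ijl}(X)\bigr\rangle$ and then extract a minimal generating set, or (ii) compute the fine multigraded Hilbert function of $\QQ[x]/P$ from the parametrization $\phi$ and compare it with that of $\QQ[k]/J$ --- agreement, together with $J \subseteq P$, forces $P/J = 0$. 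Either route then yields the minimal generators and their $\ZZ^3$-degrees, confirming the list of $44$, and in particular that none of them is redundant.

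The moment ideal is obtained from the cumulant ideal --- or, equivalently, computed directly by eliminating the $x_{kl}$ from $\bigl\langle m_{ijl} - m_{ijl}(X)\bigr\rangle$ --- by the post-processing already used in the proof of Proposition~\ref{prop:grass} and in the $d=2$ example that follows it: substitute the polynomial expressions $k_{ijl} = k_{ijl}(m)$ coming from $\mathbb{K} = \log\mathbb{M}$, homogenize the resulting ideal with respect to $m_{000}$, and saturate at $m_{000}$. The saturation is what alters the minimal generating set --- it accounts for the jump from $44$ to $93$ generators --- so one must compute a minimal generating set of the final homogeneous prime ideal and verify that it consists of exactly $90$ quintics and $3$ sextics, via a \texttt{mingens} or minimal-free-resolution computation.

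The main obstacle is purely computational: route (i) requires a Gr\"obner basis in $31$ variables, and the saturation in $\PP^{19}$ over $\QQ$ is heavy, so carrying these computations to completion and certifying their output is the real content of a proof. Since all arithmetic is exact over $\QQ$, the result is rigorous in principle, but a fully convincing argument should reproduce the computations in independent systems, or replace step (i) by the Hilbert-function comparison in (ii), whose only nontrivial ingredient is the multigraded Hilbert series of the image algebra $\QQ[x]/P$. The most delicate part is the bookkeeping: confirming minimality of both generating sets --- that no one of the $44$, respectively $93$, listed generators lies in the ideal generated by the others --- and matching the exact lists of multidegrees.
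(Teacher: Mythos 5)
This statement is labeled a \emph{conjecture} in the paper, and the authors offer no proof of it: they write only that it is proposed ``based on our computations,'' after exhibiting a single degree-$(3,2,2)$ generator and its translation into moments. So there is no paper proof to compare yours against, and the honest assessment is that your proposal does not close the gap either --- by your own admission, ``carrying these computations to completion and certifying their output is the real content of a proof,'' and that content is not supplied. What you have written is a computational roadmap, and it is essentially the same roadmap the authors evidently followed partway: pass to cumulant coordinates via Corollary~\ref{cor:cumu}, exploit the fine $\ZZ^3$-grading, and translate back to moments by $\mathbb{K}=\log\mathbb{M}$ followed by homogenization and saturation at $m_{000}$, exactly as in the proof of Proposition~\ref{prop:grass}. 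The roadmap is sound, but the statement remains unproven both in the paper and in your write-up.

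Two further cautions about the plan itself. First, your route for the cumulant ideal as stated is circular at one point: you ``let $J$ be the ideal generated by the $44$ polynomials of the listed multidegrees,'' but those polynomials are not given (only one of them appears in the paper), so they can only be produced by the very elimination or Hilbert-function computation whose feasibility is in question; moreover $J\subseteq P$ together with equality of dimensions does not force $J=P$ unless you also know $J$ is unmixed or prime, so only your options (i) and (ii) are actually conclusive. Second, for the moment ideal, passing from the affine chart to $\PP^{19}$ via saturation genuinely changes the minimal generators (this is where $44$ becomes $93$), and verifying that the saturation terminates with exactly $90$ quintics and $3$ sextics is a substantial exact computation in $20$ variables that neither you nor the authors report having certified. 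Until those computations are carried out and verified, the correct status of this statement is: open conjecture, supported by partial symbolic evidence.
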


We shall return to the $90$ ideal generators of degree five in Proposition \ref{prop:tetra}.

\section{Symmetry and Invariants}
\label{sec5}

In this section we study the symmetries arising from the  group 
of affine transformations:
$$ {\rm Aff}_d \,\,:= \,\, \mathbb{R}^d \rtimes \mathrm{GL}_d(\RR). $$
This group is a subgroup of ${\rm GL}_{d+1}(\RR)$. It acts on column vectors $x = (x_1,\ldots,x_d)^T$ via
\begin{equation}
\label{eq:affineaction}
\begin{pmatrix}  x \\ 1 \end{pmatrix}
\,\,\mapsto\,\, \begin{pmatrix} A & b \\  0 & 1 \\ \end{pmatrix} \begin{pmatrix} x \\ 1 \\ \end{pmatrix}, 
\end{equation}
where $A = (a_{ij})$ is an invertible $d \times d$-matrix and
$b = (b_i)$ is a column vector in $\RR^d$.
This group acts naturally on the space of
realizations of a polytope type $\mathcal{P}$.
The action (\ref{eq:affineaction})
also induces an action on monomials and hence an action on moments $m_I$, $I \in \NN^d$.
Explicitly, 
\begin{equation}
\label{eq:affmom}
 m_{I} \,\,\mapsto \,\,\sum_{J} \nu_{IJ} \cdot m_{J}, 
 \end{equation}
where $\nu_{IJ} = \nu_{IJ}(A,b)$ is the coefficient of 
the monomial $x^J $ in the expansion of $(A x + b)^I$.
The sum in (\ref{eq:affmom}) is over all $J \in \NN^d$
such that $|J| \leq |I|$. Here are formulas for two simple~cases.

\begin{example}[$d=1$] \rm
The group ${\rm Aff}_1$ acts on the real line $\RR^1$ via $x \mapsto ax + b$,
where $a,b \in \RR$ with $a \not = 0$. Under this action, the $i$-th moment 
of a probability measure on $\RR^1$ is transformed into the following linear combination
of all moments of order at most $i$:
\begin{equation}
\label{eq:action1d}
 m_i \,\, \mapsto \,\, \sum_{j=0}^i \binom{i}{j} a^j b^{i-j} m_j. 
 \end{equation}
\end{example}

\begin{example}[$d=2$] \rm
The moments of order $\leq 2$ are the entries of
the symmetric matrix
$$ M \,= \, \begin{pmatrix}
m_{20} & m_{11} & m_{10} \\
m_{11} & m_{02} & m_{01} \\
m_{10} & m_{01} & m_{00} \end{pmatrix}.
$$
The upper left $2 \times 2$ block is the covariance matrix.
The group $ {\rm Aff}_2$ consists of $3 \times 3$ matrices
$$ 
Ab \,\, = \,\,
\begin{pmatrix}
a_{11} & a_{12} & b_1 \\
a_{21} & a_{22} & b_2 \\
0 & 0 & 1 \end{pmatrix}.
$$
With this matrix notation for $|I| \leq 2$, the action (\ref{eq:affineaction})   takes the form
$\, M \,\mapsto \, Ab \cdot M \cdot Ab^T$.
More generally, if we consider all moments of order $\leq r$ then
we can write these as an 
$r$-dimensional symmetric tensor of format $3 \times 3 \times \cdots \times 3$.
The action (\ref{eq:affineaction}) is given by multiplication of this tensor on all its $r$ sides
by the $3 \times 3$ matrix $Ab$. 
\end{example}

We have identified the space of moments of order $\leq r$ with the projective space
$ \PP^{\binom{d+r}{d}-1}$.
The formula (\ref{eq:affineaction}) defines a  linear action
of the group ${\rm Aff}_d$ on that projective space. 
Recall that, for each simplicial polytope
$P$ in $\RR^d$ and each subset $\mathcal{A} \subset \mathbb{N}^d$ with $0 \notin \mathcal{A}$, 
its associated moment variety $\mathcal{M}_{\mathcal{A}}(\cP)$
is a projective variety in $\PP^{|\mathcal{A}|}$.
In particular, if $\mathcal{A}$ is the index set
$\,[r]\, =\, \lbrace I \in \mathbb{N}^d	\mid 1 \leq |I| \leq r \rbrace$, 
then the moment variety $\mathcal{M}_{[r]}(\cP)$ lives in $ \PP^{\binom{d+r}{d}-1}$,
as in (\ref{eq:uptok}).

\begin{lemma}
\label{lem:actionOnMomentVarieties}
The moment variety $\mathcal{M}_{[r]}(\cP)$ of
a simplicial polytope $P$ in $\RR^d$ is invariant under the action of the group ${\rm Aff}_d$
of affine transformations on  the projective space $ \PP^{\binom{d+r}{d}-1}$.
\end{lemma}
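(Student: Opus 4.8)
The plan is to show that the action of ${\rm Aff}_d$ on $\PP^{\binom{d+r}{d}-1}$ sends the parametrizing family of moment vectors to itself, so that the closure of the image — which is $\mathcal{M}_{[r]}(\cP)$ — is carried to itself. Concretely, fix an affine transformation $g\colon x \mapsto Ax+b$ with $A \in \mathrm{GL}_d(\RR)$ and $b \in \RR^d$, and fix a realization $X \in \RR^{n\times d}$ of the combinatorial type $\cP$, with associated polytope $P = P(X)$. First I would observe that $g(P)$ is again a polytope of the same combinatorial type $\cP$, since affine maps preserve the face lattice; its realization matrix $gX$ has rows $(gx_k)$, and $gX$ again lies in the semialgebraic set of valid realizations of $\cP$.

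Next I would relate the moments of $\mu_{g(P)}$ to those of $\mu_P$. Since $g$ is affine and $\mu_P$ is the uniform (normalized) measure on $P$, the pushforward $g_*\mu_P$ is exactly the uniform probability measure $\mu_{g(P)}$ on $g(P)$. Therefore, by the change-of-variables / substitution formula for expectations,
\[
m_I\bigl(g(P)\bigr)
\,=\,\int_{\RR^d} w^I \, {\rm d}\mu_{g(P)}
\,=\,\int_{\RR^d} (Ax+b)^I \, {\rm d}\mu_P(x)
\,=\,\sum_{|J| \le |I|} \nu_{IJ}(A,b)\, m_J(P),
\]
which is precisely the linear substitution (\ref{eq:affmom}). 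Thus the linear map on $\PP^{\binom{d+r}{d}-1}$ defined by the matrix of coefficients $\nu_{IJ}(A,b)$ (together with $m_0 \mapsto m_0$, since $g_*$ preserves total mass) sends the point $\bigl(m_I(P) : I \in [r]\cup\{0\}\bigr)$ to the point $\bigl(m_I(g(P)) : I \in [r]\cup\{0\}\bigr)$. In other words, the linear ${\rm Aff}_d$-action of the preceding paragraphs agrees, on actual moment vectors, with the geometric action $P \mapsto g(P)$ on realizations.

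Finally I would assemble these facts. The moment variety $\mathcal{M}_{[r]}(\cP)$ is, by definition, the Zariski closure of the image of the rational map sending a realization $X$ (of type $\cP$) to its moment vector $\bigl(m_I(X)\bigr)_{I \in [r]\cup\{0\}}$. By the previous step, for $g \in {\rm Aff}_d$ the induced linear automorphism of $\PP^{\binom{d+r}{d}-1}$ maps this image into itself: it sends the moment vector of $X$ to the moment vector of $gX$, which is again a valid realization of $\cP$. A linear automorphism is in particular a homeomorphism in the Zariski topology, hence it maps the closure of the image into the closure of the image; applying the same argument to $g^{-1}$ gives equality. Therefore $\mathcal{M}_{[r]}(\cP)$ is invariant under every $g \in {\rm Aff}_d$, which is the claim.

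I expect the main (minor) obstacle to be bookkeeping rather than substance: one must check carefully that the substitution (\ref{eq:affmom}), derived abstractly from expanding $(Ax+b)^I$, really is a \emph{linear} automorphism of the full moment space of order $\le r$ — i.e. that $\nu_{IJ}(A,b) = 0$ whenever $|J| > |I|$ (so the action is block upper-triangular and well-defined on truncated moments) and that invertibility follows from $\nu_{IJ}(A^{-1}, -A^{-1}b)$ providing the inverse. These points are routine consequences of the multinomial expansion and of $g \mapsto g_*$ being a group action, but they are what makes the closure argument legitimate; everything else is immediate from Theorem~\ref{thm:nmgf_polytope} and the definition of $\mathcal{M}_{[r]}(\cP)$.
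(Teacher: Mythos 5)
Your proof is correct and is essentially the paper's argument, namely that the moment map from realizations of type $\cP$ to $\PP^{\binom{d+r}{d}-1}$ is equivariant for the two ${\rm Aff}_d$-actions, so the parametrized image (and hence its Zariski closure) is preserved. You simply spell out the equivariance via the change-of-variables identity and the closure/invertibility bookkeeping that the paper leaves implicit.
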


\begin{proof}
The group ${\rm Aff}_d$ acts on $\PP^{\binom{d+r}{d}-1}$, and it
also acts on the space of all realizations $P$ of a given  combinatorial type $\cP$.
The map that takes a specific simplicial polytope $P$ to its point in the
variety $\mathcal{M}_{[r]}(\cP)$ is equivariant with respect to
the two actions, i.e.~the image of $P$ under an affine transformation
is mapped to the image of its moment vector under the same transformation.
This implies that $\mathcal{M}_{[r]}(\cP)$  is invariant under 
the action by ${\rm Aff}_d$.
\end{proof}


In cases where our moment variety is a hypersurface 
in $ \PP^{\binom{d+r}{d}-1}$,  its defining equation 
is a polynomial  that is invariant under ${\rm Aff}_d$.
It is thus of interest to study the {\em invariant ring}
\begin{equation}
\label{eq:invariantring}
 \RR \bigl[\, \,m_I \,: \,|I| \leq r\,\, \bigr]^{{\rm Aff}_d}. 
 \end{equation}
 Here and in what follows we use the term {\em invariant}  for
relative invariants, i.e. such that the transformation of an invariant polynomial equals the
original polynomial times a power of ${\rm det}(A)$. In other words,
an {\em invariant} is an absolute invariant of the subgroup ${\rm Aff}_d \cap {\rm SL}_{d+1}(\RR)$.

\begin{example}[$d=1,r=3$] \label{ex:invbinarycubic} \rm
The group ${\rm Aff}_1$ acts on
the polynomial ring $\RR[m_0,m_1,m_2,m_3]$ via (\ref{eq:action1d}).
The invariant ring has four generators, in degrees
$(1,1)$, $(2,2)$, $(3,3)$ and $(4,6)$:
$$
\begin{matrix}
a = m_0 \,, \quad
b = m_0 m_2-m_1^2 \, , \quad
c = m_0^2 m_3-3 m_0 m_1m_2+2 m_1^3, \\
d \,=\, m_0^2 m_3^2-6m_0 m_1 m_2 m_3+4 m_0 m_2^3+4 m_1^3 m_3-3 m_1^2 m_2^2.
\end{matrix}
$$
We verified the equality $\RR[m_0,m_1,m_2,m_3]^{{\rm Aff}_1} = \RR[a,b,c,d]$, see  Theorem \ref{thm:covariants}
below.
Note that $b$ and $d$ are the discriminants of binary forms of degree two and three.  The four invariants
satisfy the relation $ \,a^2 d - 4 b^3 - c^2 \,= \, 0$. This expresses the discriminant
$d$ in terms of the other three invariants on the affine open set $\{m_0 = 1\}$.
The invariant $c$ is the one of interest to us. The cubic surface it
defines in $\PP^3$ is given by (\ref{eq:cubicmm}) and shown in 
Figure \ref{fig:M13}.
\end{example}

Once we know generators for this invariant ring (\ref{eq:invariantring}),
 we can try to express our hypersurface
as a polynomial in these fundamental invariants.
Note that Hilbert's theorem on finite generation
does not directly apply  here, because
the group ${\rm Aff}_d$ is not reductive.
However, there is a nice method from
classical invariant theory using {\em covariants}, which allows us to conclude
finite generation and to compute the invariants of ${\rm Aff}_d$ we are interested in.

Set $V = \RR^{d+1}$, with standard basis denoted by $\{u_1,u_2,\ldots,u_{d+1}\}$. 
We identify the symmetric power $S_r(V)$ with our
space of moments $m_I$ of order $|I|$ at most $r$.
The action of the general linear group $G = {\rm GL}_{d+1}(\RR)$ on
   $S_r(V)$ restricts to the action of the affine group ${\rm Aff}_d$ on moments.
 The group $G$ acts naturally on the dual space $V^*$. 
 We consider the action of $G$ on the direct sum
$S_r (V) \oplus V^*$, and the induced action on the polynomial ring
\begin{equation}
\label{eq:SkVVdual}
 \RR[ S_r(V) \oplus V^*] \,\,\,= \,\,\,
\RR \bigl[ m_I \,: \,|I| \leq r \bigr] \,\otimes_\RR \, \RR[ u_1,u_2,\ldots,u_{d+1}\,]. 
\end{equation}
A $G$-invariant in this polynomial ring is known as a {\em covariant}.
Thus $\, \RR[ S_r(V) \oplus V^*]^G \,$ is the ring of covariants of
$S_r(V)$. This ring is finitely generated because $G$ is reductive.

Let $\psi$ be the ring epimorphism $\,\RR[S_r(V) \oplus V^*] \,\rightarrow \RR[ S_r(V) ]\,$
defined by $u_{d+1} \mapsto 1$ and $u_i \mapsto 0$ for $i =1,2,\ldots,d$.
This reflects the special role played by the last index 
in the realization of ${\rm Aff}_d$ as a subgroup of $G$.
The following result is known in classical invariant~theory.

\begin{theorem} \label{thm:covariants}
The map $\psi$ induces an isomorphism between covariants and affine invariants:
\begin{equation}
\label{eq:covariants} \RR[\,S_r(V) \oplus V^*\,]^G \,\,\simeq \,\, \RR[ S_r(V) ]^{{\rm Aff}_d}. 
\end{equation}
\end{theorem}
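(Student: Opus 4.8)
The plan is to recognize $\mathrm{Aff}_d$ as the stabilizer in $G={\rm GL}_{d+1}(\RR)$ of the covector $\ell\in V^*$ dual to $u_{d+1}$, so that the homomorphism $\psi$ is simply evaluation, in the $V^*$-factor, at the point $\ell$. Indeed the block matrices $\left(\begin{smallmatrix} A & b \\ 0 & 1\end{smallmatrix}\right)$ are exactly the $g\in G$ with $\ell\circ g=\ell$; hence the affine subspace $S_r(V)\times\{\ell\}$ of $S_r(V)\oplus V^*$ is stable under $\mathrm{Aff}_d$, and restricting a function to it carries a $G$-covariant to an $\mathrm{Aff}_d$-invariant. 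The $\det$-weights match because ${\rm det}\left(\begin{smallmatrix}A&b\\0&1\end{smallmatrix}\right)={\rm det}(A)$. Thus $\psi$ maps $\RR[S_r(V)\oplus V^*]^G$ into $\RR[S_r(V)]^{\mathrm{Aff}_d}$; it is visibly a ring homomorphism, and it remains to prove bijectivity.

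For injectivity, I would use that the $G$-orbit of $\ell$ under the natural action on $V^*$ is the set $V^*\setminus\{0\}$ of all nonzero covectors, which is Zariski dense in $V^*$ with complement of codimension $d+1\ge 2$. Consequently the $G$-saturation of $S_r(V)\times\{\ell\}$ is $S_r(V)\times(V^*\setminus\{0\})$, dense in $S_r(V)\oplus V^*$. If $F$ is a covariant with $\psi(F)=0$, then $F$ vanishes on $S_r(V)\times\{\ell\}$, and since $F(g\cdot z)$ is a scalar multiple of $F(z)$, it vanishes on the whole saturation, hence identically; so $\psi$ is injective.

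Surjectivity is the crux. Given $f\in\RR[S_r(V)]^{\mathrm{Aff}_d}$ of $\det$-weight $k$, I would define a function $\widetilde F$ on $S_r(V)\times(V^*\setminus\{0\})$ by $\widetilde F(s,\phi)={\rm det}(g)^{k}\,f(g^{-1}\!\cdot s)$, where $g\in G$ is any element with $g\cdot\ell=\phi$. This is well defined: two valid choices of $g$ differ by an element $h=\left(\begin{smallmatrix}A&b\\0&1\end{smallmatrix}\right)$ of $\mathrm{Aff}_d={\rm Stab}(\ell)$, and the resulting change is cancelled by the relative $\mathrm{Aff}_d$-invariance of $f$ together with ${\rm det}(h)={\rm det}(A)$. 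By construction $\widetilde F$ is $G$-equivariant of weight $k$ and restricts to $f$ at $\phi=\ell$. Moreover $\widetilde F$ is a regular function: the orbit map $G\to V^*\setminus\{0\}$, $g\mapsto g\cdot\ell$, admits Zariski-local regular sections (exhibited explicitly over the standard charts $\{\phi_i\neq0\}$, for instance via the matrix whose last row is $\phi$), so on each such chart $\widetilde F$ is, up to an invertible factor, a polynomial. Since $S_r(V)\oplus V^*$ is an affine space, hence normal, and $S_r(V)\times\{0\}$ has codimension $d+1\ge2$, the function $\widetilde F$ extends uniquely to a regular function $F$ on all of $S_r(V)\oplus V^*$, i.e.\ to a polynomial; by density $F$ remains $G$-equivariant of weight $k$, hence is a covariant, and $\psi(F)=f$. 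Therefore $\psi$ is a ring isomorphism. Everything above can be carried out over $\RR$ directly, or over $\CC$ followed by descent; and since $G$ is reductive, finite generation of the left-hand ring transports to the affine invariant ring.

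The step I expect to be the main obstacle is surjectivity, specifically turning the tautological $G$-invariant extension $\widetilde F$ — which exists for free on the dense orbit — into an honest polynomial on $S_r(V)\oplus V^*$. This needs the two geometric inputs above: regularity of $\widetilde F$ on the full orbit, via explicit local sections of $g\mapsto g\cdot\ell$, and the extension of regular functions across the codimension-$\ge2$ locus $S_r(V)\times\{0\}$, via normality of affine space. The surrounding bookkeeping with relative invariants and the character ${\rm det}^k$ is routine but must be threaded carefully through the well-definedness check for $\widetilde F$.
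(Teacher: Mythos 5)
Your proof is correct, but it takes a different route from the paper: the paper disposes of this statement in one line by citing Grosshans' Theorem 11.7, whereas you reprove the underlying transfer principle from scratch in this special case. The structural observation that makes your argument work is exactly the right one: ${\rm Aff}_d$ is the stabilizer in $G$ of the evaluation covector $\ell = u_{d+1}^*$, the orbit $G\cdot\ell$ is $V^*\setminus\{0\}$, and $\psi$ is restriction of functions to the slice $S_r(V)\times\{\ell\}$, so the claim becomes the standard isomorphism $\RR[X\times G/H]^G\simeq\RR[X]^H$ for $H$ the stabilizer of a point whose orbit is dense with small boundary. Your two key inputs for surjectivity — regularity of the equivariant extension $\widetilde F$ on the open orbit via local sections of $g\mapsto g\cdot\ell$ over the charts $\{\phi_i\neq 0\}$, and Hartogs-type extension across the codimension-$(d+1)\geq 2$ locus $S_r(V)\times\{0\}$ using normality of affine space — are precisely what makes $\ell$ a ``Grosshans point,'' and the well-definedness check via the matching of the character $\det^k$ on ${\rm Aff}_d$ with $\det(A)^k$ is carried out correctly (one should just note that the relative-invariant ring splits as a direct sum over $\det$-weights, so arguing one weight at a time, as you do, suffices). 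What your approach buys is a self-contained and constructive proof that makes visible why the hypothesis ``codimension $\geq 2$ complement of the orbit'' is needed; what the citation buys is brevity and the general statement for arbitrary observable subgroups. The only blemish is a conventions-level imprecision about whether the local section should have $\phi$ as the last row of $g$ or of $g^{-1}$ (it depends on whether $G$ acts on $V^*$ contragrediently), but this does not affect the argument.
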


\begin{proof}
This statement is a special case of~\cite[Theorem~11.7]{grosshans}. 
\end{proof}

The  basic covariant is the homogeneous polynomial  itself. In our notation,
\begin{equation}
\label{eq:inournotation}
 f \,=\, f(m,u) \,\,\,\, = \,\,\, \sum_{|I| \leq r}  {\scriptstyle \binom{r}{I, r-|I|}}\cdot m_I \cdot u^I u_{d+1}^{r-|I|}. 
 \end{equation}
The image of $f$ under the isomorphism (\ref{eq:covariants}) is the
 ${\rm Aff}_d$-invariant moment coordinate $$\psi(f) = m_{00 \cdots 0}.$$
The {\em degree} of a covariant $g = g(m,u)$ is its degree in the unknowns $m_I$.
The {\em order} of $g$ is its degree in the unknowns $u_j$.
The form $f$ is a covariant of degree $1$ and order $r$.
Covariants of order $0$ are invariants of $G$. The degree of an affine invariant 
in the $\ZZ^{d+1}$-grading (\ref{eq:finegrading}) can be
read off from the degree and the order of the corresponding covariant:

\begin{lemma}
Let $g$ be a covariant of degree $p$ and order $o$ for the space $S_r(\RR^{d+1})$ 
of degree $r$ forms. Then the integer $rp-o$ is a positive multiple of 
the number $d+1$ of unknowns.  Setting $q =\frac{rp-o}{d+1}$,
the degree of the associated affine invariant $\psi(g)$ equals
$(p,q,q,\ldots,q)$.
\end{lemma}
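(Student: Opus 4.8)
The plan is to track how the $\ZZ^{d+1}$-grading interacts with the covariant-to-invariant correspondence of Theorem~\ref{thm:covariants}, using that the basic form $f$ carries the $\ZZ^{d+1}$-grading in a transparent way. First I would set up a finer grading on the polynomial ring $\RR[S_r(V)\oplus V^*]$ that refines both the degree $p$ (the $m_I$-degree) and the order $o$ (the $u$-degree), and is compatible with the grading (\ref{eq:finegrading}) on moments. Concretely, since in (\ref{eq:inournotation}) the monomial $m_I\,u^I u_{d+1}^{r-|I|}$ appears, each occurrence of $m_I$ is ``accompanied'' by $u$-weight $(i_1,\ldots,i_d,r-|I|)$; so one declares ${\rm mdeg}(m_I) = (1,\,i_1,\ldots,i_d)\in\ZZ^{d+1}$ exactly as in (\ref{eq:finegrading}), and ${\rm udeg}(u_j)=e_j\in\ZZ^{d+1}$ for $j\le d$ while ${\rm udeg}(u_{d+1})=0$ — but more useful is the combined bookkeeping: assign to $m_I$ the weight vector counting total $t$-degree. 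The key observation is that $f$ is homogeneous: every monomial of $f$ has $m$-degree-plus-$u$-degree summing appropriately, and the torus $(\RR^\times)^{d+1}\subset G$ acting diagonally on $V$ (with weight on $u_{d+1}$ included) scales $f$ in a controlled way.

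The cleanest route is via the central torus. Let $\lambda\cdot {\rm id}\in G$ be a scalar matrix. It acts on $V^*$ by $\lambda^{-1}$ (dual), hence on each $u_j$ by $\lambda^{-1}$, so on a monomial of $u$-degree $o$ by $\lambda^{-o}$. It acts on $S_r(V)$ by $\lambda^{r}$ on each basis element, i.e. on $m_I$ by $\lambda^{r}$ (reading $S_r(V)$ as moments), so on a monomial of $m$-degree $p$ by $\lambda^{rp}$. A covariant $g$ of degree $p$, order $o$ must be a relative invariant: $g(\lambda\cdot x) = \det(\lambda\,{\rm id})^{s}\,g(x) = \lambda^{(d+1)s}g(x)$ for some integer $s$. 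Combining, $\lambda^{rp-o}g = \lambda^{(d+1)s}g$, forcing $rp-o=(d+1)s$; I would then argue $s>0$ (hence $rp-o$ is a \emph{positive} multiple of $d+1$) by noting a nonconstant covariant must have strictly positive $\det$-weight — equivalently, the origin is in the closure of every orbit only for the zero covariant; more elementarily, $f$ itself has $rp-o = r\cdot 1 - r = 0$? — here I must be careful: for $f$ we have $p=1$, $o=r$, giving $rp-o=0$, which would force $s=0$. So positivity fails for $f$ itself. I would instead phrase it as ``nonnegative multiple'' for general covariants and note that for the \emph{invariants} $\psi(g)$ we actually care about (order $o<rp$), positivity is automatic; alternatively just match the paper's claim by restricting to covariants with $o<rp$, which includes all genuine invariants $\psi(g)$ of positive moment-degree. (I'd double-check the intended hypothesis here — the natural statement is $rp-o\ge 0$ in general, $>0$ when $g$ is not a power of $f$ times a constant.)

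Next, the degree of $\psi(g)$ in the $\ZZ^{d+1}$-grading. Since $\psi$ sets $u_{d+1}\mapsto 1$ and $u_j\mapsto 0$ for $j\le d$, a covariant monomial $m_{I_1}\cdots m_{I_p}\cdot u^K u_{d+1}^{o-|K|}$ survives under $\psi$ only if $K=0$, i.e. the $u$-part is a pure power of $u_{d+1}$; surviving monomials are $m_{I_1}\cdots m_{I_p}$ with $\sum|I_a| = $ (the $u_{d+1}$-degree in that monomial). The restriction of the torus action to the subtorus fixing $u_{d+1}$, i.e. diagonal $(\lambda_1,\ldots,\lambda_d,1)$, acts on $m_I$ with weight $(i_1,\ldots,i_d)$ and fixes $u_{d+1}$; invariance of $g$ under the full $G$ (in particular under this subtorus combined with the $\lambda^{(d+1)s}$ scaling on the affine side) pins down the total $\ZZ^d$-weight $\sum_a I_a$ of every surviving monomial to a single value, call it $(q,\ldots,q)$-shaped — but genuinely I'd get $\sum_a I_a = (w_1,\ldots,w_d)$ constant, and the ${\rm Aff}_d$-invariance (invariance under the \emph{full} affine group, not just its torus) forces this weight to be a multiple of the all-ones vector, hence $(q,\ldots,q)$. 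Then counting $m$-degree gives the first coordinate $p$, and $\sum_a(|I_a|) = dq$ must equal the $u_{d+1}$-exponent $o' := o$ reduced appropriately; matching with $rp-o=(d+1)q$ and the identity $rp = p\cdot r = \sum_a|I_a| + \sum_a(r-|I_a|) = dq + (\text{leftover})$ yields $q=\frac{rp-o}{d+1}$, and the $\ZZ^{d+1}$-degree of $\psi(g)$ is $(p,q,\ldots,q)$.

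The main obstacle I anticipate is the step forcing the $\ZZ^d$-weight $\sum_a I_a$ to be \emph{balanced} (all coordinates equal): torus invariance alone only gives that this weight is \emph{constant} across monomials of $g$, not that its coordinates coincide. For that one must use the full action of ${\rm Aff}_d$ — in particular an affine transformation permuting the $d$ coordinate directions, or more robustly the $\det$-weight computation applied to a generic torus in $\mathrm{GL}_d$ (not just the center of $\mathrm{GL}_{d+1}$): invariance of $\psi(g)$ as a \emph{relative} ${\rm Aff}_d$-invariant means it transforms by a power of $\det(A)$, and $\det$ of a diagonal $A=\mathrm{diag}(\lambda_1,\ldots,\lambda_d)$ is $\prod\lambda_j$, symmetric in all $j$; matching weights monomial-by-monomial then forces $\sum_a I_a = (q,\ldots,q)$ with $q$ the common $\det$-exponent. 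Unwinding that identification carefully — and reconciling the ``positive multiple'' wording with the boundary case $g=f$ — is where the real care is needed; the rest is bookkeeping with the two torus actions.
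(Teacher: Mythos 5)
Your argument is essentially the paper's own proof: the scalar matrices ${\rm diag}(t,\ldots,t)$ give $rp-o\equiv 0 \pmod{d+1}$, the fact that a covariant transforms under all of ${\rm GL}_{d+1}$ by a power of the determinant upgrades this to the statement that ${\rm diag}(t_1,\ldots,t_{d+1})$ multiplies $g$ by $(t_1\cdots t_{d+1})^q$, and evaluating at $u=e_{d+1}$ reads off the degree $(p,q,\ldots,q)$. Your caveat about positivity is legitimate: the paper's proof likewise only establishes that $q$ is an integer, and for $g=f$ one indeed has $rp-o=0$, so "nonnegative multiple" (with positivity for the covariants of order $o<rp$ that yield the invariants of interest) is the accurate statement.
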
 

\begin{proof} 
Consider the diagonal matrix ${\rm diag}(t,t,\ldots,t)$
in $G = {\rm GL}_{d+1}(\RR)$. It acts on $S_r(V)$ 
by mutiplying the vector $m$ with $t^r$. It acts on $V^*$
by multiplying the vector $u$ with $t^{-1}$. The covariant $g(m,u)$ of
degree $p$ and order $o$ is transformed  by the action of this diagonal matrix into
$g(t^r m, t^{-1} u) = t^{pr-o} g(m,u)$.
The multiplier $t^{pr-o}$ is a power of
${\rm det}(A) = t^{d+1}$, so $q = \frac{rp-o}{d+1}$ is an integer.
It follows that $ {\rm diag}(t_1,t_2,\ldots,t_{d+1})$
takes $g(m,u)$ to $t_1^q t_2^q \cdots t_{d+1}^q g(m,u)$.
We find that  $\psi(g)(m) = g(m,e_{d+1})$ has degree $(p,q,\ldots,q)$
with respect to the grading in (\ref{eq:finegrading}).
\end{proof}

\begin{example}[$d=1,r=3$] \rm
We derive Example \ref{ex:invbinarycubic} from the
classically known covariants of the binary cubic. The four generators 
of the ring $\RR[m_0,m_1,m_2,m_3,u_1,u_2]^G$ are
\begin{itemize}
\item the binary cubic  $ A$ itself, of degree $1$ and order $3$, \vspace{-0.1in}
\item the Hessian $ B$, of degree $2$ and order $2$, \vspace{-0.1in}
\item the Jacobian of $ A$ and $ B$, denoted by $ C$, of degree $3$ and order $3$, \vspace{-0.1in}
\item the discriminant $ D$, of degree $4$ and order $0$.
\end{itemize}
Applying $\psi$ to these covariants yields the corresponding affine invariants in
Example~\ref{ex:invbinarycubic}.
\end{example}

\begin{example}[$d=2,r=3$] \label{ex:covv33} \rm
Consider any  probability measure on $\RR^2$.
Its moments of order $\leq 3$  can be encoded as the coefficients of a ternary cubic
\begin{equation}
\label{eq:ternarycubic}
\begin{matrix} f &  = &
m_{30}u_1^3+3 m_{21} u_1^2 u_2 + 3 m_{20} u_1^2 u_3
+3 m_{12} u_1 u_2^2 + 6 m_{11} u_1 u_2 u_3 \\ & & +3 m_{10} u_1 u_3^2
+m_{03} u_2^3 + 3 m_{02} u_2^2 u_3
+3 m_{01} u_2 u_3^2 + m_{00} u_3^3.
\end{matrix}
\end{equation}
The notation is as in (\ref{eq:inournotation}).
It is  classically known that $f$ has six fundamental covariants:
\begin{equation}
\label{ex:covariants33}
\begin{matrix}
{\rm covariant} &  f &  S & T &   H & G & J \\
({\rm degree}, {\rm order} )& (1,3) & (4,0) & (6,0) & (3,3) & (8,6) & (12,9) \\
\end{matrix}
\end{equation}
First is the ternary cubic $f$ itself, of degree $1$ and order $3$.
Next are the  {\em Aronhold invariants} $S$ and $T$, of degree $4$ and $6$ resp. 
These are followed
by the Hessian $H$.
The covariant $G$ is explained in Dolgachev's book \cite[Section 3.4.3]{Dolgachev},
where the following formula can be  found:
$$ \qquad G \,\, = \,\, 
{\rm det} \begin{small} \begin{pmatrix} f_{11} & f_{12} & f_{13} & h_1 \\
f_{12} & f_{22} & f_{23} & h_2 \\
f_{13} & f_{23} & f_{33} & h_3 \\
h_1 & h_2 & h_3 & 0 \end{pmatrix}  \end{small}
\qquad \hbox{with}\,\, \text{
$f_{ij} = \frac{ \partial^2 f}{ \partial u_i \partial u_j} \,\,$
and $\,\,h_i = \frac{\partial H}{\partial u_i}$}. $$
The last covariant $J$ is the Jacobian of $f$, $H$ and $G$.
This is known as the {\em Brioschi covariant}.

The six fundamental affine invariants are the images of the fundamental covariants 
under replacing $(u_1,u_2,u_3) $ with $ (0,0,1)$:
\begin{equation}
\label{ex:affinv33}
\begin{matrix}
 \text{affine invariant} & m_{00} =\psi(f) &  s = S &  t = T & h = \psi(H) &  g = \psi(G) & j = \psi(J) \\
 \text{$\ZZ^3$-degree} & (1,0,0) &\, (4,4,4)\,  & \,(6,6,6)\, & (3,2,2) & (8,6,6) & (12,9,9) \\
 \text{\# terms} &  1 & 25 & 103 & 5 & 168 &  892 \\
 \end{matrix}
 \end{equation}
 \end{example}

We summarize our derivation of the affine invariants of ternary cubics as follows:

\begin{proposition} \label{prop:moliensuccess}
For $d=2$ and $r=3$, the invariant ring (\ref{eq:invariantring})
equals $\RR[m_{00},s,t,h,g,j] $ modulo one
homogeneous relation of degree
$(24,18,18)$. Hence its Hilbert series equals
\begin{equation}
\label{eq:hilbertseries}
\begin{small}
\frac{1 + x^{12} y^9 z^9}{
(1-x)(1-x^4 y^4 z^4) (1- x^6 y^6 z^6)(1-x^3 y^2 z^2) (1-x^8 y^6 z^6) (1-x^{12} y^9 z^9)} . \end{small}
\end{equation}
\end{proposition}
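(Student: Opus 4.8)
The plan is to transfer the problem, through the isomorphism $\psi$ of Theorem~\ref{thm:covariants}, to the classically understood ring of covariants of a ternary cubic, where the answer is already on record. Concretely, one works on the left-hand side of (\ref{eq:covariants}) with $G = {\rm GL}_3(\RR)$ acting on $S_3(V) \oplus V^*$, and invokes the classical structure theorem for the covariant ring of the ternary cubic, going back to Gordan (see \cite[\S3.4]{Dolgachev}): the ring $\RR[S_3(V) \oplus V^*]^G$ is minimally generated by the six covariants $f, S, T, H, G, J$ of (\ref{ex:covariants33}), and the ideal of relations among them is principal, generated by the single relation expressing the square of the Brioschi covariant,
\begin{equation*}
J^2 = \Phi(f, S, T, H, G),
\end{equation*}
which is homogeneous of degree $24$ and order $18$. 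Since $\psi$ is an isomorphism of graded rings, it carries $f, S, T, H, G, J$ to $m_{00}, s, t, h, g, j$ and carries this relation to the single relation $j^2 = \Phi(m_{00}, s, t, h, g)$ generating the ideal of relations among the six affine invariants. By the lemma relating the degree and order of a covariant to its $\ZZ^{d+1}$-grading, applied with $r = 3$, $d+1 = 3$, $p = 24$, $o = 18$, and $q = (72-18)/3 = 18$, this relation has $\ZZ^3$-degree $(24, 18, 18)$. This establishes the presentation claimed in the statement.

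For the Hilbert series, note that the six images generate a subring $R = \RR[m_{00}, s, t, h, g, j]$ of the polynomial ring $\RR[m_I : |I| \le 3]$, hence a domain. Its Krull dimension is $5$, the codimension $10 - \dim({\rm Aff}_2 \cap {\rm SL}_3(\RR))$ of a generic orbit; so the kernel of the surjection $\RR[y_1, \ldots, y_6] \twoheadrightarrow R$ is a height-one prime, and since $\RR[y_1, \ldots, y_6]$ is a unique factorization domain this kernel is principal, generated by the relation found above. Thus $R$ is a graded hypersurface, and with the $\ZZ^3$-degrees of the generators from (\ref{ex:affinv33}) and the relation degree $(24, 18, 18)$ its $\ZZ^3$-graded Hilbert series equals
\begin{equation*}
\frac{1 - x^{24} y^{18} z^{18}}{(1-x)(1-x^4 y^4 z^4)(1-x^6 y^6 z^6)(1-x^3 y^2 z^2)(1-x^8 y^6 z^6)(1-x^{12} y^9 z^9)}.
\end{equation*}
The factorization $1 - x^{24} y^{18} z^{18} = (1 - x^{12} y^9 z^9)(1 + x^{12} y^9 z^9)$ puts this into the form (\ref{eq:hilbertseries}).

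The step demanding the most care is the appeal to the classical covariant structure theorem: one must be sure that the six covariants in (\ref{ex:covariants33}) really generate the entire covariant ring and that their sole relation is the stated one, with no hidden lower-degree syzygy. A self-contained verification --- presumably what the name of this proposition alludes to --- is to compute the bigraded Molien-type series of the ${\rm GL}_3(\RR)$-action on $S_3(V) \oplus V^*$ by Weyl integration, extract from it that exactly these six fundamental covariants and a single relation in bidegree $(24, 18)$ are to be expected, and then confirm by a Gr\"obner basis computation with the explicit polynomials of Example~\ref{ex:covv33} that $m_{00}, s, t, h, g, j$ are algebraically independent modulo one relation of $\ZZ^3$-degree $(24, 18, 18)$; matching the Hilbert series of their subring against the Molien series then closes the argument.
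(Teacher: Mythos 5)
Your approach is the paper's own: the proposition is stated there with no proof, as a summary of Example~\ref{ex:covv33}, and the intended argument is exactly the one you give --- transfer through the isomorphism of Theorem~\ref{thm:covariants} to the classical covariant ring of the ternary cubic, quote the classical structure theorem (six fundamental covariants $f,S,T,H,G,J$ and a single relation $J^2=\Phi(f,S,T,H,G)$ of degree $24$ and order $18$), and convert degree/order to the $\ZZ^3$-grading via the lemma, giving $(24,18,18)$. That part, together with your hypersurface Hilbert-series computation and the dimension count showing the relation ideal is principal, is correct.

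The step you should not have waved through is the last sentence. Your series $(1-x^{24}y^{18}z^{18})/\prod_{i=1}^{6}(1-x^{a_i}y^{b_i}z^{c_i})$, after factoring the numerator and cancelling $(1-x^{12}y^{9}z^{9})$, has only \emph{five} factors left in the denominator --- the factor $(1-x^{12}y^{9}z^{9})$ drops out, reflecting that the ring is a free module of rank two over $\RR[m_{00},s,t,h,g]$ with basis $\{1,j\}$. The display (\ref{eq:hilbertseries}) as printed retains all six denominator factors, so it is \emph{not} equal to your expression. Already in degree $(12,9,9)$ the printed formula predicts a two-dimensional space of invariants, whereas the space is spanned by $j$ alone: no monomial in $m_{00},s,t,h,g$ can have second degree coordinate $9$, since $4\beta+6\gamma+2\delta+6\epsilon$ is always even. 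Since the Hilbert series is asserted to follow (``Hence'') from the presentation in the first sentence of the proposition, the printed formula must carry a spurious factor $(1-x^{12}y^{9}z^{9})$; your computation is the consistent one, and you should have flagged the discrepancy instead of asserting that the factorization ``puts this into the form (\ref{eq:hilbertseries})''.
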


The moment varieties $\mathcal{M}_{[r]}(\cP)$ are hypersurfaces in only very few cases.
Examples include
$\,\cP =$  quadrilateral with $r=3$, 
$\cP = $ 13-gon with $r=6$, \ or $\cP = $ octahedron with $r=3$.
In those cases there is a single affine invariant.
The first one is featured in the next section.

\section{Quadrilaterals and Beyond}
\label{sec6}

This section is devoted to the smallest non-simplex. Let $Q$ be a quadrilateral in the plane. 
Some of its moments were already explicitly shown in Example~\ref{ex:quadrMoments}.
We know the normalized moment generating function  from Section~\ref{sec2}.
The only non-faces of the quadrilateral $Q$ are its two diagonals. 
Hence, the adjoint  ${\rm Ad}_Q$ is given by the intersection point of these diagonals. 
More specifically, if $x_1, x_2, x_3, x_4$ denote the cyclically labeled vertices of $Q$ and $(\delta_1, \delta_2)$ is the diagonal intersection point, then the normalized moment generating function of $Q$ equals 
\begin{equation}
\label{eq:quadformula}
\frac{1-\delta_1 t_1-\delta_2 t_2}{(1-x_{11}t_1 - x_{12}t_2)(1-x_{21}t_1 - x_{22}t_2)
(1-x_{31}t_1 - x_{32}t_2)(1-x_{41}t_1 - x_{42}t_2)}.
\end{equation}

It is a non-trivial task to compute relations among the moments of quadrilaterals.
The easiest relations are given by Theorem~\ref{th:Kathlen}, if we take the Hankel matrix~\eqref{eq:hankelmatrix} for $r=6$.

\begin{example} \rm
\label{eq:quadonline}
Consider the moments $m_{i0}$ where $i =0,1,\ldots,6$.
The corresponding moment variety
$\mathcal{M}_{\{ \! \{ 6 \} \! \}}(Q) $ is the
hypersurface $\mathcal{M}_{\{ \! \{ 6 \} \!\}}(2,4) \subset \PP^6$. It is defined by
the determinant of
\begin{align}
\label{eq:HankelQuadr}
\left( \begin{array}{ccccc}
0 & 0 & m_{00} & 3 m_{10} & 6 m_{20} \\
0 & m_{00} & 3 m_{10} & 6 m_{20} & 10 m_{30} \\
m_{00} & 3 m_{10} & 6 m_{20} & 10 m_{30} & 15 m_{40} \\
3 m_{10} & 6 m_{20} & 10 m_{30} & 15 m_{40} & 21 m_{50} \\
 6 m_{20} & 10 m_{30} & 15 m_{40} & 21 m_{50} & 28 m_{60}
\end{array} \right).
\end{align}
This relates the moments
 of the pushforward measure given from projecting $Q$ onto a line.
\end{example}

What we are actually interested in are \emph{mixed relations}, i.e.~equations 
in the moments $m_{ij}$  that do not come from projections onto lines as in 
Example~\ref{eq:quadonline}.
The dimension of $\mathcal{M}_{\mathcal{A}}(Q)$ in $\mathbb{P}^{|\mathcal{A}|}$ is eight if 
$\mathcal{A} \subset \mathbb{N}^2$ is big enough.
We first show an interesting scenario with $|\mathcal{A}| = 8$.

\begin{example}\rm
Let $\mathcal{A} :=( \lbrace 0,1,2 \rbrace \times \lbrace 0,1,2 \rbrace)
\backslash \lbrace (0,0) \rbrace $.
These moments are algebraically independent. Hence the moment variety
$\mathcal{M}_{\mathcal{A}}(Q) $ is equal to the ambient space $ \PP^8$. Consider
the map $ (\mathbb{C}^2)^4 \dashrightarrow\PP^8$
which sends quadrilaterals to their moments in $\mathcal{A}$.
A computation with the software {\tt HomotopyContinuation.jl} \cite{homotopyCont}
reveals that randomly chosen fibers of this map consist of $80$ points over $\CC$.
We conclude that the map $ (\mathbb{C}^2)^4 \dashrightarrow \PP^8$
is generically $80$-to-$1$. The dihedral group of order $8$
acts on each fiber by permuting vertices of $Q$.
Hence each fiber consists of $10$ geometric configurations, generally over~$\CC$.

For a concrete example, consider the quadrilateral
$X = \{(1,-1), (3,2), (2,4), (-1,2)\}$. The fiber for this $X$
consists of $80$ \emph{real} points. 
These correspond to four non-convex quadrilaterals, two convex quadrilaterals and 
four quadrilaterals with self-crossings; see Figure~\ref{fig:quads}.

\begin{figure}[h]
\centering
\includegraphics[width=0.24\textwidth]{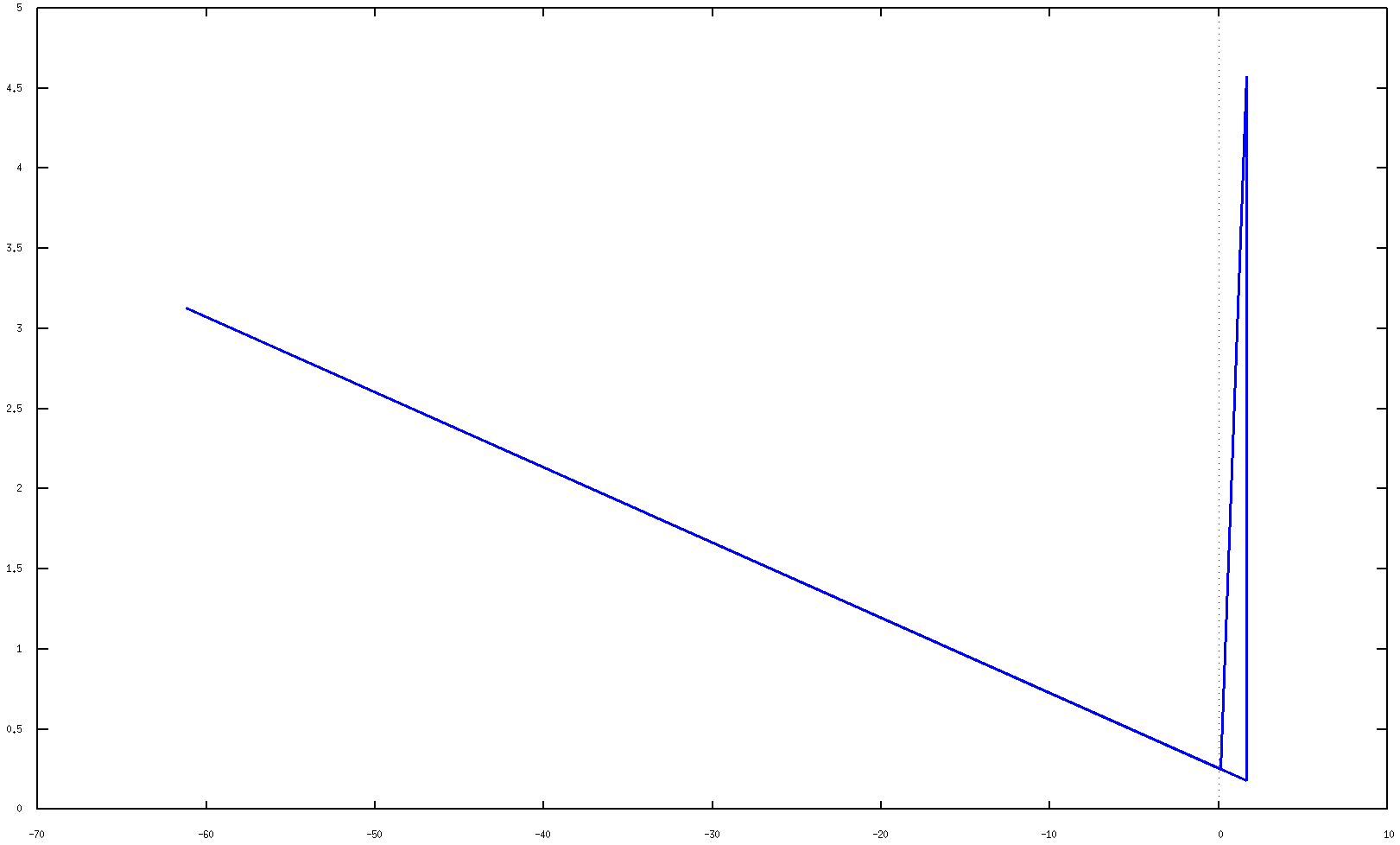} 
\includegraphics[width=0.24\textwidth]{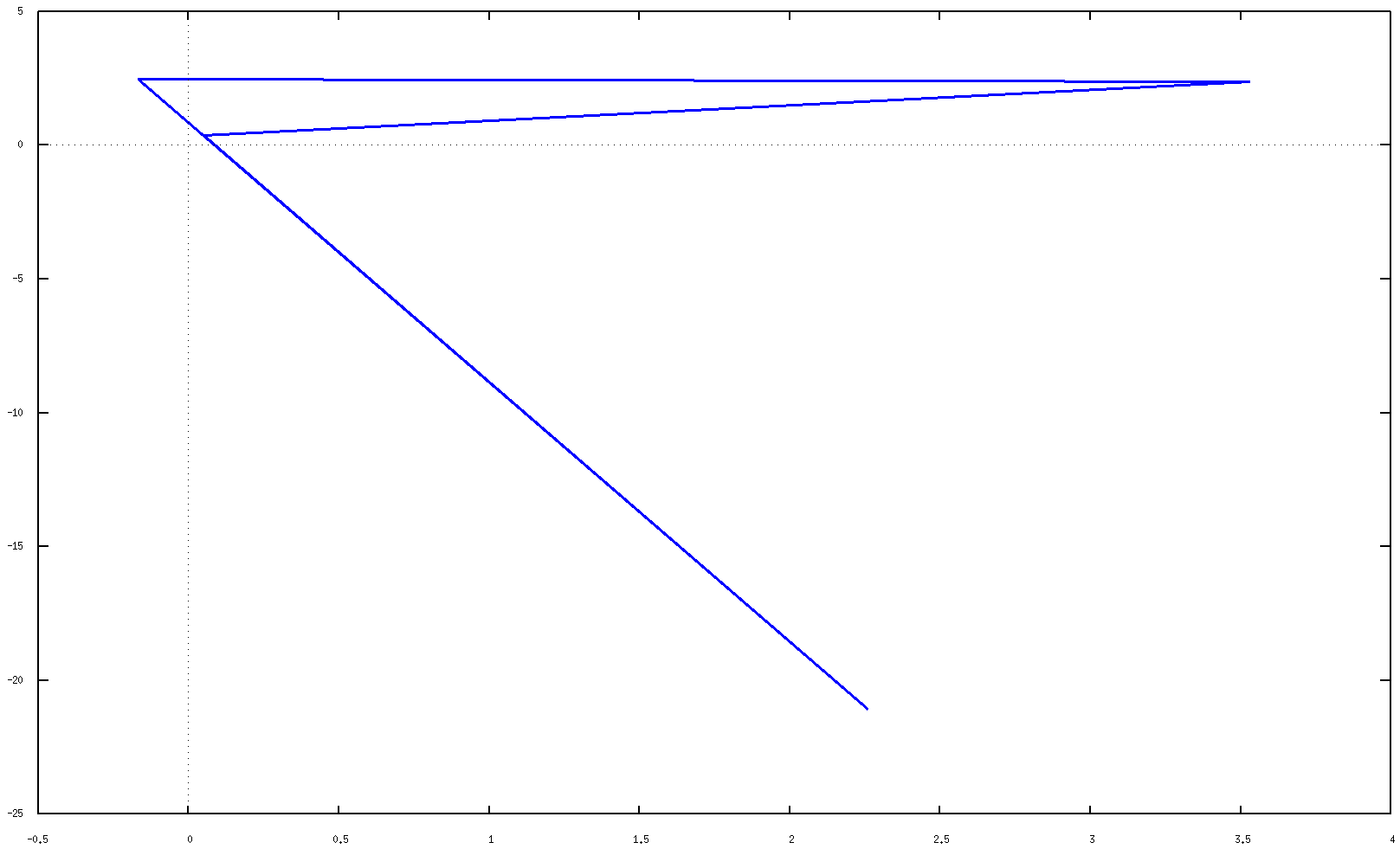} 
\includegraphics[width=0.24\textwidth]{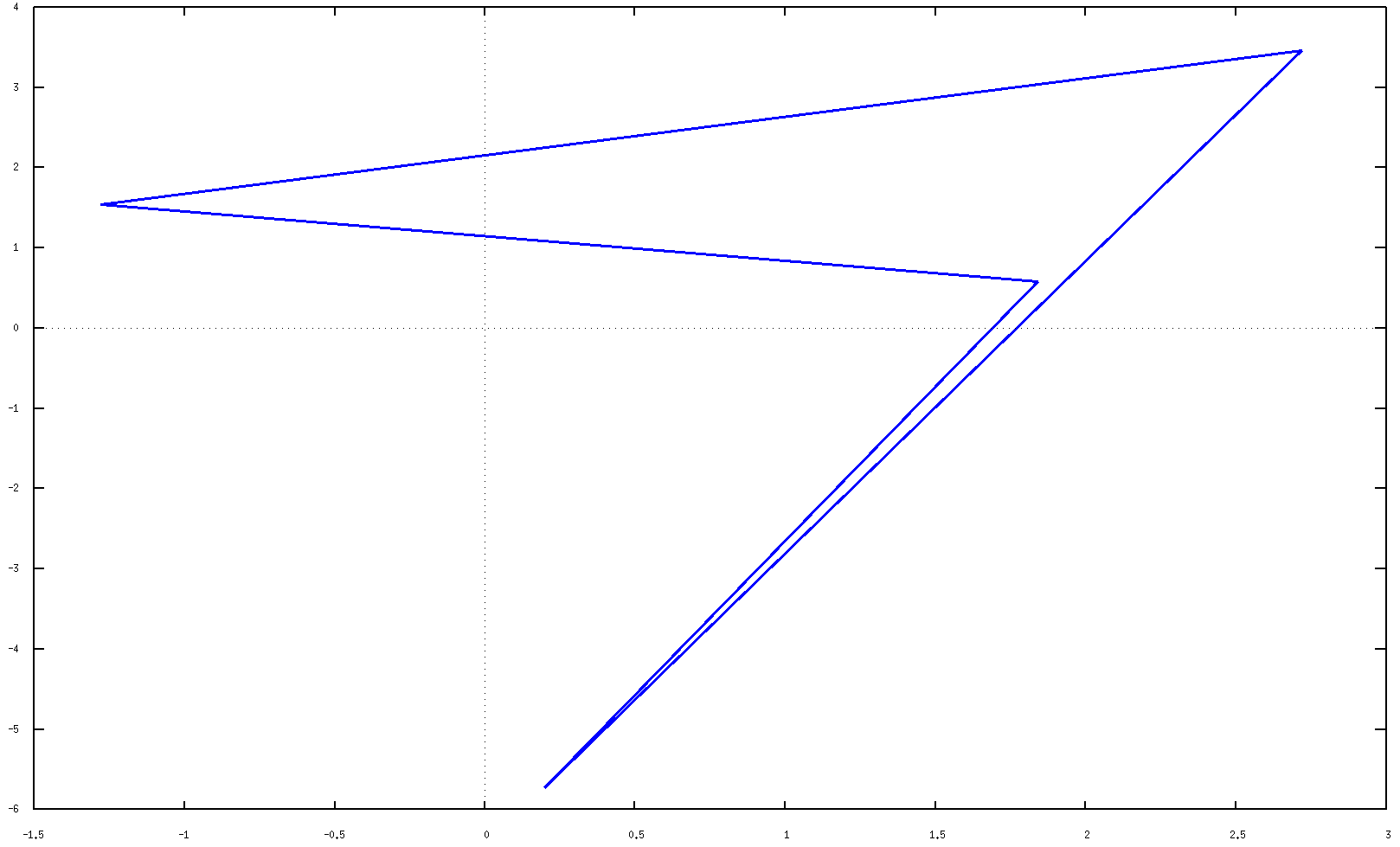} 
\includegraphics[width=0.24\textwidth]{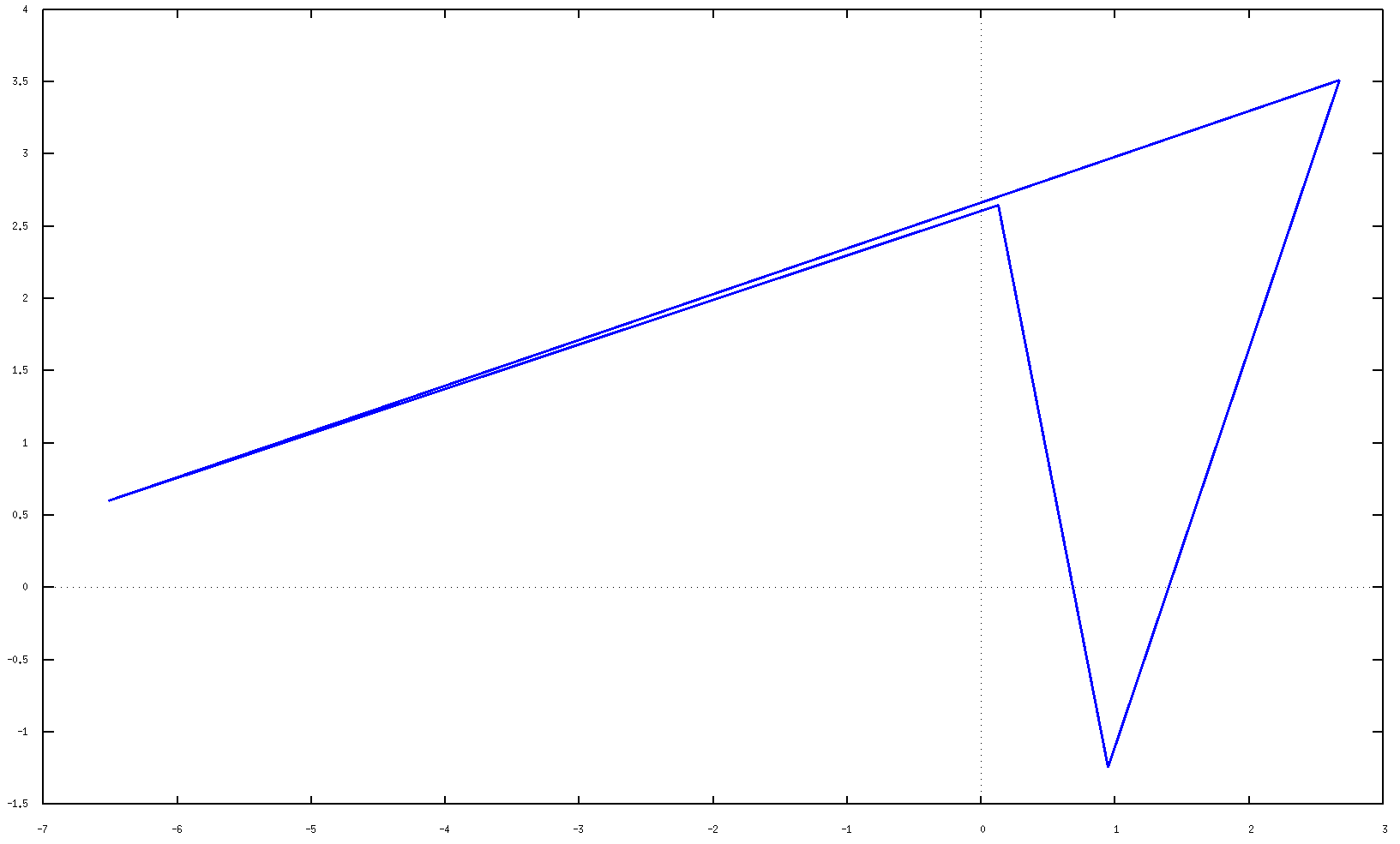} \\
\includegraphics[width=0.29\textwidth]{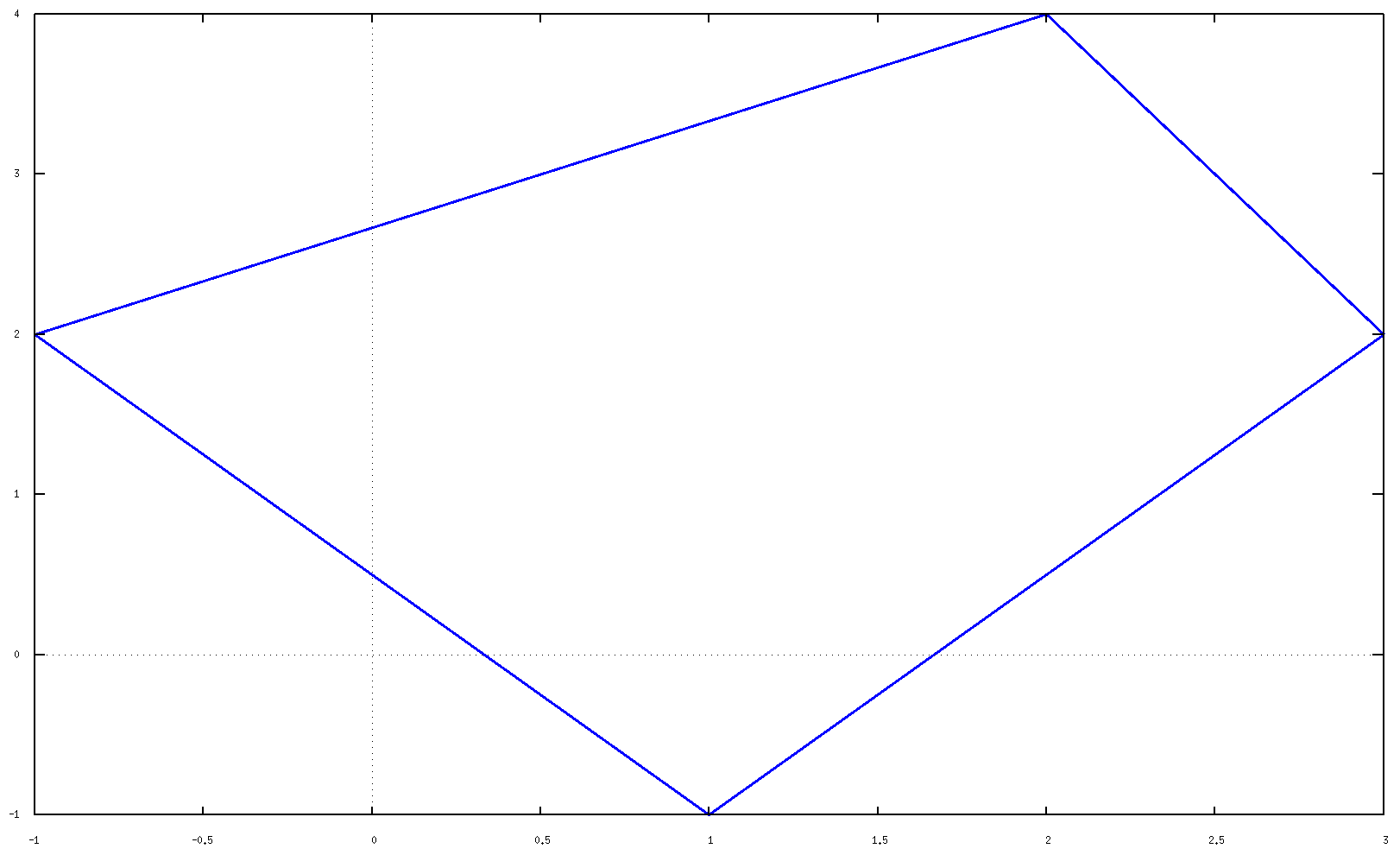} \qquad
\includegraphics[width=0.29\textwidth]{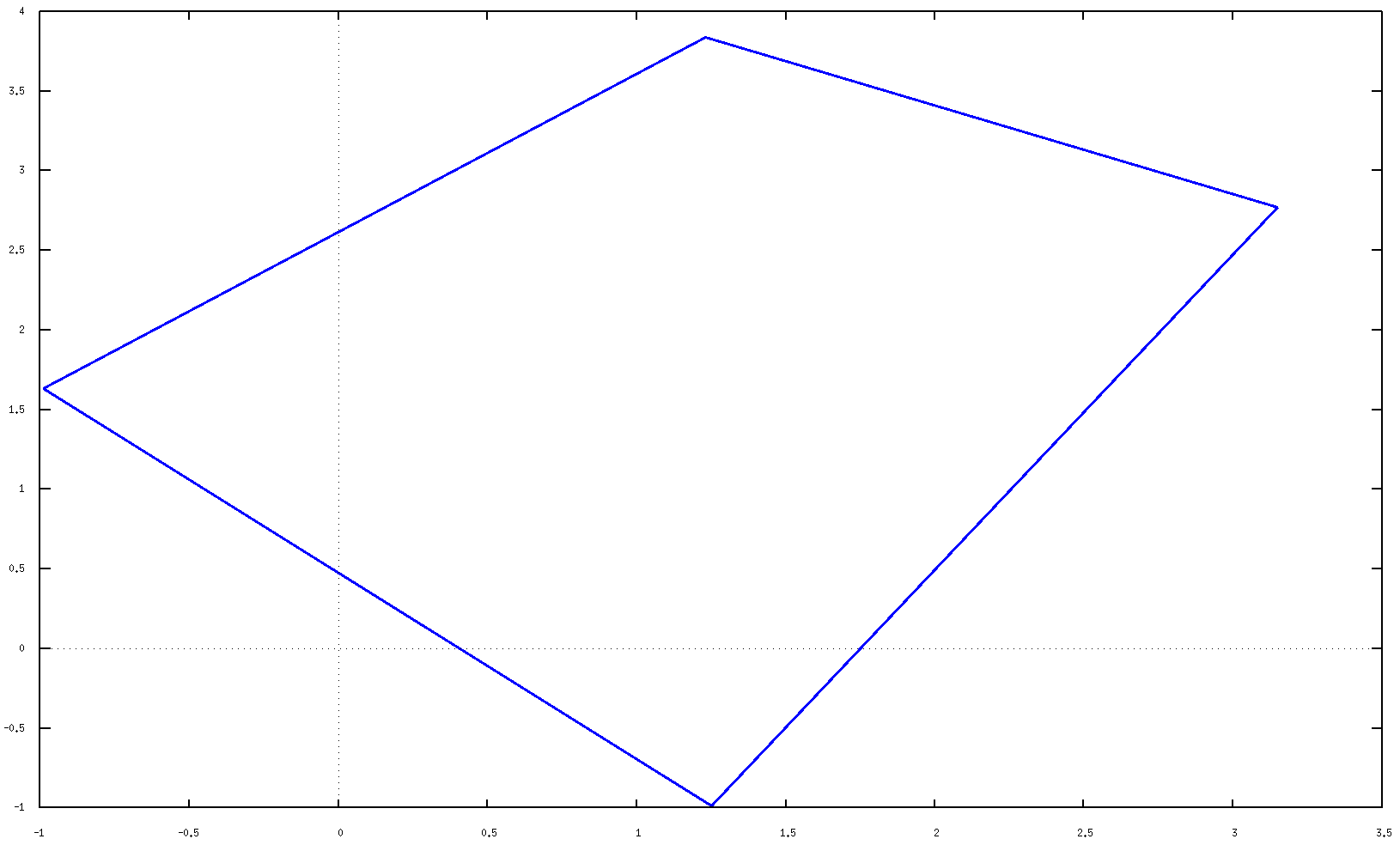} \\
\includegraphics[width=0.24\textwidth]{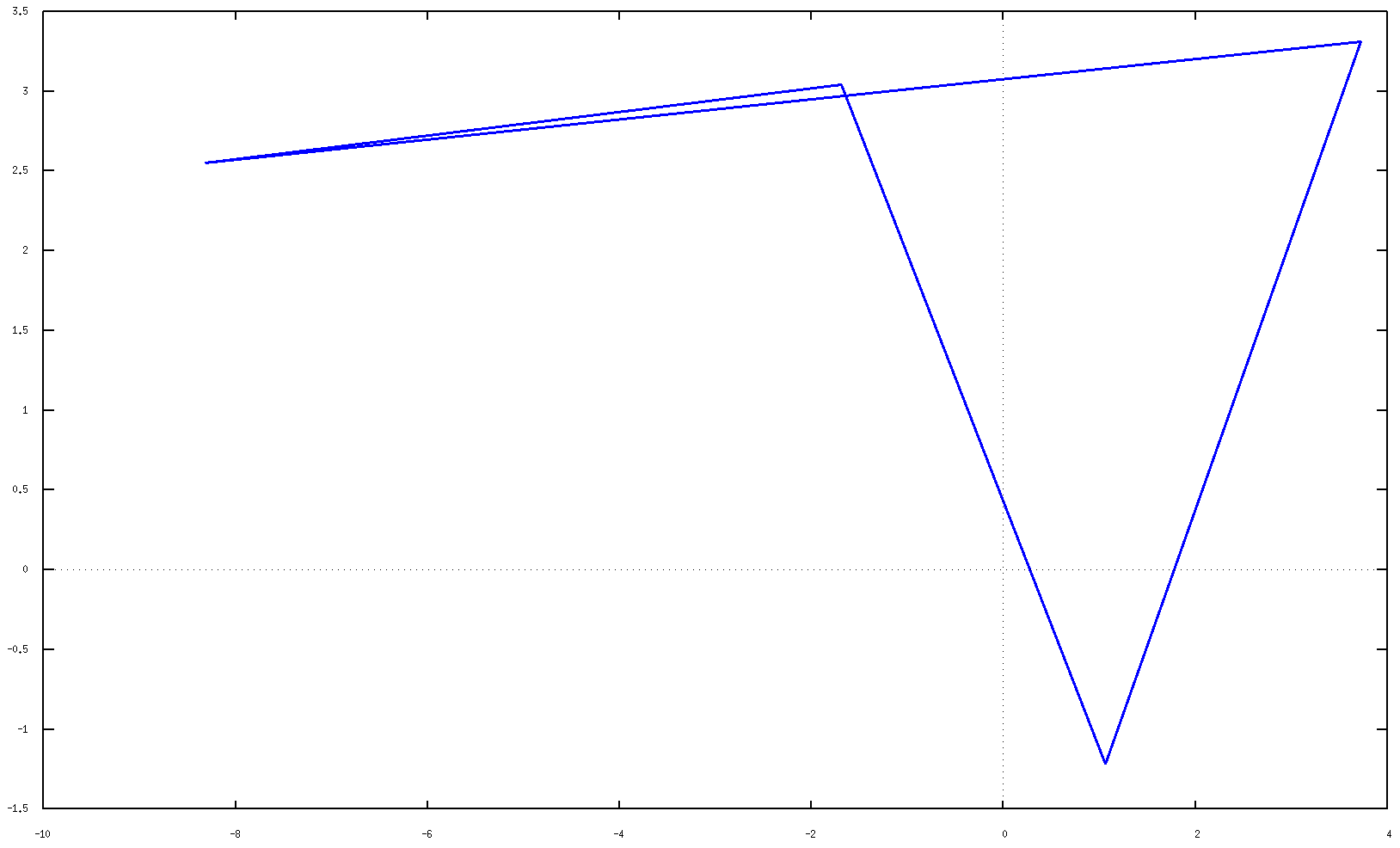}
\includegraphics[width=0.24\textwidth]{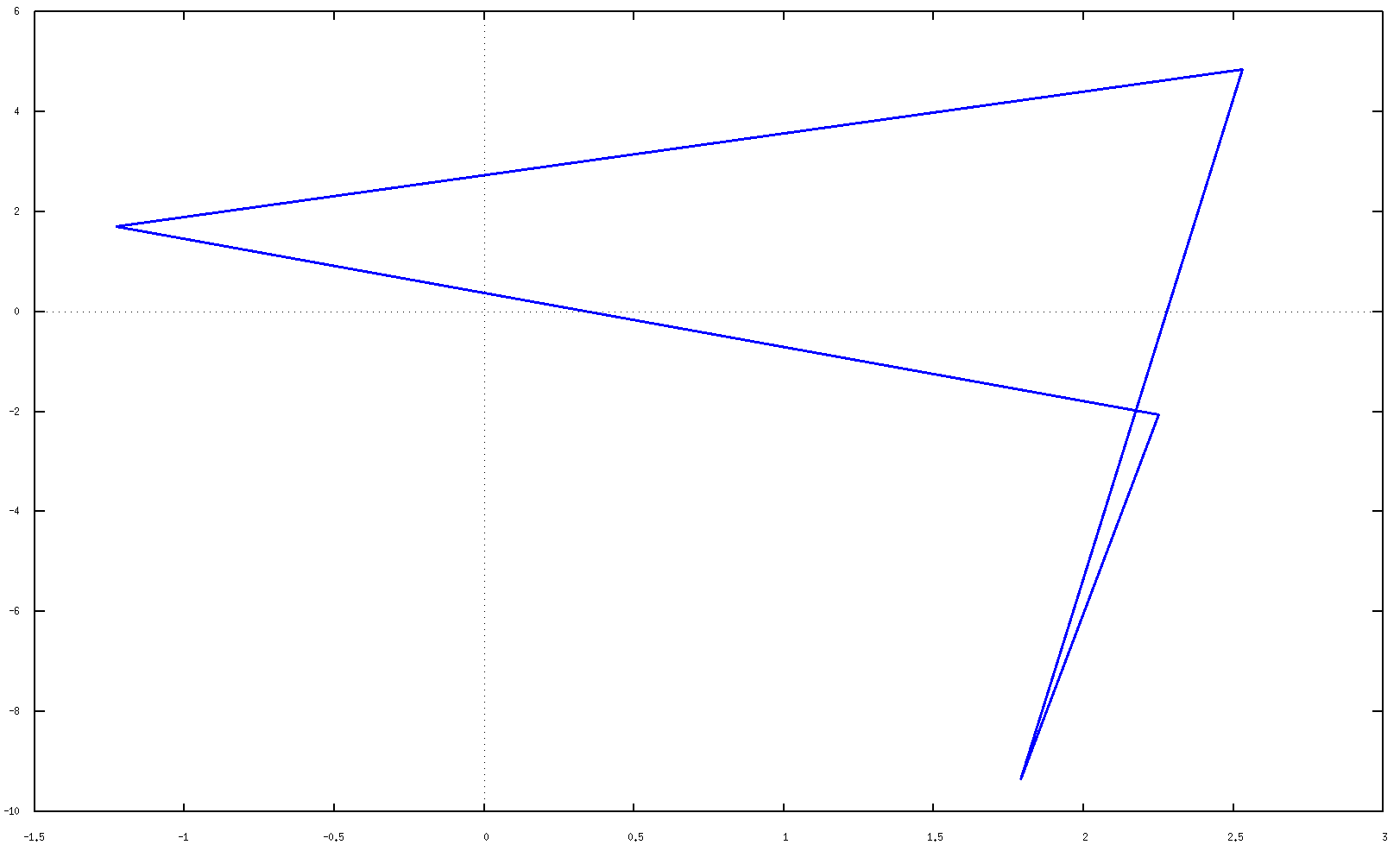}
\includegraphics[width=0.24\textwidth]{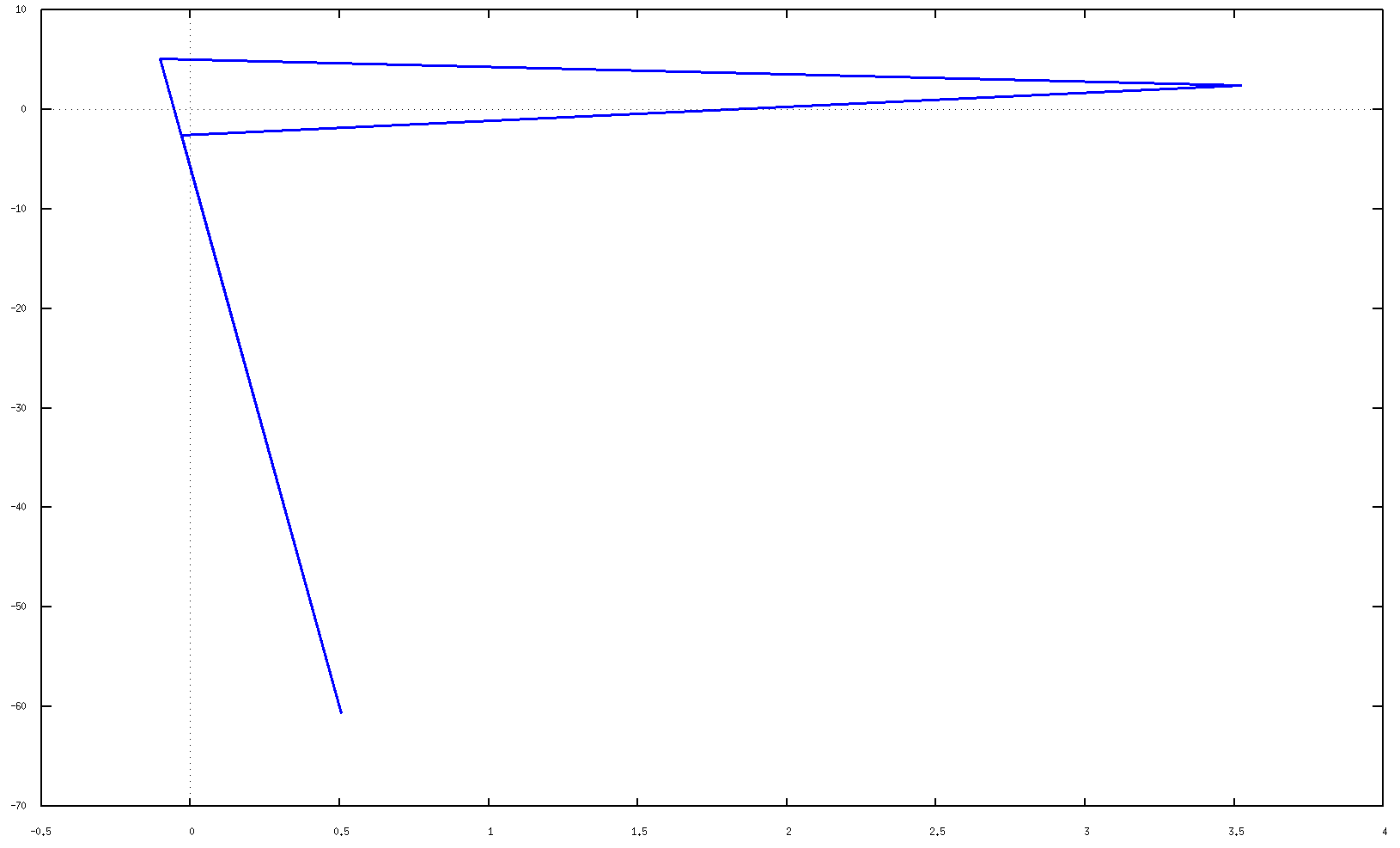}
\includegraphics[width=0.24\textwidth]{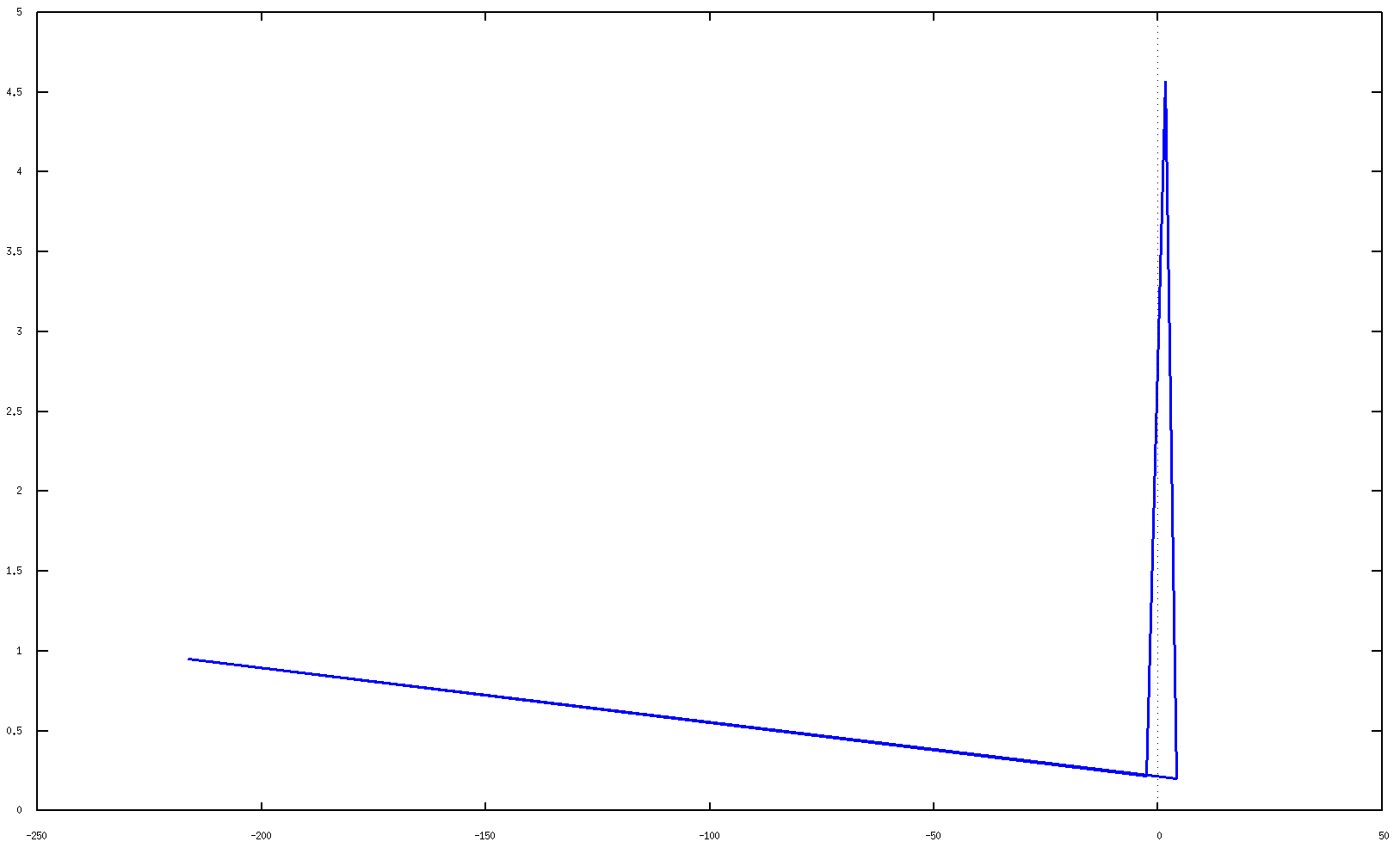}
\caption{Ten real quadrilaterals 
 having the same moments $m_{ij}$ for $i,j \in \lbrace 0,1,2 \rbrace$.}
\label{fig:quads}
\end{figure}
\end{example}

In what follows we consider sets $\mathcal{A}$ with $|\mathcal{A}| = 9$.
Here, the moment variety $\mathcal{M}_{\mathcal{A}}(Q)$ is  typically a hypersurface in~$\mathbb{P}^9$.
The most natural index sets $\mathcal{A}$ arise from partitions
 of the integer $10$. Given a partition
 $\lambda=\{\lambda_0 \geq \lambda_1 \geq \cdots \geq \lambda_s > 0\}$, the
 corresponding index set is
 \begin{align*}
\mathcal{A}_\lambda\,\, := \,\,\lbrace  (0,1), (0,2), \ldots, (0,\lambda_0-1), (1,0),(1,1), \ldots, (1,\lambda_1-1), \ldots, (s,0),\ldots, (s ,\lambda_s-1) \rbrace.
\end{align*}
We simply write $\mathcal{M}_\lambda := \mathcal{M}_{\mathcal{A}_\lambda}$.
For example, $\mathcal{M}_{{4\,3\,2\,1}}(Q)$ is the variety of moments up to order three 
 which was  earlier denoted by $\mathcal{M}_{[3]}(Q)$.
 We determine this hypersurface explicitly.

\begin{theorem} \label{thm:beaquad}
Let $Q$ be a quadrilateral in the plane. 
The moment variety $\mathcal{M}_{[3]}(Q)$ is~a hypersurface in $\PP^9$, whose
defining polynomial has $5100$ terms of degree $({\bf 18},12,12)$. It equals
$$ \begin{small}  \begin{matrix}
 2125764 \, h^6
 \,+\,5484996 \,m_{00}^2 h^4 s
\, -\,1574640 \,m_{00} g h^3
  \, +\,364500 \,m_{00}^3h^3t
     \,+\,3458700 \,m_{00}^4 h^2 s^2     \\
  - 2041200 m_{00}^3 g h s 
  + 472500 m_{00}^5 h s t
  -122500 m_{00}^6 s^3   
  +291600 m_{00}^2 g^2
  -135000 m_{00}^4 g t
  +15625 m_{00}^6 t^2 ,
\end{matrix} \end{small} $$
where the affine invariants in (\ref{ex:affinv33}) are normalized
to have content one and leading monomials
$$ \begin{small} s = m_{00} m_{02} m_{12} m_{30} + \,\cdots \! , \,\,
     t = m_{00}^2 m_{03}^2 m_{30}^2 + \,\cdots \! ,\,\,
     h = m_{00} m_{02} m_{20} + \,\cdots \! ,\,\,
     g = m_{00}^3 m_{02}^3 m_{30}^2 +\, \cdots.
     \end{small}     $$
\end{theorem}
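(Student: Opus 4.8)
The plan is to exploit the explicit parametrization in equation~(\ref{eq:quadformula}) together with the invariant-theoretic normal form from Section~\ref{sec5}. By Lemma~\ref{lem:actionOnMomentVarieties}, the variety $\mathcal{M}_{[3]}(Q)$ is ${\rm Aff}_2$-invariant, and since it is a hypersurface (as observed at the end of Section~\ref{sec5}), Theorem~\ref{thm:covariants} together with Proposition~\ref{prop:moliensuccess} tells us that its defining polynomial must be an isobaric polynomial in the fundamental affine invariants $m_{00}, s, t, h, g, j$ of the ternary cubic. First I would compute the $\ZZ^3$-degree of the hypersurface: parametrize a generic quadrilateral by its eight vertex coordinates, specialize the moment generating function~(\ref{eq:quadformula}) to get the ten moments $m_{ij}$ of order $\le 3$ as explicit rational functions (numerator of degree $|I|+1$ over ${\rm vol}(Q)$, as in Theorem~\ref{thm:nmgf_polytope}), and determine the degree of the implicit equation via an elimination or interpolation computation; the stated answer $({\bf 18},12,12)$ should drop out, consistent with $p=6$, $q=\tfrac{3\cdot 6 - 0}{3}=6$ in the notation of the lemma following Theorem~\ref{thm:covariants} applied to $h$ of degree $(3,2,2)$.

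Next, since the defining polynomial has degree $(18,12,12)$, I would list all monomials $m_{00}^{a_0} s^{a_1} t^{a_2} h^{a_3} g^{a_4} j^{a_5}$ of that $\ZZ^3$-degree; the degrees $(1,0,0)$, $(4,4,4)$, $(6,6,6)$, $(3,2,2)$, $(8,6,6)$, $(12,9,9)$ are rigid enough that only finitely many solutions $(a_0,\dots,a_5)\in\NN^6$ exist, and one checks $a_5=0$ forced (the Brioschi covariant has $\ZZ^3$-degree $(12,9,9)$, whose $y$-to-$x$ ratio $9/12=3/4$ is incompatible with filling out to $(18,12,12)=3\cdot(6,4,4)$ using the remaining generators — this parity/ratio bookkeeping is the cleanest way to see $j$ cannot appear). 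The eleven surviving monomials are exactly the eleven terms displayed in the theorem, so the defining polynomial is an unknown linear combination of these eleven known polynomials in the $m_{ij}$.

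To pin down the eleven coefficients, I would substitute numerically random rational quadrilaterals $Q$ into both the eleven invariant-monomials (evaluated via the explicit formulas for $s,t,h,g$ — with the normalization fixed by the stated leading monomials) and set the linear combination to vanish; a handful of random samples beyond eleven gives an overdetermined linear system whose one-dimensional kernel yields the coefficient vector $(2125764, 5484996, -1574640, 364500, \dots)$ up to scale, and clearing content fixes the scale. Finally I would verify irreducibility of the resulting polynomial (so that it genuinely generates the prime ideal of the hypersurface), e.g.\ by checking it is squarefree and that its specialization to a generic line in $\PP^9$ is irreducible, or simply by a Gröbner/factorization check in a computer algebra system.

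The main obstacle is the sheer size of the computation: the invariants $g$ ($168$ terms) and $t$ ($103$ terms) raised to powers, expanded in the ten moment variables and then re-expressed via the substitution $m_{ij}\mapsto m_{ij}(X)$ in eight variables, produce enormous intermediate expressions — hence the $5100$-term answer. The conceptual content (invariance forces the normal form; degree and ratio constraints cut the monomial list to eleven; linear algebra fixes the coefficients) is routine, but making the symbolic computation feasible — choosing good evaluation points, working over a finite field first to get the coefficient ratios, then lifting — is where the real work lies. I expect this is exactly why the authors describe the result as "computer-aided."
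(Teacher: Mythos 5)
Your proposal is correct and follows essentially the same route as the paper's own derivation: determine the $\ZZ^3$-degree $(18,12,12)$ of the hypersurface (the paper does this numerically via monodromy in Theorem~\ref{thm:42partitions}), make an ansatz as a linear combination of the degree-$(18,12,12)$ monomials in the fundamental affine invariants from Proposition~\ref{prop:moliensuccess}, solve for the coefficients by evaluating at random quadrilaterals, and verify by checking vanishing on the parametrization. Your explicit parity argument ruling out the Brioschi covariant $j$ and your enumeration of exactly eleven monomials are correct and a nice touch, but the overall strategy coincides with the paper's.
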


\begin{proof}[Derivation and Proof]
The above formula was found as follows.
By Table~\ref{tab:quadrDegrees} below,   the $\mathbb{Z}^3$-degree of the hypersurface is $(18,12,12)$.
We used Proposition \ref{prop:moliensuccess} to generate affine invariants of this degree with indeterminate coefficients.
By plugging in the moments $m_{ij}$
from various random quadrilaterals, we created a system of
linear  equations in the coefficients. This system was solved which led to the formula above. Independent verification 
of the formula was carried out by checking that it vanishes on the parametrization 
$\,(\CC^2)^8 \rightarrow \mathcal{M}_{[3]}(Q)$. 
\end{proof}

We demonstrate the same technique for another interesting
hypersurface in $\PP^9$ that is also invariant under ${\rm Aff}_2$.
It represents the moments of order $\leq 3$ of probability measures
on the triangle $\Delta_2$ whose densities are linear functions.
This hypersurface is the image of the
$8$-dimensional variety $\PP^2 \times \mathcal{M}_{[4]}(\Delta_2)$ 
under the map into $\PP^9$ whose coordinates are
$$ \qquad M_{ij} \,\, =  \,\, \alpha \cdot m_{i+1,j}\, +\, \beta \cdot  m_{i,j+1}
\, +\, \gamma \cdot m_{i,j}  \quad \qquad {\rm for} \quad
0 \leq i+j \leq 3. $$
Here $(\alpha : \beta : \gamma ) \in \PP^2\,$ and $\,m_{i,j}$ are the moments of
the uniform probability measure on $\Delta_2$.

\begin{proposition} \label{prop:52}
The above hypersurface has degree $({\bf 52},36,36)$. Its defining polynomial~is
$$
\begin{tiny}
\begin{matrix}
 12288754756878336m^{16} s^9
 -125913170530271232 h^2 m^{14} s^8
 -11555266180939776 h m^{15} s^7 t
 -423695444226048 m^{16} s^6 t^2 \\
-242587475329941504 h^4 m^{12} s^7 
-67888179490848768 h^3 m^{13} s^6 t
-2253544388296704 h^2 m^{14} s^5 t^2
+92156256976896 h m^{15} s^4 t^3 \\
+4239929831616 m^{16} s^3 t^4
-2425179321925632 g h m^{13} s^7 
+767341894828032 g m^{14} s^6 t
-1302706722212675584 h^6 m^{10} s^6 \\
-108262506929061888 h^5 m^{11} s^5 t
+673312350928896 h^4 m^{12} s^4 t^2 
+535497484271616 h^3 m^{13} s^3 t^3 
+31959518257152 h^2 m^{14} s^2 t^4 \\
+440798423040 h m^{15} s t^5
+195936798885543936 g h^3 m^{11} s^6
-410140620619776 g h^2 m^{12} s^5 t 
-412398826108747776 g h^6 m^8 s^3 t \\
 -2360537593675776 g h m^{13} s^4 t^2
-89805332054016 g m^{14} s^3 t^3
-486870353365172224 h^8 m^8 s^5
+6819936693387264 h^7 m^9 s^4 t \\
+29422733985054720 h^6 m^{10} s^3 t^2 
+2782917213290496 h^5 m^{11} s^2 t^3 
+58246341746688 h^4 m^{12} s t^4 
-587731230720 h^3 m^{13} t^5  \\
+3602104581095424 g^2 m^{12} s^6 
-157746980481662976 g h^5 m^9 s^5 
-79828890012352512 g h^4 m^{10} s^4 t
-10700934975848448 g h^3 m^{11} s^3 t^2 \\
-668738492301312 g h^2 m^{12} s^2 t^3
-10448555212800 g h m^{13} s t^4
+275499014400 g m^{14} t^5 
+1321196639636946944 h^{10} m^6 s^4 \\
+814698134331457536 h^9 m^7 s^3 t
+92179893357379584 h^8 m^8 s^2 t^2
+2541749079638016 h^7 m^9 s t^3
-13792092880896 h^6 m^{10} t^4 \\
+58678654946770944 g^2 h^2 m^{10} s^5
+16167862146170880 g^2 h m^{11} s^4 t
+705486447968256 g^2 m^{12} s^3 t^2
-1103687847816200192 g h^7 m^7 s^4 \\
+13931406950400 g h^3 m^{11} t^4 
-44584171418419200 g h^5 m^9 s^2 t^2 
-9685512225 m^{16} t^6
-1132386035171328 g h^4 m^{10} s t^3  \\
+7839053087502237696 h^{12} m^4 s^3 
+1352219532013338624 h^{11} m^5 s^2 t 
+51427969540816896 h^{10} m^6 s t^2
-147941222252544 h^9 m^7 t^3 \\
+356552602772570112 g^2 h^4 m^8 s^4
+65355404946702336 g^2 h^3 m^9 s^3 t
+5201278745444352 g^2 h^2 m^{10} s^2 t^2 
+99067782758400 g^2 h m^{11} s t^3 \\
-3265173504000 g^2 m^{12} t^4
-5301992678571900928 g h^9 m^5 s^3
-984505782412247040 g h^8 m^6 s^2 t
-37440870596739072 g h^7 m^7 s t^2 \\
+260713381625856 g h^6 m^8 t^3
+7163309458867617792 h^{14} m^2 s^2
+495888540219998208 h^{13} m^3 s t
-613682107121664 h^{12} m^4 t^2 \\
-33414364526542848 g^3 h m^9 s^4 
-2441030167166976 g^3 m^{10} s^3 t
+1297818789047435264 g^2 h^6 m^6 s^3
+235088951956733952 g^2 h^5 m^7 s^2 t \\
+8250658482290688 g^2 h^4 m^8 s t^2
-132090377011200 g^2 h^3 m^9 t^3 
-7123133303988682752 g h^{11} m^3 s^2
-506754841838616576 g h^{10} m^4 s t \\
+2079004689432576 g h^9 m^5 t^2
+1846757322198614016 h^{16} s
-126388861612851200 g^3 h^3 m^7 s^3 
-17847573389770752 g^3 h^2 m^8 s^2 t \\
-469654673817600 g^3 h m^9 s t^2
+20639121408000 g^3 m^{10} t^3
+2594242435278176256 g^2 h^8 m^4 s^2
+183620365983940608 g^2 h^7 m^5 s t \\
-1848091141472256 g^2 h^6 m^6 t^2
-2445243491429646336 g h^{13} m s
+5610807836540928 g h^{12} m^2 t
+3143555283419136 g^4 m^8 s^3 \\
-408993036765233152 g^3 h^5 m^5 s^2 
-26702361435045888 g^3 h^4 m^6 s t
+626206231756800 g^3 h^3 m^7 t^2
+1246806603479384064 g^2 h^{10} m^2 s \\
-9737274975584256 g^2 h^9 m^3 t 
+22822562857746432 g^4 h^2 m^6 s^2 
+1113255523123200 g^4 h m^7 s t
-73383542784000 g^4 m^8 t^2 \\
-299841218941026304 g^3 h^7 m^3 s 
+5822326385934336 g^3 h^6 m^4 t
-12824703626379264 g^2 h^{12} 
+32389413531025408 g^4 h^4 m^4 s \\
-1484340697497600 g^4 h^3 m^5 t 
+15199648742375424 g^3 h^9 m
-1055531162664960 g^5 h m^5 s
+139156940390400 g^5 m^6 t \\
-6878544743366656 g^4 h^6 m^2
+1407374883553280 g^5 h^3 m^3
-109951162777600 g^6 m^4.
\end{matrix}
\end{tiny}
$$
Here $m= m_{00}$ and $s,t,h,g$ are the
affine invariants in Example \ref{ex:covv33}  and
 Theorem~\ref{thm:beaquad}.
\end{proposition}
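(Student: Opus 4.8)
The plan mirrors the derivation of Theorem~\ref{thm:beaquad}. Write $\mathcal{N} \subset \PP^9$ for the closure of the image of the map $\PP^2 \times \mathcal{M}_{[4]}(\Delta_2) \dashrightarrow \PP^9$, $\bigl((\alpha:\beta:\gamma),(m_{ij})\bigr) \mapsto (M_{ij})$. First I would confirm that $\mathcal{N}$ is a hypersurface: the source has dimension $2+6 = 8$, so it suffices to check that the differential of the parametrization has rank $8$ at one random point, which a short {\tt Macaulay2} or {\tt HomotopyContinuation.jl} computation confirms (and rules out $\mathcal{N} = \PP^9$). Next, observe that $M_{ij} = \int (\alpha w_1 + \beta w_2 + \gamma)\, w_1^i w_2^j \,{\rm d}\mu_{\Delta_2}$ is exactly the $(i,j)$-th moment of the measure on the triangle $\Delta_2$ whose density is the affine-linear function $\alpha w_1 + \beta w_2 + \gamma$. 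The family of all such affine-density measures on triangles is closed under ${\rm Aff}_2$, since an affine change of coordinates sends a triangle to a triangle and an affine-linear density to an affine-linear density (dividing by the constant Jacobian). The same equivariance argument as in Lemma~\ref{lem:actionOnMomentVarieties} then shows that $\mathcal{N}$ is ${\rm Aff}_2$-invariant, so its irreducible defining polynomial $F$ is a relative ${\rm Aff}_2$-invariant. By Theorem~\ref{thm:covariants} and Proposition~\ref{prop:moliensuccess}, $F$ lies in $\RR[m_{00},s,t,h,g,j]$; as the $y$- and $z$-degrees of $m_{00},s,t,h,g$ are all even while $j$ has odd $y$-degree $9$, the variable $j$ can enter only through $j^2$, and the degree-$(24,18,18)$ relation rewrites $j^2$ in $\RR[m_{00},s,t,h,g]$. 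Hence $F \in \RR[m_{00},s,t,h,g]$.

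Second, I would pin down the multidegree. The ordinary degree of $\mathcal{N}$ is read off by intersecting with a generic line, and the $\ZZ^3$-grading (\ref{eq:finegrading}) refines this to the claimed $({\bf 52},36,36)$; alternatively one extracts the relevant coefficient of the Hilbert series (\ref{eq:hilbertseries}) to learn the dimension of the invariant space in that degree. Writing a monomial as $m_{00}^a s^b t^c h^d g^e$, membership in degree $(52,36,36)$ is the linear Diophantine system $a + d + 2e = 16$ and $2b + 3c + d + 3e = 18$ in nonnegative integers; enumerating its solutions gives the finite list of candidate monomials, and I would check that their count matches the Hilbert-series prediction.

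Third, the reconstruction. Form the template $F = \sum_\mu c_\mu\, \mu$ with indeterminate rational coefficients. Plug in many random rational points of the parametrization — random $(\alpha,\beta,\gamma)$ and random vertices of $\Delta_2$, compute the $m_{ij}$, and evaluate $m_{00},s,t,h,g$ — to get a homogeneous linear system in the $c_\mu$. Solving over $\QQ$ should leave a one-dimensional kernel, which determines $F$ up to scale; clearing denominators and passing to the primitive integral representative yields the displayed polynomial. For an independent exact check I would substitute the rational parametrization of $\PP^2 \times \mathcal{M}_{[4]}(\Delta_2)$ directly into $F$ and verify it reduces to $0$ identically, and separately confirm that $F$ is irreducible, so that it really is the minimal defining equation of $\mathcal{N}$.

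The main obstacle is computational rather than conceptual: the scale and exactness of the linear algebra together with the final certification. One must be sure that $({\bf 52},36,36)$ is the correct degree and that no invariant of smaller degree already cuts out $\mathcal{N}$ — equivalently, that $\mathcal{N}$ is irreducible and is contained in none of the hypersurfaces $\{s=0\},\{h=0\},\{g=0\}$ — and then solve a large system over $\QQ$ and verify the outcome symbolically. A purely floating-point solve would not constitute a proof, so exact sample points and an exact substitution check on the parametrization are essential.
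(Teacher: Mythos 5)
Your proposal is correct and follows essentially the same route as the paper, which states Proposition~\ref{prop:52} as an application of the exact technique used for Theorem~\ref{thm:beaquad}: fix the $\ZZ^3$-degree, write an ansatz in the affine invariants $m_{00},s,t,h,g$ via Proposition~\ref{prop:moliensuccess}, solve the resulting linear system on random sample points, and verify by substitution into the parametrization. Your parity argument for why the covariant $j$ cannot occur (odd $y$-degree $9$ versus the even target degree $36$) is a nice explicit justification of a step the paper leaves implicit.
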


We now return to the hypersurfaces $\mathcal{M}_\lambda(Q)$ 
that encode moments
of the uniform probability distribution on a quadrilateral $Q$.
These also live in $\PP^9$ but they
are not invariant under ${\rm Aff}_3$.
We consider arbitrary partitions $\lambda$ of $10$ and notice that 
their total number  is~$42$.

\begin{remark} \rm
\label{rem:quadrDimension}
For every partition $\lambda$ of $10$, except those in the following table, the moment variety $\mathcal{M}_{\lambda}(Q)$ is a hypersurface in $\mathbb{P}^9$.
The dimensions of the remaining moment varieties coming from partitions of $10$ are as follows.
Here $\lambda^c$ denotes the conjugate partition of $\lambda$.
\begin{center}
\begin{tabular}{ccc}
  \Xhline{2\arrayrulewidth}
  $\lambda$ & $\lambda^c$ & $\dim \mathcal{M}_{\lambda}(Q)$ \\
  \hline
 $10$ & $1^{10}$ & 5 \\
 $9\,1$ & $2\,1^8$ & 6 \\
 $8\,2$ & $2^2\,1^6$ & 7 \\
 $8\,1^2$ & $3\,1^7$ & 7
  \\
  \Xhline{2\arrayrulewidth}
\end{tabular}
\end{center}

In light of Theorem~\ref{th:Kathlen}, we find that all equations for moment varieties in this table arise from projections onto a line.
In particular, adding either $m_{10},m_{11},m_{12}$ or $m_{10},m_{11},m_{20}$ or $m_{10},m_{20},m_{30}$ to the moments $m_{00},m_{01},\ldots,m_{06}$ does not impose any new relations.
The hypersurfaces $\mathcal{M}_{{7\,3}}$, $\mathcal{M}_{{7\,2\,1}}$ and $\mathcal{M}_{{7\,1^3}}$ are all cut out by the same Hankel determinant~\eqref{eq:HankelQuadr}.
\end{remark}

We now come to the census of mixed relations we are interested in.
These are the moment hypersurfaces $\mathcal{M}_\lambda(Q)$ in $\PP^9$ that are not featured in
Remark~\ref{rem:quadrDimension}. One of them is defined by the polynomial
of degree $18$ seen in Theorem~\ref{thm:beaquad}. The other hypersurfaces
are not invariant under ${\rm Aff}_3$. We computed all of them
using numerical algebraic geometry. Here is the result:

\begin{theorem} \label{thm:42partitions}
Table~\ref{tab:quadrDegrees} lists the $\mathbb{Z}^3$-degrees of the moment hypersurfaces $\mathcal{M}_\lambda(Q)$ in $\mathbb{P}^9$, where
$Q$ is a quadrilateral and $\lambda$ is a partition of $10$. 
We also report  the size of the general fiber of the map
$\varphi_\lambda:(\mathbb{C}^2)^4 \to \mathcal{M}_\lambda(Q)$
which sends the vertices of $Q$ to the moments indexed by~$\lambda$. 

\begin{table}[h]
\centering
\begin{tabular}{cccc}
  \Xhline{2\arrayrulewidth}
  $\lambda$ & $\lambda^c$ & $\deg \mathcal{M}_{\lambda}(Q)$ & $\deg \varphi_\lambda$ \\
  \hline
$7\,3$ & $2^3\,1^4$ & $(5,10,0)$ & $144$ \\
$7\,2\,1$ & $3\,2\,1^5$ & $(5,10,0)$ & $144$ \\
$7\,1^2$ & $4\,1^6$ & $(5,10,0)$ & $144$ \\
$6\,4$ & $2^4\,1^2$ & $(27,3,36)$ & $8$ \\
$6\,3\,1$ & $3\,2^2\,1^3$ & $(51,6,54)$ & $8$ \\
$6\,2^2$ & $3^2\,1^4$ & $(96,12,90)$ & $8$ \\
$6\,2\,1^2$ & $4\,2\,1^4$ & $(136,18,126)$ & $8$ \\
$6\,1^4$ & $5\,1^5$ & $(480,72,424)$ & $8$ \\
$5^2$ & $2^5$ & $(33,6,39)$ & $8$ \\
$5\,4\,1$ & $3\,2^3\,1$ & $(36,6,36)$ & $8$ \\
$5\,3\,2$ & $3^2\,2\,1^2$ & $(42,12,36)$ & $8$ \\
$5\,3\,1^2$ & $4\,2^2\,1^2$ & $(60,18,48)$ & $8$ \\
$5\,2^2\,1$ & $4\,3\,1^3$ & $(72,36,42)$ & $8$ \\
$5\,2\,1^3$ & $5 \,2\,1^3$ & $(139,70,72)$ & $8$ \\
$4^2\,2$ & $3^2\,2^2$ & $(42,16,32)$ & $8$ \\
$4^2\,1^2$ & $4\,2^3$ & $(60,24,42)$ & $8$ \\
$4\,3^2$ & $3^3\,1$ & $(47,20,34)$ & $8$ \\
$4\,3\,2\,1$ & $4\,3\,2\,1$ & $(18,12,12)$ & $8$ \\
  \Xhline{2\arrayrulewidth}
\end{tabular}
\caption{Degrees of moment hypersurfaces of quadrilaterals.}
\label{tab:quadrDegrees}
\end{table}
\end{theorem}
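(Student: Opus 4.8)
The plan is to split the $34$ partitions $\lambda \vdash 10$ for which $\mathcal{M}_\lambda(Q)$ is a hypersurface in $\PP^9$ — all partitions of $10$ except the eight appearing in Remark~\ref{rem:quadrDimension} — into two groups. Three rows of Table~\ref{tab:quadrDegrees} (those with $\mathbb{Z}^3$-degree $(5,10,0)$, coming from $7\,3$, $7\,2\,1$, $7\,1^2$ and their conjugates) can be handled symbolically via Theorem~\ref{th:Kathlen}; the remaining $15$ rows I would treat by numerical algebraic geometry, working from the explicit rational parametrization $\varphi_\lambda\colon (\CC^2)^4 \dashrightarrow \PP^9$ obtained by expanding the normalized moment generating function~\eqref{eq:quadformula} and clearing the denominator $\mathrm{vol}(Q)$. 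Throughout, irreducibility of $\mathcal{M}_\lambda(Q)$ is automatic, since it is the closure of the image of the irreducible variety $(\CC^2)^4$.

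For the three ``Hankel'' rows I would argue as follows. For these $\lambda$ the index set $\mathcal{A}_\lambda$ contains the full chain of one-dimensional moments $m_0,m_1,\dots,m_6$ of the pushforward of $\mu_Q$ onto a coordinate axis, and, as recorded in Remark~\ref{rem:quadrDimension}, adjoining the remaining three moments imposes no new relations. Hence the homogeneous prime ideal of $\mathcal{M}_\lambda(Q)$ is generated by the single Hankel determinant~\eqref{eq:HankelQuadr}, i.e.\ by the case $d=2,\,n=4,\,r=6$ of Theorem~\ref{th:Kathlen}. Both the total degree $5$ and the remaining two components of the multidegree are then read off by expanding that $5\times5$ determinant along its antidiagonal products, and the fibre size $144$ of $\varphi_\lambda$ follows from the recovery procedure of~\cite{GLPR}: the line projection is determined by the $2n-d-1=5$ parameters of~\eqref{eq:Aseries}, and the residual finite correspondence reconstructing a quadrilateral from those parameters together with the three extra moments accounts for the remaining factor. (The value $144$ is in any case reconfirmed numerically, as below.)

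For every remaining partition $\lambda$ I would run the following pipeline. First, evaluate the Jacobian of $\varphi_\lambda$ at a random rational quadrilateral and check that its rank is $8$; this certifies that $\mathcal{M}_\lambda(Q)$ is a hypersurface. Second, compute $\deg\varphi_\lambda$ as the cardinality of a generic fibre: pick a base point $\varphi_\lambda(X_0)$, trace monodromy loops in $\PP^9$ until the preimage set stabilizes, and certify completeness with a trace test. Third, compute the ordinary degree of $\mathcal{M}_\lambda(Q)$: pull back the eight linear equations cutting out a generic line $L\subset\PP^9$ along $\varphi_\lambda$, solve the resulting square system by numerical homotopy continuation, and divide the number of solutions by $\deg\varphi_\lambda$; again certify by a trace test. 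Fourth, obtain the other two components of the $\mathbb{Z}^3$-degree $(a,b,c)$. Since $\mathcal{M}_\lambda(Q)$ is stable under the torus $(\CC^\ast)^2$ acting by $m_{ij}\mapsto s^i t^j m_{ij}$, its defining polynomial lies in a single graded piece, and $b$ (resp.\ $c$) equals the number of intersection points of $\mathcal{M}_\lambda(Q)$, in the affine chart $m_{00}=1$, with a generic curve of the form $s\mapsto\bigl(\alpha_{ij}s^{\,i}+\beta_{ij}\bigr)_{(i,j)}$ (resp.\ $t\mapsto\bigl(\alpha_{ij}t^{\,j}+\beta_{ij}\bigr)_{(i,j)}$); equivalently these are the intersection numbers with the three distinguished curve classes on the toric quotient $\PP^9/\!\!/(\CC^\ast)^2$. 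One computes these numbers by exactly the same homotopy-and-trace-test method as in the third step. Carrying this out for each of the $15$ rows reproduces Table~\ref{tab:quadrDegrees}.

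The hard part will be certification at scale. The monodromy step can require many loops before the fibre saturates, and the trace tests must be run with sufficient working precision that the near-singular paths — unavoidable for the largest hypersurfaces, of degree $480$ for $\lambda=6\,1^4$, where several thousand homotopy paths are tracked — do not corrupt the counts; I would cross-check by running \texttt{Bertini}'s a posteriori certification and the solver of~\cite{homotopyCont} in parallel. A second subtlety is ensuring that the curves used in the fourth step are generic \emph{within} the prescribed graded family, so that the numbers they return are genuinely the torus-weight components of the multidegree rather than the ordinary degree recomputed; the row $\lambda=4\,3\,2\,1$, whose multidegree $(18,12,12)$ is also known from invariant theory via Theorem~\ref{thm:beaquad}, furnishes a useful consistency check on the whole procedure.
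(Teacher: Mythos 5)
Your proposal follows essentially the same route as the paper: numerical monodromy plus trace tests to find the generic fiber size and the ordinary degree, and intersections with monomial curves whose coordinate exponents are the indices $i_k$ (resp.\ $j_k$) to extract the remaining two components of the $\ZZ^3$-degree. Your separate symbolic treatment of the three $(5,10,0)$ rows via the Hankel determinant of Theorem~\ref{th:Kathlen} is already implicit in the remark preceding the theorem, so it is a refinement of bookkeeping rather than a different method.
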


\begin{proof}[Derivation and Proof]
This is based on numerical computations. We started out with
\texttt{Bertini} \cite{Bertini}, but then we mainly used the 
 \texttt{Julia} package \texttt{HomotopyContinuation.jl}~\cite{homotopyCont}.
 
Consider the parametrization of the affine cone over the moment hypersurface $\mathcal{M}_\lambda(Q)$ given by $\mathbb{C}^9 \to \mathbb{C}^{10}, (t,X) \mapsto t \cdot \varphi_\lambda(X)$.
Let us first describe how we compute the usual degree of this affine cone in $\CC^{10}$.
We pick a random point on the  cone together with a random line passing through this point. Our goal is to compute all intersection points of the line with the cone. 
We do this via numerical monodromy, i.e. we move the line around and track the already known intersection point. 
When the original line is reached again, we might have found a new solution.
These monodromy loops are executed until no new solutions are found. 
To verify that all solutions have been found, we applied the trace test \cite[\S 10.2.1]{Bertini}.

To compute the other two coordinates
in the $\mathbb{Z}^3$-degree of the moment hypersurface $\mathcal{M}_\lambda(Q)$, we proceed as above, but the line is now replaced by a monomial curve. For the middle coordinate of
the $\ZZ^3$-degree, we use the curve in $\CC^{10}$ with parametric representation
$$
s \mapsto \left( p_1 + s^{i_1} v_1, \; p_2 + s^{i_2} v_2, \;\ldots, \; p_{10} + s^{i_{10}} v_{10} \right).
$$
Here $p$ and $v$ are random vectors in $ \mathbb{C}^{10}$.
 The moments indexed by the partition $\lambda$ appear in the 
 order $m_{i_1,j_1}, m_{i_2,j_2}, \ldots, m_{i_{10},j_{10}}$.
Analogously, for the last entry in the $\mathbb{Z}^3$-degree, we use the
monomial curve in $\CC^{10}$ parametrized by 
$s \mapsto \left( p_1 + s^{j_1} v_1,  \; p_2 + s^{j_2} v_2,  \ldots, p_{10} + s^{j_{10}} v_{10} \right)$.

In each case, we solve a square system of $10$ polynomial
equations in $10$ unknowns $s,t,x_{11},\ldots,x_{42}$.
The number of solutions is the desired degree in $\CC^{10}$
times the degree of the map $\varphi_{\lambda}$. For instance,
the number of solutions $(s,t,x_{11},\ldots,x_{42})$
 for $\lambda = (4,3,2,1)$ equals $144$.
The solutions  form $18$ clusters of size $8$, where each cluster
consists of all solutions   that map to the
same point on the affine cone. This is how the degree $18$
was first determined. It allowed us to make the ansatz 
that eventually led to the invariant in Theorem~\ref{thm:beaquad}.
\end{proof}

The use of invariant theory of the affine group ${\rm Aff}_d$
was essential for computing the moment hypersurfaces 
in Theorem~\ref{thm:beaquad} and Proposition~\ref{prop:52}.
However this method does not directly apply to moment varieties of codimension two or more.
For such moment varieties, the minimal generators of the ideal
form an invariant vector space, but the individual generators
are not invariants. In such a situation, one might employ
 representation theory of ${\rm Aff}_d$. We shall demonstrate this for the 
 moment variety $\mathcal{M}_{[3]}(\Delta_3)$ in Conjecture \ref{conj:cummom}.

\begin{proposition} \label{prop:tetra}
The $\mathrm{Aff}_3$-module $V$ spanned by the $90$ quintics 
that vanish on $\mathcal{M}_{[3]}(\Delta_3) $ in $ \PP^{19}$
is the direct sum of two indecomposable $\mathrm{Aff}_3$-modules
 $V_1$ and $V_2$, each of dimension~$45$.
As a $\mathrm{GL}_3$-module, $V$ decomposes into $12$ irreducibles:
$V_1$ and $V_2$ split into six irreducible $\mathrm{GL}_3$-modules each. 
Table~\ref{tab:GL3modules} lists
the highest weights of these $\mathrm{GL}_3$-modules and their dimensions.

\begin{table}[h]
\centering
\begin{tabular}{?c|cccccc?}
  $V_1$ & $(3,3,4)$ & $ (3,4,4)$ & $ (2,4,4)$ & $ (2,3,4)$ & $ (1,4,4) $ & $ (1,3,4)$ \\
 $V_2 $ & $ (2,2,3)$ & $ (2,3,3)$ & $ (2,2,4)$ & $ (2,3,4)$ & $ (2,2,5)$ & $ (2,3,5)$ \\
 $\dim$ & $3$ & $3$ & $6$ & $8$ & $10$ & $15$
\end{tabular}
\caption{Decomposition of the $\mathrm{Aff}_3$-modules $V_1$ and $V_2$ into irreducible $\mathrm{GL}_3$-modules.}
\label{tab:GL3modules}
\end{table}
\end{proposition}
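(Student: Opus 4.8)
The plan is to start from the explicit list of $90$ quintic generators of the prime ideal of $\mathcal{M}_{[3]}(\Delta_3)$ and to decompose the $\mathrm{Aff}_3$-module $V$ they span by combining symbolic linear algebra with highest-weight theory. First I would fix the $\ZZ^4$-grading $\deg(m_{ijl}) = (1,i,j,l)$ from (\ref{eq:finegrading}): since each quintic has total degree $5$ in the $m_I$ and the action of $\mathrm{Aff}_3$ can only lower the $(i,j,l)$-degree, the module $V$ is filtered by this grading, and the $\mathrm{GL}_3$-module structure (which preserves the $(i,j,l)$-degree) is already visible on the associated graded. So the first concrete step is to sort the $90$ quintics by their $\ZZ^3$-multidegree $(q_1,q_2,q_3) = $ (the common $(i,j,l)$-degree, which is constant on each $\mathrm{GL}_3$-irreducible summand by the lemma preceding Example~\ref{ex:covv33}) and count dimensions in each multidegree; this should already exhibit the six multidegrees in each row of Table~\ref{tab:GL3modules} together with the claimed dimensions $3,3,6,8,10,15$, whose sum is $45 = \tfrac{1}{2}\cdot 90$.

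Next I would identify each multigraded piece as an irreducible $\mathrm{GL}_3$-module. The cleanest way is to compute, for the relevant weight space, the action of the raising operators $E_{12}, E_{23}$ of $\mathfrak{gl}_3$ (obtained by differentiating the action (\ref{eq:affmom}) restricted to unipotent one-parameter subgroups of $\mathrm{GL}_3 \subset \mathrm{Aff}_3$) and locate highest-weight vectors, i.e. polynomials in $V$ killed by both raising operators. Their weights — read off as the $\ZZ^3$-multidegree — should be exactly the twelve triples listed in the table; one then checks that the dimension of the irreducible $\mathrm{GL}_3$-module with that highest weight (by the Weyl dimension formula for $\mathfrak{gl}_3$) matches the tabulated dimension, and that these twelve irreducibles account for all $90$ dimensions. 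The fact that exactly six highest weights appear twice (the pair $(2,3,4)$ is the only weight shared between $V_1$ and $V_2$, and indeed Table~\ref{tab:GL3modules} shows it in both rows) already forces the decomposition of $V$ as a $\mathrm{GL}_3$-module to be multiplicity-free except at weight $(2,3,4)$, which is the subtle point.

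The heart of the statement is the finer claim about the \emph{affine} module structure: that the extra translation generators $E_{i4}$ (lowering $(i,j,l)$-degree by a unit vector) glue the six $\mathrm{GL}_3$-irreducibles of each row into a single \emph{indecomposable} $\mathrm{Aff}_3$-module, and that $V = V_1 \oplus V_2$ is the decomposition into indecomposables. To prove indecomposability of $V_1$ I would exhibit the action of the translations explicitly on highest-weight vectors and show that the resulting directed graph on the six summands is connected — e.g. that repeatedly applying translation operators to a generator in the top piece $(3,3,4)$ (resp.~$(2,2,3)$) reaches generators in all the other five pieces of $V_1$ (resp.~$V_2$), so no proper nonzero $\mathrm{Aff}_3$-submodule can be complemented; equivalently, $\mathrm{End}_{\mathrm{Aff}_3}(V_i)$ is local. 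That $V_1$ and $V_2$ are \emph{separate} summands (no translation operator maps a generator of $V_1$ into $V_2$ or vice versa, modulo lower terms) is what makes $V = V_1 \oplus V_2$; this is the only place where the shared weight $(2,3,4)$ could cause trouble, and one must check that the two copies of the $(2,3,4)$-irreducible really are not linked by translations. I expect this last verification — disentangling the two copies of the $(2,3,4)$-module and confirming the block-diagonal (rather than block-triangular) structure across $V_1$ and $V_2$ — to be the main obstacle, and it is most safely done by an explicit machine computation: tabulate the translation operators as rational matrices on the $90$-dimensional space $V$ in the multigraded basis, and read off the two invariant $45$-dimensional blocks and their indecomposability from the structure of the resulting $\mathrm{Aff}_3$-action.
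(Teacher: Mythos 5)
Your overall strategy --- locate highest weight vectors as the kernel of the raising operators, match dimensions via the Weyl dimension formula, and then track the translation operators to see how the twelve $\mathrm{GL}_3$-irreducibles merge into two clusters, with special care for the doubled weight $(2,3,4)$ --- is essentially the route the paper takes: the paper phrases the highest-weight computation as finding $U_3$-invariants inside each $\ZZ^4$-graded piece of $V$, and it resolves the $(2,3,4)$ ambiguity by showing there is a unique way to split the two-dimensional space of highest weight vectors of that weight compatibly with the two clusters. Like the paper, you correctly identify that the whole argument rests on explicit machine computation of the $\mathrm{Aff}_3$-action on the $90$-dimensional space.

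However, your first step contains a genuine error. The $\ZZ^3$-multidegree $(i,j,l)$ is a weight for the diagonal torus of $\mathrm{GL}_3$; it is \emph{not} constant on a $\mathrm{GL}_3$-irreducible summand, and $\mathrm{GL}_3$ does not preserve it (only the total order $i+j+l$, together with the degree in the $m_I$, is preserved). The lemma you invoke concerns covariants of order $0$, i.e.\ one-dimensional representations on which the torus acts by a power of the determinant, and does not apply to the nontrivial irreducibles appearing here. Sorting the $90$ quintics by multidegree therefore yields the \emph{weight-space} decomposition --- $43$ pieces of dimensions $1$, $2$, $4$, $6$, exactly as the paper records --- and not twelve pieces of dimensions $3,3,6,8,10,15$: for instance, the $3$-dimensional irreducible with highest weight $(2,2,3)$ is spread over the three weight spaces $(2,2,3)$, $(2,3,2)$, $(3,2,2)$. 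This does not sink the proof, because your second step (raising operators, highest weight vectors, Weyl dimension formula) is the correct mechanism and does not depend on the faulty first step; but the claim that the multidegree count ``already exhibits'' the table must be deleted. One further small imprecision: for indecomposability of $V_i$ you need the \emph{undirected} translation graph on the six summands to be connected (the directed graph cannot be strongly connected, since translations strictly lower the order), combined with the observation that each $V_i$ is multiplicity-free as a $\mathrm{GL}_3$-module, so that every $\mathrm{Aff}_3$-submodule, and hence every direct summand, is a sum of a subset of the six irreducibles.
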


\begin{proof}
The \emph{weight} of a polynomial is given by its $\mathbb{Z}^{4}$-grading.
Each \emph{isotypical component} of $V$ as a $\mathrm{GL}_3$-module is spanned by all polynomials in $V$ having the same fixed $\mathbb{Z}^4$-degree.
This isotypical decomposition consists of $43$ vector spaces with dimensions $1$, $2$, $4$ or $6$.

For each isotypical component, we computed its $U_3$-invariant polynomials, where $U_3 \subset \mathrm{GL}_3$ is the subgroup of upper triangular matrices with diagonal $(1,1,1)$.
Ten isotypical components contain exactly one $U_3$-invariant (up to scaling),
while the component with weight $(2,3,4)$ has a two-dimensional subspace of $U_3$-invariant polynomials; see Table~\ref{tab:GL3modules}.
Each  $U_3$-invariant  generates an irreducible $\mathrm{GL}_3$-module.
Ten of these irreducible modules in $V$ are unique. 
The two irreducible $\mathrm{GL}_3$-modules with highest weight $(2,3,4)$ are not unique.

Finally, we studied which of the described irreducible $\mathrm{GL}_3$-modules merge when we add translation, i.e. when we act on $V$ by the whole affine group $\mathrm{Aff}_3$.
The ten unique $\mathrm{GL}_3$-modules get merged into two clusters, as seen in Table~\ref{tab:GL3modules}.
Moreover, there is a unique way of choosing two $\mathrm{GL}_3$-modules with highest weight $(2,3,4)$ such that acting with the affine group on one of these modules stays within one of the two clusters in Table~\ref{tab:GL3modules}.
\end{proof}

\section{Outlook}
\label{sec7}

Moment varieties furnish an algebro-geometric representation for 
various probability measures on $\RR^d$. In this
article we focused on measures that are associated with convex polytopes.
We were able to determine their moment varieties for a range
of interesting cases. However, this is just the beginning.
Many questions remain open, and we see considerable
potential for further developing our algebraic tools,
so that they become  practical for inverse problems.

This section discusses a number of open problems and directions
for future research. It also offers a perspective on some
aspects of moment varieties not discussed in Sections \ref{sec2}--\ref{sec6}.

\medskip
\noindent 
{\bf Adjoints and Wachspress Varieties.} 
%
At the end of Section \ref{sec2} we defined the adjoint moment 
variety $\,\mathcal{M}_{\rm Ad}(\cP)\,$ for a given combinatorial type $\cP$, 
but we did not state any results on this topic.
The variety $\mathcal{M}_{\rm Ad}(\cP)$ is the moduli space
for the Wachspress varieties of the polytopes  in the class $\cP$.
The study of Wachspress varieties and their moduli
is a promising direction at the interface of geometric
combinatorics  and algebraic geometry (see~\cite{KR}). It extends the familiar repertoire
of toric varieties.

A concrete open problem is to compute the
adjoint moment variety $\mathcal{M}_{\rm Ad}(\cP)$
in the smallest cases where 
the ambient dimension $\binom{n-1}{d}-1$
exceeds the number $nd$ of parameters.
This happens for polytopes 
with $n=8$ vertices
in dimensions $d=2,3,4$.
Another interesting case is $d=2$ and $n=7$.
Here the adjoint is a plane curve of degree $4$,
so it has $14$ parameters. It is parametrized
by the $14$ vertex coordinates of a heptagon.
What is the degree of this map?  It would be
worthwhile to study  the geometry of this map,
in light of the beautiful classical
connections  \cite[\S 6.3.3]{Dolgachev}
between genus $3$ curves and
del~Pezzo surfaces of degree $2$.

\medskip\noindent 
{\bf Step Functions.}
  It can be shown that mixtures
 of uniform distributions of line segments are
 algebraically identifiable whenever this is permitted
 by the parameter count.
 To be precise, the delicate algebro-geometric proof for
  mixtures of univariate Gaussians that is
 given in \cite[Section 2]{ARS} can be transferred to mixtures of
 line segments. The point of departure for this transfer argument is the proof of
\cite[Lemma 4]{ARS} which holds verbatim for the matrix in (\ref{eq:3rows}).

This opens the door to moment varieties of distributions whose
density is a  step function on the line $\RR^1$.
Indeed, each step function is a mixture of uniform distributions on
line segments. Since mixture models correspond to secant varieties in $\PP^r$,
we can phrase our question as follows: study the secant varieties of the
surfaces $\mathcal{M}_{\{\!\{ r \}\!\}}(1,2)$ in Example \ref{ex:einszwei}.
Pearson's hypersurface of degree $39$ in \cite[Theorem 1]{AFS}
suggests that this will not be easy.

\medskip\noindent 
{\bf Recovery Algorithms.}  Theorem~\ref{th:Kathlen} characterizes all relations among axial moments  of a polytope $P$ for any fixed axis. From this one can recover
  the projections of all vertices of $P$ onto that axis.
  Different variations of this result are known in the literature; see e.g.~\cite{GLPR}. 
  On the other hand, in order to uniquely recover  a polytope $P$ in $ \RR^d$  using 
  axial  moments, one has  to know the projections of its vertices on at least $d+1$ different lines in $\RR^d$. The moments on $d+1$ lines are highly dependent.
  For instance, for $d=2$ and $P$ a quadrilateral, the  $\lambda = 6 \, 1^4$ entry 
  in Table \ref{tab:quadrDegrees} reveals a relation of degree $480$ among moments on two axes.
Understanding such dependencies among the axial moments for general polytopes
seems difficult, but it is an important step towards developing more
advanced recovery algorithms.
This issue is related to multidimensional variants of Prony's method.
Indeed, the Hankel matrix \eqref{eq:hankelmatrix} which connects
polytopal densities and its node points on $\RR^1$ 
with the axial moments is analogous to that for  the classical Prony system \cite{GSY}.
Extending known results about the Prony system to our setting
in $\RR^d$ may lead to applications in signal processing.

\medskip\noindent 
{\bf Multisymmetric Functions.}
Let $\RR[X]$ denote the ring of polynomials in the entries
of an $n \times d$ matrix of unknowns $X = (x_{kl})$. The symmetric group $S_n$
acts on $\RR[X]$ by permuting the rows of $X$.
Following Dalbec \cite{Dal}, we write $\Lambda_{d,n} = \RR[X]^{S_n}$
for the ring of invariants under this action. In words, $\Lambda_{d,n}$ is the
ring of multisymmetric functions for $n$ vectors in $d$-space.

The case $n=d+1$ appeared in Section~\ref{sec4}. 
 Proposition~\ref{prop:explicitmI} and Corollary~\ref{cor:cumu} imply
 that the moments of simplices in $\RR^d$ generate the ring $\Lambda_{d,d+1}$.
 Indeed, the moments and the cumulants generate the same algebra, and
 the  cumulants coincide with the power sum multisymmetric polynomials.
By \cite[Theorem 1.2]{Dal}, the latter are known to generate $\Lambda_{d,n}$ for any $n$.
Furthermore, our Proposition \ref{prop:eachcumu} is closely related to the
well-known fact  (cf.~\cite[Theorem 1.3]{Dal})
that elementary multisymmetric polynomials also generate the algebra $\Lambda_{d,n}$.
  
The discussion at the end of Section \ref{sec4} shows that, for any $n > d$,
the ring $\Lambda_{d,n}  $ arises from our polytopal measures.
Namely, consider the projection of an $(n-1)$-simplex to a subspace $\RR^d$.
Suppose that  the image is a $d$-polytope with $n$ vertices.
The moments of the induced polytopal measure are multisymmetric polynomials in
$\Lambda_{d,n}$, and, in fact, these moments generate the invariant ring $\Lambda_{d,n}$.
Therefore we obtain all possible rings of multisymmetric polynomials as  special cases of the rings of moments of simplices and their projections.  It is known in algebraic combinatorics
that these rings are quite complicated, see e.g. \cite{Hai}. 

\medskip\noindent 
{\bf Symmetry and Invariants.}  We demonstrated in Section~\ref{sec5} that
invariants of the affine group can be determined from covariants of the
general linear group, and this was used in Section~\ref{sec6} to give explicit
formulas for two specific moment hypersurfaces in $\PP^9$. In the case
of moment varieties of codimension $\geq 2$, we do not really know 
how to take advantage of symmetries arising from the affine group ${\rm Aff}_d$.
It would be desirable to understand this.

\medskip\noindent 
{\bf More Hypersurfaces.} 
In Theorem \ref{thm:42partitions} we determined many
moment hypersurfaces of quadrilaterals in $\PP^9$, one for each
partition $\lambda$ of the integer $10$. Our computations were based
on methods from numerical algebraic geometry. One could
try to push this further, either to pentagons ($d=2,n=5$)
or to tetrahedra $(d=3,n=4)$. In the former case
we would aim for moment hypersurfaces in $\PP^{11}$ associated
with partitions of $12$, and in the
latter case we would seek  moment hypersurfaces in $\PP^{13}$
associated with plane partitions of $14$. The remark after
Proposition  \ref{prop:moliensuccess} suggests the following problem
for numerical algebraic geometry:
compute the degrees of the moment hypersurfaces
$\,\mathcal{M}_{[6]}(\text{$13$-gon}) \subset \PP^{27}\,$ 
and $\,\mathcal{M}_{[3]}({\rm octahedron} )\subset \PP^{19}$.
 
\medskip\noindent 
{\bf Special Subvarieties.}
It would be interesting to study the singular loci of moment varieties
as well as the subvarieties whose points correspond to degenerate geometric configurations. 
This was discussed for the cubic surface in Figure \ref{fig:M13} but we never returned to that topic.
 
\medskip\noindent 
{\bf Moment Rings of Polytopes.}  Fix a combinatorial type $\mathcal{P}$ of
simplicial polytopes in $\RR^d$. We define  the moment ring $\fM_\cP$
to be the subalgebra of the rational function field $\RR(X)$ that is generated
by the moments $m_I(X)$ for $\mathcal{P}$ where $I$ runs over  $\NN^d$.
We can realize $\fM_\cP$ as the subalgebra of the polynomial ring $\RR[X]$,
 generated by the numerators $m_I(X) \cdot {\rm vol}(X)$.
These products are polynomials in the $nd$ unknowns $x_{kl}$ 
by Theorem~\ref{thm:nmgf_polytope}. 
If $P$ is the $d$-simplex  then the moment ring $\fM_\cP$
is the ring $\Lambda_{d,d+1}$ of multisymmetric polynomials, as discussed above.
A priori, it is not even clear that $\fM_\cP$ is a Noetherian ring. However,
we strongly believe this. In other words, we conjecture
that $\fM_\cP$ is finitely generated. It would be very interesting to
identify explicit generators, or, at least, to find
degree bounds for the generators of $\fM_\cP$.
The same question makes sense for the moment rings
 that are analogous to $\Lambda_{d,n}$ for $n > d+1$.
To be specific, we seek the subalgebra of $\RR(X)$ that is
generated by  all moments  of univariate polytopal measures of type $(d,n)$. 
A natural place to start is the case of the convex $n$-gon in the plane.
Here we might take advantage of the dihedral group acting
on the $n$ vertices.
In the case of the ring generated by all harmonic moments of plane polygons such study was carried out in \cite{BuFrSh}.

The group of symmetries of the combinatorial type $\cP$ acts on its moment ring 
$\fM_\cP$. This explains why the moment ring of a simplex consists of multisymmetric functions and why the dihedral group acts on the moment rings of $n$-gons. Of course,
there are many other types of simplicial polytopes with interesting symmetry groups.
  How about the octahedron?

\bigskip \bigskip \bigskip \medskip

\paragraph{Acknowledgments.}
We thank Jan Draisma, Frank Grosshans and Hanspeter Kraft  for 
communications on invariant theory.
We are grateful to Taylor Brysiewicz, Paul Breiding and Sascha Timme for
helping us with our experiments using numerical algebraic geometry.
Kathl\'en Kohn and Boris Shapiro are grateful to the MPI MIS in Leipzig 
for the hospitality in June 2018 where this project was initiated.
Bernd Sturmfels also acknowledges partial support
from the Einstein Foundation Berlin and the US National Science Foundation.

\bigskip
\bigskip

\begin{small}

\end{small}

\bigskip \bigskip
\bigskip

\noindent
\footnotesize {\bf Authors' addresses:}

\smallskip 

\noindent Kathl\'en Kohn,
ICERM, Brown University
\hfill {\tt kathlen.korn@gmail.com}

\noindent
Boris Shapiro, Stockholm University
\hfill {\tt shapiro@math.su.se}

\noindent Bernd Sturmfels,
 \  MPI-MiS Leipzig and
UC  Berkeley \hfill  {\tt bernd@mis.mpg.de}


\begin{thebibliography}{10}

\setlength{\itemsep}{-0.4mm}

\bibitem{AFS} C.~Am\'endola, J.-C.~Faug\`ere and B.~Sturmfels:
{\em Moment varieties of Gaussian mixtures},
  Journal of Algebraic Statistics {\bf 7} (2016) 14-28. 
  
\bibitem{ARS} C.~Am\'endola, K.~Ranestad and B.~Sturmfels:
{\em Algebraic identifiability of Gaussian mixtures},
  International Mathematics Research Notices 
  {\bf 21} (2018) 6556--6580. 
  

\bibitem{BBDKV} V.~Baldoni, N.~Berline, J.~De Loera, M.~K\"oppe
and M.~Vergne: {\em How to integrate a polynomial over a simplex},
Mathematics of Computation {\bf 80} (2011) 297--325.

\bibitem{Bertini}
D.~Bates, J.~Hauenstein, A.~Sommese and C.~Wampler:
{\em Numerically Solving Polynomial Systems with Bertini}, Software, Environments, and Tools,
SIAM, Philadelphia, 2013.

\bibitem{homotopyCont}
P.~Breiding and S.~Timme:
{\em HomotopyContinuation.jl: A package for homotopy continuation in Julia},
Lecture Notes in Computer Science \textbf{10931} (2018) 458--465;
software   available at 
\texttt{www.juliahomotopycontinuation.org}.

\bibitem{BroSt} M.~Brodsky and V.~Strakhov: {\em On the uniqueness of the 
inverse logarithmic potential problem},
 SIAM Journal on Applied Mathematics {\bf 46} (1986) 324--344.
 
\bibitem{BuFrSh} Yu.~Burman, R.~Fr\"oberg, B.~Shapiro: \emph{Algebraic relations among harmonic and anti-harmonic moments of plane polygons}, IMRN, https://doi.org/10.1093/imrn/rnz394, January 2020.  
 
\bibitem{CCMRZ}
C.~Ciliberto, M.A.~Cueto, M.~Mella, K.~Ranestad and P.~Zwiernik:
 {\em Cremona linearizations of some classical varieties}, 
 in G.~Casnati et al.~(eds.): From Classical to Modern Algebraic Geometry: Corrado Segre's 
 Mastership and Legacy,  Birkh\"auser Verlag, 2017.
 
 \bibitem{Conca}
 A.~Conca: {\em Straightening law and powers of determinantal ideals of Hankel matrices},
  Advances in Mathematics~{\bf 138} (1998) 263--292.
  
 \bibitem{CuSch} H.~Curry and I.~Schoenberg: {\em On P\'olya frequency functions. IV. 
  The fundamental spline functions and their limits}, 
  Journal  d'Analyse Math\'ematique~{\bf 17} (1966) 71--107. 

\bibitem{DaMi} W.~Dahmen and C.~Micchelli: {\em
Recent progress in multivariate splines}, Approximation theory, IV (College Station, Tex., 1983), 
27--121, Academic Press, New York, 1983. 

\bibitem{Dal} J.~Dalbec: {\em Multisymmetric functions}, Beitr\"age zur Algebra und Geometrie
{\bf 40} (1999) 27--51.

\bibitem{DCoPro}
C.~De Concini and C.~Procesi:
{\em  Topics in Hyperplane Arrangements, Polytopes and Box-Splines},
Universitext, Springer, New York, 2011. 

\bibitem{Dolgachev} I.~Dolgachev: {\em Classical Algebraic Geometry: A Modern View},
Cambridge Univ.~Press, 2012.
                                                                    
\bibitem{DSS} M.~Drton,  B.~Sturmfels and S.~Sullivant:                         
{\em Lectures on Algebraic Statistics}, Oberwolfach Seminars, {\bf 39}, Birkh\"auser Verlag, Basel, 2009.   

\bibitem{Eis} D.~Eisenbud:
{\em Linear sections of determinantal varieties},
American Journal of Mathematics {\bf 110} (1988) 541-575.

\bibitem{GSY} G.~Goldman, Y.~Salman and Y.~Yomdin: {\em Geometry and singularities of Prony varieties}, Methods Appl. Anal. \textbf{25} (2018), no. 3, 257--275.

 \bibitem{GLPR} N.~Gravin, J.~Lasserre, D.~Pasechnik and S.~Robins:
 {\em The inverse moment problem for convex polytopes},
 Discrete and Computational~Geometry~{\bf 48} (2012) 596--621.

\bibitem{GPSS} N.~Gravin, D.~Pasechnik, B.~Shapiro and M.~Shapiro: 
{\em On moments of a polytope}, Analysis and Mathematical Physics {\bf 8} (2018) 255--287.

\bibitem{M2} D.~Grayson and M.~Stillman:                                        
{\em Macaulay2, a software system for research in algebraic geometry},          
available at {\tt www.math.uiuc.edu/Macaulay2/}.

\bibitem{grosshans} F.D.~Grosshans:
{\em Algebraic Homogenous Spaces and Invariant Theory},
Springer, 2006.

\bibitem{Guan}
Y.~Guan: {\em Brill's equations as a  GL(V)-module},
Linear Algebra Appl.~{\bf 548} (2018) 273--292. 

\bibitem{Gu} B.~Gustafsson: {\em On mother bodies of convex polyhedra},  
SIAM J.~Math.~Anal. {\bf 29} (1988) 1106--1117.

\bibitem{Hai} M.~Haiman, {\em Conjectures on the quotient ring by diagonal invariants}, 
J.~Algebraic Combin.~{\bf 3} (1994) 17--76.

\bibitem{IS} C.~Irving and H.~Schenck:
{\em Geometry of Wachspress surfaces},
Algebra Number Theory {\bf 8} (2014) 369--396. 


\bibitem{KR} K.~Kohn and K.~Ranestad: {\em Projective geometry of Wachspress coordinates}, to appear in Foundations of Computational Mathematics, {\tt arXiv:1904.02123}.

\bibitem{LO} J.M.~Landsberg and G.~Ottaviani:
 {\em Equations for secant varieties of Veronese and other varieties},
 Ann.~Mat.~Pura Appl.~{\bf 192} (2013) 569--606.

 \bibitem{Nam} L.D.~Nam: {\em The determinantal ideals of
 extended Hankel matrices}, J.~Pure Appl.~Algebra {\bf 215} (2011) 1502--1515.
 
 \bibitem {Nov} P.~S.~Novikov:
 {\em On the uniqueness of the solution of the inverse potential problem}, Doklady AN SSSR
  {\bf 18} (1938) 165--168. 

 \bibitem{PaS} D.~Pasechnik and B.~Shapiro:
 {\em On polygonal measures with vanishing harmonic moments},
  Journal  d'Analyse Math\'ematique {\bf 123} (2014)  281--301.  
 
 \bibitem{Schm} K.~Schm\"udgen: {\em The Moment Problem},
 Graduate Texts in Mathematics, {\bf 277},
 Springer-Verlag, New York, 2017.
 
 \bibitem{ShMu} E. Sharon and D. Mumford:
 {\em 2d-shape analysis using conformal mapping}, 
 Computer Vision and Pattern Recognition, 2004. 
 Proceedings of the 2004 IEEE Computer Society Conference (27 June--2 July 2004), 
 vol 2, 350--357.
 
\bibitem{Warren1} J.~Warren:
{\em Barycentric coordinates for convex polytopes},
Adv.~Comput.~Math.~{\bf 6} (1996) 97--108.

\bibitem{Warren2} J.~Warren:
{\em On the uniqueness of barycentric coordinates},
 Topics in algebraic geometry and geometric modeling, 93--99, 
 Contemp. Math., 334, Amer. Math. Soc., Providence, RI, 2003.
 
 \bibitem{Ziegler} G.~Ziegler: {\em Lectures on Polytopes},
  Graduate Texts in Mathematics, {\bf 152}, Springer-Verlag, New York, 1995.
  
\end{thebibliography}
\end{document}